\numberwithin{equation}{section}
\newtheorem{theorem}{Theorem}[section]
\newtheorem{lemma}[theorem]{Lemma}
\newtheorem{proposition}[theorem]{Proposition}
\theoremstyle{definition}
\newtheorem{remark}[theorem]{Remark}
\theoremstyle{definition}
\newtheorem{definition}[theorem]{Definition}
\theoremstyle{definition}
\newtheorem{assumption}[theorem]{Assumption}
\def\dashint{\operatorname%
{\,\,\text{\bf--}\kern-.98em\DOTSI\intop\ilimits@\!\!}}
\def\bH{\mathbb{H}}
\def\bN{\mathbb{N}}
\def\bR{\mathbb{R}}
\def\cD{\mathcal{D}}
\def\cF{\mathcal{F}}
\def\cG{\mathcal{G}}
\def\cH{\mathcal{H}}
\def\cM{\mathcal{M}}
\def\cN{\mathcal{N}}
\def\cS{\mathcal{S}}
\def\cU{\mathcal{U}}
\begin{document}
\title[Time fractional parabolic equations]{Time fractional parabolic equations with partially SMO coefficients}

\author[H. Dong]{Hongjie Dong}
\address[H. Dong]{Division of Applied Mathematics, Brown University, 182 George Street, Providence, RI 02912, USA}

\email{\href{mailto:Hongjie\_Dong@brown.edu}{\nolinkurl{Hongjie\_Dong@brown.edu}}}

\thanks{H. Dong was partially supported by the NSF under agreement DMS-2055244,
the Simons Foundation, grant \# 709545, and a Simons Fellowship.}

\author[D. Kim]{Doyoon Kim}
\address[D. Kim]{Department of Mathematics, Korea University, 145 Anam-ro, Seongbuk-gu, Seoul, 02841, Republic of Korea}

\email{\href{mailto:doyoon_kim@korea.ac.kr}{\nolinkurl{doyoon_kim@korea.ac.kr}}}

\thanks{D. Kim were supported by the National Research Foundation of Korea (NRF) grant funded by the Korea government (MSIT) (2019R1A2C1084683).}

\subjclass[2010]{35R11, 26A33, 35R05}

\keywords{parabolic equations, time fractional derivative, measurable coefficients, small mean oscillations, Muckenhoupt weights}

\begin{abstract}
We present the unique solvability in Sobolev spaces of time fractional parabolic equations in divergence and non-divergence forms.
The leading coefficients are merely measurable in $(t,x_1)$ for $a^{ij}$, $1 \leq i,j \leq d$, $(i,j) \neq (1,1)$.
The coefficient $a^{11}$ is merely measurable locally either in $t$ or $x_1$.
As functions of the remaining variables, the coefficients have small mean oscillations.
We consider mixed norm Sobolev spaces with Muckenhoupt weights. Our results generalize previous work on parabolic equations with time fractional derivatives to a much larger class of coefficients and solution spaces.
\end{abstract}

\maketitle

\section{Introduction}

We consider parabolic equations in divergence and non-divergence forms with time fractional derivatives.
The equations are of the form
\[
-\partial_t^\alpha u + D_i(a^{ij}D_j u + a^i u) + b^i D_i u + c u = D_i g_i + f
\]
and
\[
-\partial_t^\alpha u + a^{ij} D_{ij} u + b^i D_i u + c u =f
\]
in $(0,T) \times \bR^d =: \bR^d_T$, where  $\partial_t^\alpha u$ is the Caputo fractional time derivative of order $\alpha \in (0,1)$ defined by
\[
\partial_t^\alpha u(t,x) = \frac{1}{\Gamma(1-\alpha)} \frac{d}{dt} \int_0^t (t-s)^{-\alpha} \left[ u(s,x) - u(0,x) \right] \, ds
\]
for sufficiently smooth $u(t,x)$.

In our previous paper \cite{MR4345837}, it was proved that there exist unique solutions to the above equations when $g_i, f \in L_p(\bR^d_T)$ with $p \in (1,\infty)$ and $b^i = c = 0$.
The solutions in \cite{MR4345837} are such that $u, Du \in L_p(\bR^d_T)$ and $\partial_t^\alpha u \in \bH_p^{-1}(\bR^d_T)$ (see Section \ref{sec02} for the definition of $\bH_p^{-1}$) for equations in divergence form and $u, Du, D^2u, \partial_t^\alpha u \in L_p(\bR^d_T)$ for equations in non-divergence form with zero initial conditions.
The novelty of the paper \cite{MR4345837} is that the coefficient matrix $\{a^{ij}\}_{i,j=1,\ldots,d}$ is allowed to have no regularity assumptions as a function of the temporal and one spatial variable except one of the diagonal coefficients.
Since the class of coefficients in this paper is the very generalization of \cite{MR4345837}, let us give a more detailed description of the coefficients there.
The coefficients $a^{ij}$ are functions of $(t,x_1) \in \bR \times \bR$ without any regularity assumptions (i.e., merely measurable), only satisfying the ellipticity and boundedness condition (see \eqref{eq0310_01}) for all $i,j=1,\ldots,d$.
The coefficient $a^{11}$ has a restriction that it needs to be either $a^{11} = a^{11}(t)$ or $a^{11} = a^{11}(x_1)$ with no regularity assumptions.
Note that, in the parabolic case with the usual time derivative $u_t$, if $a^{ij}(t,x_1)$ have no regularity assumptions for all $i,j=1,\ldots,d$, there is no unique solvability of parabolic equations in Sobolev spaces for $p \in (1,3/2)$ or $p \in (3,\infty)$ even when $d=1$.
See \cite{MR3488249} for a counterexample.
Thus, the coefficients in \cite{MR4345837} are optimal in the sense that the aforementioned restriction on $a^{11}$ cannot be removed.

In this paper, we generalize previous results on parabolic equations with time fractional derivatives to a much larger class of coefficients and solution spaces.
As to solution spaces, we consider weighted Sobolev spaces with mixed norms.
See \eqref{eq0211_02}.
The weights are of the form $w(t,x) = w_1(t)w_2(x)$, where $w_1(t) \in A_p(\bR)$ and $w_2 (x) \in A_q(\bR^d)$.
Here, $A_p(\cdot)$ means a collection of Muckenhoupt weights.
See \eqref{eq0211_03}.
Such weighted Sobolev spaces are also considered in \cite{MR4186022} for non-divergence form equations, but the coefficients $a^{ij}(t,x)$ there are limited to those being measurable only in the temporal variable and having small mean oscillations with respect to all the spatial variables $x \in \bR^d$.
In contrast, the coefficients $a^{ij} = a^{ij}(t,x)$ in this paper are functions of $(t,x_1) \in \bR \times \bR$ (a function of $t$ or $x_1$ for $a^{11}$) with no regularity assumptions, and have small mean oscillations as functions of the remaining variables $x' \in \bR^{d-1}$ ($x \in \bR^d$ or $(t,x') \in \bR \times \bR^{d-1}$ for $a^{11}$), where $x' = (x_2,\ldots,x_d) \in \bR^{d-1}$ for $d = 2, 3, \ldots$.
See Assumption \ref{assum0808_1}.
Note that if the coefficients $a^{ij}$, $(i,j) \neq (1,1)$, are functions of only $(t,x_1)$, they are in the same class of coefficients as in \cite{MR4345837}, but the coefficient $a^{11}$ is more general than in \cite{MR4345837} even if it is a function of only $t$ or $x_1$.
We will further discuss the assumption on the coefficient $a^{11}$ later.
One advantage of considering such coefficients is that by using even/odd extensions, we immediately obtain the corresponding results for equations in the half space $\{x_1>0\}$ with either the zero Dirichlet, conormal derivative, or Neumann boundary condition, which generalize the main result of \cite{MR4186022}.
By a partition of unity argument, the results can be further extended to cylindrical domains with appropriate conditions on $a^{ij}$.
See Remark \ref{rem2.10}.

To establish the results for equations in {\em weighted parabolic Sobolev spaces}, we utilize so-called $L_{p_0}$-mean oscillation estimates, $p_0 \in (1,\infty)$, for solutions to equations with coefficients considered in \cite{MR4345837}.
See Propositions \ref{prop0930_01} and \ref{prop0930_02}, where no weights or mixed norms are involved.
Once $L_{p_0}$-mean oscillation estimates are obtained, particularly for $p_0$ sufficiently close to $1$, one can prove the unique solvability along with appropriate estimates in {\em weighted Sobolev spaces with mixed norms}.
See the proofs of Theorems \ref{thm1122_1} and \ref{thm1122_2} below as well as \cite{MR3812104} and the references therein.
The $L_{p_0}$-mean oscillation estimates are derived from the unique solvability and a priori $L_p$-estimates for equations in Sobolev spaces with {\em unmixed norms and no weights}.
In this respect, the paper \cite{MR4345837} can be considered a prequel to this paper.
That is, we establish a chain of results
\[
{\small\text{unmixed $L_p$ estimates} \Rightarrow \text{$L_{p_0}$-mean oscillation estimates} \Rightarrow \text{weighted $L_{p,q}$-estimates},}
\]
where \cite{MR4345837} takes care of the first result and this paper resolves the last two results.
This chain of implications also applies to the usual parabolic equations with the local time derivative $u_t$.
However, in the time fractional parabolic case, due to the presence of the non-local time derivative, the first implication of the above chain for $p_0$ close to $1$ is not possible if one follows the proofs for the usual parabolic case presented, for instance, in \cite{MR3812104}.
Specifically, one of the main steps to prove the first implication is improving the regularity of solutions to homogeneous equations.
In the time fractional parabolic case, this regularity-improving process is limited due to the non-Markovian nature of the time fractional derivative.
That is, the regularity of a function at the current moment is affected by the whole history of the function.
To overcome this difficulty, we adapt the approach from \cite{MR4186022}, one of the main features of which is decomposing solutions of equations in infinite cylinders of the form $(-\infty, t) \times B_R(x)$ instead of the usual parabolic cylinders $(t-R_1,t) \times B_{R_2}(x)$.
Moreover, we develop the approach with a refined decomposition of solutions to obtain mean oscillation estimates of $D_1 u$ and $D_{1j} u$, $j=2,\ldots,d$, (for equations in divergence and non-divergence forms, respectively), which have not appeared in the previous papers.

Let us provide further details on the mean oscillation estimates of $D_1 u$ and $D_{1j} u$, $j=2,\ldots,d$, and make a remark on the coefficient $a^{11}$.
We first recall that there are some previous results for parabolic equations with the usual local time derivative $u_t$ and coefficients $a^{ij}$ similar to those in this paper.
Our results in this paper can be compared to those in \cite{MR2764911} for the usual parabolic equations in divergence form and in \cite{MR2833589} for the usual parabolic equations in non-divergence form.
However, these previous results are confined to {\em unmixed} $L_p$ spaces with no weights.
In \cite{MR2764911, MR2833589}, the desired results are derived from, for instance, in the divergence type equation case, the mean oscillation estimates of only $D_j u$, $j=2,\ldots,d$, with $D_1 u$ excluded due to the lack of regularity assumptions on the coefficients in $(t,x_1)$.
These mean oscillation estimates imply the $L_p$-estimates of $D_j u$, $j=2,\ldots,d$, which in turn proves the $L_p$-estimate of $D_1u$ by a sophisticated scaling argument (see, for instance, \cite[Lemma 3.4]{MR2764911} or \cite[Lemma 3.4]{MR4345837}).
For equations in weighted Sobolev spaces, the scaling argument is unavailable because it essentially relies on the scaling invariance (up to a multiple of a constant) of unweighted $L_p$-norms.
Thus, we must deal with the mean oscillation estimates of $D_1u$ in the divergence case and $D_{1j}u$, $j=2,\ldots,d$, in the non-divergence case.
To obtain such estimates,  we use a new approach based on, as mentioned above, a refined composition $u=w+v = w + \tilde{v} + \hat{v}$, where $v$ is a solution to a homogeneous equation and $\hat{v}$ is a solution to a simpler homogeneous equation. See Lemma \ref{lem0228_1}.
The mean oscillation estimates for $D_1u$ and $D_{1j}u$, $j=2,\ldots,d$, allow us to have a more general assumption on $a^{11}$ than those in \cite{MR2764911, MR2833589}.
The difference is that the coefficient $a^{11}$ in \cite{MR2764911, MR2833589} can be measurable either in $t$ or $x_1$ globally in the whole domain, while in this paper $a^{11}$ can be measurable in $t$ or $x_1$ locally.
That is, $a^{11}$ can be measurable in $t$ in one region of the domain and measurable in $x_1$ in another region of the domain.
See Assumption \ref{assum0808_1}.
Applying the arguments and the assumption on $a^{11}$ in this paper to equations with $u_t$, we can get similar results for the usual parabolic equations, which are also new.
It is also worth noting that we derive the necessary results for equations in non-divergence form from those for equations in divergence form so that we do not need to deal with equations in two different forms separately.
The techniques developed in this paper might also be applicable to other types of equations with non-local operators.

To provide context for our work on time fractional parabolic equations and related results in the literature, we refer the reader to the paper \cite{MR4345837} and the references therein.
Also see \cite{MR4247129}, where the authors deal with equations similar to those in this paper but in a different type of weighted Sobolev spaces with $\alpha \in (0,2)$ and continuous $a^{ij}(t,x)$.
Further, one can find related results on time fractional evolution equations in Hilbert space settings in \cite{MR2538276,MR3814402, MR4200127, MR4040846}.

The remainder of the paper is organized as follows. In the next section, we
introduce necessary notation and state the main results of the paper. In Section \ref{sec03}, we derive estimates for equations in divergence form when the coefficients $a^{ij}$ are functions of $(t,x_1)$ ($a^{11}$ is a function of either $t$ or $x_1$).
We then use these results in Section \ref{sec04} to prove mean oscillation estimates of $Du$ for the divergence case and $D^2u$ (except $D_1^2u$) for the non-divergence case.
In Section \ref{sec05}, we prove our main theorems.
Finally, in the Appendix, we present an inequality necessary to take care of the non-local property of time fractional derivatives.

\section{Notation and Main results}							\label{sec02}

\subsection{Notation}
We define the parabolic cylinders
$$
Q_{R_1,R_2}(t,x) = (t-R_1^{2/\alpha}, t) \times B_{R_2}(x), \quad Q_R(t,x) = Q_{R,R}(t,x).
$$
For $\Omega \subset \bR^d$ and $T > 0$, we denote
$\Omega_T = (0,T) \times \Omega$. In particular, we have $\bR^d_T = (0,T) \times \bR^d$.
We write
$$
D_{x'} u = D_{x_\ell} u, \quad \ell = 2, \ldots, d.
$$
We use the notation $(u)_{\cD}$ to denote the average of $u$ over $\cD$, where $\cD$ is a subset of $\bR^{d+1}$.
That is,
\[
(u)_\cD = \dashint_{\cD} u(t,x) \, dx \, dt = \frac{1}{|\cD|} \int_{\cD} u(t,x) \, dx \, dt.
\]

Let $\alpha \in (0,1)$ and $S \in \bR$.
We denote the $\alpha$-the integral of $u$ with origin $S$ by
\[
I_S^{\alpha} u = \frac{1}{\Gamma(1-\alpha)} \int_S^t (t-s)^{\alpha-1} u(s,x) \, ds.
\]
In particular, we write $I^{\alpha} u$ if $S = 0$.
Set
\[
\partial_t^\alpha u = \partial_t I_S^{1-\alpha} \left( u(\cdot,x) - u(S,x) \right) = \frac{1}{\Gamma(1-\alpha)} \partial_t \int_S^t (t-s)^{-\alpha}\left(u(s,x) - u(S,x)\right) \, ds
\]
for a sufficiently smooth $u$, which is called the Caputo fractional derivative of order $\alpha$ with respect to time at $S$ (time fractional derivative of order $\alpha$ at $S$).
If $u$ further satisfies $u(S,x) = 0$, we see that
$$
\partial_t^\alpha u
= \partial_t I_S^{1-\alpha} u.
$$
Whenever we write $\partial_t^\alpha u$, the origin $S$ is clear from the context or $S=0$, that is,
\[
\partial_t^\alpha u = \partial_t I^{1-\alpha}_0 u = \partial_t I^{1-\alpha} u.
\]

\subsection{Function spaces}

Here we introduce function spaces for solutions to the equations discussed in this paper.
We set $p \in (1,\infty)$ and $\alpha \in (0,1)$.

For $p \in (1,\infty)$ and $k \in \{1,2,\ldots\}$, we let $A_p(\bR^k,dx)$ be the set of all locally integrable non-negative functions $w$ on $\bR^k$ such that
\begin{equation}
							\label{eq0211_03}
[w]_{A_p} := \sup_{x_0 \in \bR^k, r > 0} \left(\dashint_{B_r(x_0)} w(x) \, dx\right) \left( \dashint_{B_r(x_0)} w(x)^{-\frac{1}{p-1}} \, dx \right)^{p-1} < \infty,
\end{equation}
where $B_r(x_0) = \{x \in \bR^k : |x-x_0| < r\}$. Recall that $[w]_{A_p} \geq 1$.

For $w(t,x) = w_1(t)w_2(x)$, where $(t,x) \in \bR \times \bR^d$, $w_1 \in A_p(\bR,dt)$, and $w_2 \in A_q(\bR^d,dx)$, we set $L_{p,q,w}(\bR^d_T)$ to be the set of all measurable functions $f$ defined on $\bR^d_T$ such that
\begin{equation}
							\label{eq0211_02}
\|f\|_{L_{p,q,w}(\bR^d_T)} :=
\left(\int_0^T \left( \int_{\bR^d} |f(t,x)|^q w_2(x) \, dx \right)^{p/q} w_1(t) \, dt \right)^{1/p}  < \infty.
\end{equation}
Note that if $f \in L_{p,q,w}(\bR^d_T)$, by the reverse H\"older's inequality for $A_p$ weights (see, for instance, Corollary 7.2.6 and Remark 7.2.3 in \cite{MR3243734}), there exists $p_0 \in (1,\infty)$ such that $f \in L_{p_0}\left((0,T) \times B_R\right)$ for any $R > 0$.

\begin{definition}
							\label{def0513_1}
We say $u \in \bH_{p,q,w,0}^{\alpha,0}(\Omega_T)$ if $u \in L_{p,q,w}(\Omega_T)$ and there exists $f \in L_{p,q,w}(\Omega_T)$ such that
\begin{equation}
							\label{eq0513_01}
\int_{\Omega_T}I^{1-\alpha} u \, \varphi_t \, dx \, dt = - \int_{\Omega_T} f \, \varphi \, dx \, dt
\end{equation}
for all $\varphi \in C_0^\infty\left([0,T) \times \Omega\right)$.
In this case, as a weak derivative, we have $\partial_t I^{1-\alpha} u = f$. We also have
\begin{equation}
							\label{eq1115_02}
\partial_t^\alpha u = \partial_t I^{1-\alpha} u.
\end{equation}
See Remark \ref{rem0727_1}.
For solutions to non-divergence type equations, for a positive integer $k$, mostly $k=2$, we set
\[
\bH_{p,q,w,0}^{\alpha,k}(\Omega_T) = \{u \in \bH_{p,q,w,0}^{\alpha,0}(\Omega_T): D^j u \in L_{p,q,w}(\Omega_T), j=1,\ldots,k\}
\]
with the norm
\[
\|u\|_{\bH_{p,q,w,0}^{\alpha,k}(\Omega_T)} = \sum_{j=0}^k\|D^j u\|_{L_{p,q,w}(\Omega_T)} + \|\partial_t^\alpha u\|_{L_{p,q,w,0}(\Omega_T)}.
\]

\end{definition}

\begin{remark}
							\label{rem0727_1}
Note that the test function $\varphi$ in \eqref{eq0513_01} belongs to $C_0^\infty([0,T) \times \Omega)$ so that $\varphi(0,x)$ is not necessarily zero.
The equality \eqref{eq0513_01} for such test functions implies that the equality \eqref{eq1115_02} holds for all $u \in \bH_{p,q,w,0}^{\alpha,0}(\Omega_T)$.
Precisely, as shown in \cite{preprint2022}, for any $u \in \bH_{p,q,w,0}^{\alpha,0}(\Omega_T)$ ($\bH_{p,q,w,0}^{\alpha,k}(\Omega_T)$ as well), there exists an approximating sequence $\{u_n\}$ of $u$ such that $u_n \in C^\infty([0,T] \times \Omega)$, $u_n$ vanishes for large $|x|$ (when $\Omega$ is unbounded),  and $u_n(0,x) = 0$. Thus, the equality \eqref{eq1115_02} makes sense as
\[
\partial_t^\alpha u = \lim_{n \to \infty} \partial_t^\alpha u_n = \lim_{n \to \infty}  \partial_t I^{1-\alpha}u_n(t,x),
\]
where the limit is in the norm of $L_{p,q,w}(\Omega_T)$.
If $\alpha \in (1/p,1)$, for which the initial trace $u(0,x)$ makes sense, the equality \eqref{eq0513_01} implies that $u(0,x)$ is zero.
Similarly, by \eqref{eq0601_01} below,
the equality  \eqref{eq1115_03} makes sense.
 For initial traces and other related results, see \cite{preprint2022}.
\end{remark}

To introduce function spaces for solutions to divergence type equations, we first define $\bH_{p,q,w}^{-1}(\Omega_T)$ as follows.
A distribution $v$ on $\Omega_T$ is said to $v \in \bH_{p,q,w}^{-1}(\Omega_T)$ if there exist $G_i, F \in L_{p,q,w}(\Omega_T)$ such that
\begin{equation*}
v = D_i G_i + F
\end{equation*}
in the distribution sense.
We define the norm of $\bH_{p,q,w}^{-1}(\Omega_T)$ by
\[
\|v\|_{\bH_{p,q,w}^{-1}(\Omega_T)} = \inf \{ \|F\|_{L_{p,q,w}(\Omega_T)} + \|G_i\|_{L_{p,q,w}(\Omega_T)}: v = D_i G_i + F\}.
\]

\begin{definition}
							\label{def0727_1}
We say $u \in \cH_{p,q,w,0}^{\alpha,0}(\Omega_T)$ if $u \in L_{p,q,w}(\Omega_T)$ and there exist $g_i, f \in L_{p,q,w}(\Omega_T)$ such that
\begin{equation}
							\label{eq0601_01}
\int_{\Omega_T} I^{1-\alpha} u \, \varphi_t \, dx \, dt
= \int_{\Omega_T} \left(g_i D_i \varphi - f \varphi \right) \, dx \, dt
\end{equation}
for all $\varphi \in C_0^\infty\left([0,T) \times \Omega\right)$.
That is, in the distributional sense,
\[
\partial_t I^{1-\alpha} u = D_i g_i + f
\]
and $\partial_t I^{1-\alpha} u \in \bH_{p,q,w}^{-1}(\Omega_T)$.
As mentioned in Remark \ref{rem0727_1}, we have
\begin{equation}
							\label{eq1115_03}
\partial_t^\alpha u = \partial_t I^{1-\alpha} u.
\end{equation}

For solutions to divergence type equations, we set
\[
\cH_{p,q,w,0}^{\alpha,1}(\Omega_T) = \{ u \in \cH_{p,q,w,0}^{\alpha,0}(\Omega): Du \in L_{p,q,w}(\Omega_T)\}
\]
with the norm
\begin{equation*}
\|u\|_{\cH_{p,q,w,0}^{\alpha,1}(\Omega_T)} = \|u\|_{L_{p,q,w}(\Omega_T)} + \|Du\|_{L_{p,q,w}(\Omega_T)} + \|\partial_t^\alpha u\|_{\bH_{p,q,w}^{-1}(\Omega_T)}.
\end{equation*}

\end{definition}

As usual, when $p=q$ and $w(t,x) = 1$, we denote
\[
\bH_{p,p,1,0}^{\alpha,2}(\Omega_T) = \bH_{p,0}^{\alpha,2}(\Omega_T) \quad \text{and} \quad \cH_{p,p,1,0}^{\alpha,1}(\Omega_T) = \cH_{p,0}^{\alpha,1}(\Omega_T).
\]

\begin{remark}
In our previous papers, in particular, when $p=q$ and $w=1$, we used spaces such as $\widetilde{\bH}_p^{\alpha,2}(\Omega_T)$, $\bH_p^{\alpha,2}(\Omega_T)$, $\bH_{p,0}^{\alpha,2}(\Omega_T)$ for non-divergence type equations (see, for instance, \cite{MR3899965}), and $\widetilde{\cH}_p^{\alpha,1}(\Omega_T)$, $\cH_p^{\alpha,1}(\Omega_T)$, $\cH_{p,0}^{\alpha,1}(\Omega_T)$ for divergence type equations (see, for instance, \cite{MR4030286}).
In this paper even if we use the same notation, some of the spaces from \cite{MR3899965} and \cite{MR4030286} differ from those in this paper.
Indeed, as mentioned in Remark \ref{rem0727_1}, $u \in \bH_{p,0}^{\alpha,k}(\Omega_T)$ can be approximated by infinitely differentiable functions with zero initial values.
This means $\bH_{p,0}^{\alpha,k}(\Omega_T)$ exactly corresponds to the space using the same notation, for instance, in \cite{MR3899965}.
The same applies to $\cH_{p,0}^{\alpha,1}(\Omega_T)$ defined, for instance, in \cite{MR4030286}.
However, the space $\bH_p^{\alpha,k}(\Omega_T)$ in \cite{MR3899965} turns out to be the same as $\bH_{p,0}^{\alpha,k}(\Omega_T)$.
Similarly,  $\cH_p^{\alpha,1}(\Omega_T)$ in \cite{MR4030286} is the same as $\cH_{p,0}^{\alpha,1}(\Omega_T)$.
We do not use $\widetilde{\bH}_p^{\alpha,k}(\Omega_T)$ and $\widetilde{\cH}_p^{\alpha,1}(\Omega_T)$ in this paper.
It is worth pointing out that, by the definitions in \cite{MR3899965}, to verify that $u \in \bH_{p,0}^{\alpha,k}(\Omega_T)$, one must find an approximating sequence $\{u_n\}$ such that $u_n \in C^\infty([0,T] \times \Omega)$ with $u_n(0,x)=0$.
However, by Definition \ref{def0513_1}, we now only need to check if the equality \eqref{eq0513_01} holds for all test functions from $C_0^\infty([0,T) \times \Omega)$.
Similarly, to check that $u \in \cH_{p,0}^{\alpha,1}(\Omega_T)$, we only need to verify \eqref{eq0601_01} for all $\varphi \in C_0^\infty([0,T) \times \Omega)$.
\end{remark}

\subsection{Assumptions}

Throughout the paper, we assume that there exists $\delta \in (0,1)$ such that
\begin{equation}
							\label{eq0310_01}
a^{ij}(t,x)\xi_i \xi_j \geq \delta |\xi|^2, \quad |a^{ij}(t,x)| \leq \delta^{-1}
\end{equation}
for any $\xi \in \bR^d$ and $(t,x) \in \bR \times \bR^d$.

To state our regularity assumptions on $a^{ij}$, we first introduce coefficients $a^{ij}$ which are measurable in $(t,x_1)$ except $(i,j) = (1,1)$.
For $a^{11}$, we have either $a^{11} = a^{11}(t)$ or $a^{11}=a^{11}(x_1)$.

\begin{assumption}
							\label{assum0808_2}
The coefficient matrix $\{a^{ij}\}_{i,j=1,\ldots,d}$ with the ellipticity (and boundedness) condition \eqref{eq0310_01} satisfies either (i) or (ii) of the following.
\begin{enumerate}
\item[(i)]
$a^{11} = a^{11}(t)$, $a^{ij} = a^{ij}(t,x_1)$ for $(i,j) \neq (1,1)$.
\item[(ii)]
$a^{11} = a^{11}(x_1)$, $a^{ij} = a^{ij}(t,x_1)$ for $(i,j) \neq (1,1)$.
\end{enumerate}
\end{assumption}

Here are our assumptions for  partially SMO coefficients.
As mentioned above, $a^{ij}$ always satisfy \eqref{eq0310_01}.
We also impose the boundedness assumption for lower-order coefficients.

For $x \in \bR^d$, we write $x = (x_1,x')$, where $x_1\in \bR$ and $x'\in \bR^{d-1}$.
We then denote
\[
B_r'(x') = \{y' \in \bR^{d-1}: |x'-y'| < r\}, \quad
Q_r'(t,x') = (t-r^{2/\alpha},t) \times B_r'(x').
\]

\begin{assumption}[$\gamma_0$]
							\label{assum0808_1}
There is a constant $R_0 \in (0,1]$ satisfying the following.
\begin{enumerate}
\item For each $(t_0,x_0) \in \bR^{d+1}$ and $r \in (0,R_0]$, the coefficients $a^{ij}$ with $(i,j) \neq (1,1)$ satisfy
\[
\dashint_{Q_r(t_0,x_0)} |a^{ij}(t,x_1,x') - \bar{a}^{ij}(t,x_1) | \, dx \, dt \leq \gamma_0,
\]
where
\begin{equation}
							\label{eq1122_01}
\bar{a}^{ij}(t,x_1) = \dashint_{B_r'(x_0')} a^{ij}(t,x_1,y') \, dy'.
\end{equation}
\item For each $(t_0,x_0) \in \bR^{d+1}$, the coefficient $a^{11}$ satisfies either (2.i) or (2.ii) of the following.
\begin{enumerate}
\item[(2.i)] For $(t_1,x_0)$ with $t_1 \in (t_0-3 R_0^{2/\alpha},t_0]$ and $r \in (0,R_0]$,
\begin{equation*}
\dashint_{Q_r(t_1,x_0)} |a^{11}(t,x_1,x') - \bar{a}^{11}(t)| \, dx \, dt \leq \gamma_0,
\end{equation*}
where
\begin{equation}
							\label{eq1122_02}
\bar{a}^{11}(t) = \dashint_{B_r(x_0)} a^{11}(t,y) \, dy.
\end{equation}
\item[(2.ii)] For $(t_1,x_0)$ with $t_1 \in (t_0-3 R_0^{2/\alpha},t_0]$ and $r \in (0,2^{\alpha/2}R_0]$,
\begin{equation*}
\dashint_{Q_r(t_1,x_0)} |a^{11}(t,x_1,x') - \bar{a}^{11}(x_1)| \, dx \, dt \leq \gamma_0,
\end{equation*}
where
\begin{equation}
							\label{eq1122_03}
\bar{a}^{11}(x_1) = \dashint_{Q_r'(t_1,x_0')} a^{11}(s,x_1,y') \, dy' \, ds.
\end{equation}
\end{enumerate}
\end{enumerate}
For the lower-order coefficients $a^i$, $b^i$, and $c$, there exists $K_0 \geq 0$ such that
\[
|a^i| \leq K_0, \quad |b^i| \leq K_0, \quad |c| \leq K_0.
\]
\end{assumption}

\begin{remark}
							\label{rem1122_1}
Under Assumption \ref{assum0808_1}, we observe that, for any $R \in (r^{2/\alpha}, 2R_0^{2/\alpha}]$ and $r \leq R_0$,
\[
\dashint_{\!t_0-R}^{\,\,\,t_0} \dashint_{B_r(x_0)} |a^{ij}-\bar{a}^{ij}| \, dx \, dt \leq 2 \gamma_0
\]
for $a^{ij}$ with $(i,j) \neq (1,1)$ and for $a^{11}$ satisfying (2.i), where $\bar{a}^{ij}$ is in \eqref{eq1122_01} for $a^{ij}$ with $(i,j)\neq(1,1)$ and in \eqref{eq1122_02} for $a^{11}$.
For $a^{11}$ satisfying (2.ii), we have
\[
\dashint_{\!t_0-R}^{\,\,\,t_0} \dashint_{B_r(x_0)} |a^{11}-\bar{a}^{11}| \, dx \, dt \leq N \frac{R}{r^{2/\alpha}} \gamma_0,
\]
where $\bar{a}^{11}$ is in \eqref{eq1122_03} and $N = N(d,\alpha)$.
For the proofs of the above inequalities, see  \cite[Remark 2.3]{MR3899965} and \cite[Lemma 2.14]{MR4464544}.
\end{remark}

\subsection{Main results}

Our first main theorem is for equations in divergence form.
By a solution $u$ to \eqref{eq1124_02}, we mean that $u$ satisfies
\begin{multline*}
\int_0^T\int_{\bR^d}\left( I^{1-\alpha} u \, \varphi_t \, dx \, dt - a^{ij}D_j u D_i \varphi - a^i u D_i \varphi + b^i D_i u \varphi + c u \varphi \right) \, dx \, dt
\\
= \int_0^T\int_{\bR^d} (f \varphi - g_i D_i \varphi) \, dx \, dt
\end{multline*}
for any $\varphi \in C_0^\infty\left([0,T) \times \bR^d\right)$.
As discussed in Remark \ref{rem0727_1}, by the definition of $\cH_{p,q,w,0}^{\alpha,1}(\bR^d_T)$ (the above formulation as well), \eqref{eq1124_02} is an equation with the zero initial condition if the initial trace makes sense.

\begin{theorem}[Divergence case]
							\label{thm1122_1}
Let $\alpha \in (0,1)$, $T \in (0,\infty)$, $p,q \in (1,\infty)$, $K_1 \in [1,\infty)$, and $w(t,x) = w_1(t)w_2(x)$, where
\[
w_1(t) \in A_p(\bR,dt), \quad w_2(x) \in A_q(\bR^d,dx), \quad [w_1]_{A_p} \leq K_1, \quad [w_2]_{A_q} \leq K_1.
\]
Then, there exists $\gamma_0 = \gamma_0(d,\delta,\alpha, p, q, K_1) \in (0,1)$ such that, under Assumption \ref{assum0808_1} ($\gamma_0$), the following hold.

For any $u \in \cH_{p,q,w,0}^{\alpha,1}(\bR^d_T)$ satisfying
\begin{equation}
							\label{eq1124_02}
-\partial_t^\alpha u + D_i(a^{ij}D_j u + a^i u) + b^i D_i u + c u = D_i g_i + f
\end{equation}
in $\bR^d_T$, where $g_i, f \in L_{p,q,w}(\Omega_T)$, we have
\begin{equation}
							\label{eq1124_03}
\|u\|_{\cH_{p,q,w,0}^{\alpha,1}(\bR^d_T)} \leq N \|g_i\|_{L_{p,q,w}(\bR^d_T)} + N \|f\|_{L_{p,q,w}(\bR^d_T)},
\end{equation}
where $N = N(d,\delta,\alpha, p,q,K_1,K_0,R_0,T)$.
Moreover, for $g_i, f \in L_{p,q,w}(\bR^d_T)$, there exists a unique solution $u \in \cH_{p,q,w,0}^{\alpha,1}(\bR^d_T)$ satisfying \eqref{eq1124_02}.
\end{theorem}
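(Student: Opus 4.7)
The overall strategy is to follow the chain
\[
\text{unmixed } L_p\text{-estimates} \;\Rightarrow\; L_{p_0}\text{-mean oscillation estimates} \;\Rightarrow\; \text{weighted } L_{p,q,w}\text{-estimates}
\]
spelled out in the introduction. The first implication is handled by \cite{MR4345837}, and the second is to be established as Propositions \ref{prop0930_01} and \ref{prop0930_02} in Section \ref{sec04}. Granting those, the proof of Theorem \ref{thm1122_1} proceeds in three stages. First, I reduce to the leading-order equation: the terms $a^i u$, $b^i D_i u$, $c u$ are peeled off by a standard perturbation, namely adding a large parameter $\lambda u$ to the equation, proving the estimate with $\lambda$ included, and then removing $\lambda$ by an absorption argument adapted to the time fractional setting. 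The datum $f$ can be kept on the right-hand side since $L_{p,q,w}$ is closed under it, while $a^i u$ and $b^i D_i u$ are absorbed using boundedness of the coefficients and the controlled dependence of constants on $K_0$.

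The principal stage converts the pointwise $L_{p_0}$-mean oscillation bounds from Propositions \ref{prop0930_01}--\ref{prop0930_02} into \eqref{eq1124_03}. Those propositions furnish, for a solution $u$ of the leading-order equation, an inequality of the schematic form
\[
\bigl(|Du - (Du)_{Q_r(t_0,x_0)}|^{p_0}\bigr)_{Q_r(t_0,x_0)}^{1/p_0} \le N\gamma_0^{\kappa}\bigl[\mathcal{M}(|Du|^{p_0})(t_0,x_0)\bigr]^{1/p_0} + N\bigl[\mathcal{M}(|G|^{p_0})(t_0,x_0)\bigr]^{1/p_0},
\]
where $G$ bundles the right-hand side data $g_i, f$, $\mathcal{M}$ is the parabolic Hardy--Littlewood maximal function associated with the cylinders $Q_r$, and $p_0 \in (1,\infty)$ is at my disposal. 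By the reverse H\"older inequality for $A_p$ weights (cited after \eqref{eq0211_02}), I can choose $p_0$ so close to $1$ that $w_1 \in A_{p/p_0}(\bR, dt)$ and $w_2 \in A_{q/p_0}(\bR^d, dx)$ with constants controlled by $K_1, p, q, d$. The weighted mixed-norm Fefferman--Stein sharp function theorem together with boundedness of $\mathcal{M}$ on $L_{p,q,w}$ then yields
\[
\|Du\|_{L_{p,q,w}(\bR^d_T)} \le N\gamma_0^{\kappa}\|Du\|_{L_{p,q,w}(\bR^d_T)} + N\|G\|_{L_{p,q,w}(\bR^d_T)},
\]
and choosing $\gamma_0$ small absorbs the first term. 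The bound for $\partial_t^\alpha u$ in $\bH_{p,q,w}^{-1}(\bR^d_T)$ is then read off from the equation itself, while the bound for $u$ in $L_{p,q,w}$ is obtained by a weighted Poincar\'e-type argument in $t$. Existence follows by the method of continuity connecting $a^{ij}$ to the constant matrix $\delta_{ij}$, for which solvability in $\cH_{p,q,w,0}^{\alpha,1}(\bR^d_T)$ is known or can be reduced to results in the spirit of \cite{MR4186022}; uniqueness is immediate from \eqref{eq1124_03}.

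I expect the main obstacle to lie not in the weighted passage itself---which is by now a robust machinery once the $L_{p_0}$-oscillation estimates with $p_0$ close to $1$ are in hand---but in producing mean oscillation control for $D_1 u$ (as opposed to $D_{x'} u$). In the unweighted setting of \cite{MR2764911, MR4345837}, $D_1 u$ was recovered from $D_{x'} u$ by a scaling argument that exploits the scale covariance of unweighted $L_p$-norms; this device is unavailable here because weighted norms are not universally scale-covariant. The remedy is the refined decomposition $u = w + \tilde v + \hat v$ indicated in the introduction, where $\tilde v$ and $\hat v$ solve progressively simpler homogeneous equations so that regularity in $x_1$ can be traced through $\hat v$ (see Lemma \ref{lem0228_1}). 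Ensuring that this decomposition accommodates both cases (2.i) and (2.ii) of Assumption \ref{assum0808_1}---which involve different scalings of the parabolic cylinder for $a^{11}$---is the most delicate accounting, and it is what makes the local-measurability assumption on $a^{11}$ accessible at all.
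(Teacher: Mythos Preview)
Your overall strategy matches the paper's, but the schematic oscillation estimate you wrote hides precisely the step where the proof has content. The actual estimates coming out of Proposition \ref{prop0930_01} (packaged in Lemma \ref{lem1124_1}) do \emph{not} have a single smallness factor $\gamma_0^\kappa$ in front of $\mathcal{M}(|Du|^{p_0})$. There are two parameters: a term $N\kappa^\sigma(\cS\cM|Du|^{p_0})^{1/p_0}$ from H\"older regularity of the homogeneous piece, and a term $N\kappa^{-(d+2/\alpha)/p_0}\gamma_0^{1/(\nu p_0)}(\cS\cM|Du|^{\mu p_0})^{1/(\mu p_0)}$ from the SMO perturbation. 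More importantly, the oscillation estimate for $D_1u$ carries an additional term $N\kappa^{-(d+2/\alpha)/p_0}(\cS\cM|D_{x'}u|^{p_0})^{1/p_0}$ that has \emph{no} smallness at all; this is the residue of the refined decomposition you mention. You cannot absorb it by taking $\gamma_0$ small. The paper handles this by first closing the estimate for $\|D_{x'}u\|$ alone (whose oscillation bound \eqref{eq1008_02} lacks this problematic term), then multiplying the $D_1u$ inequality \eqref{eq0210_02} by $(2N)^{-1}\kappa^{(d+2/\alpha)/p_0}$ before adding it to \eqref{eq0210_01}; the two-parameter absorption then proceeds by taking $\kappa$ small first and $\gamma_0$ small second.

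A second point you skate over: in case (2.ii) of Assumption \ref{assum0808_1} the oscillation is controlled not for $D_1u$ but for $U_1 = \bar a^{11}D_1u$, and $\bar a^{11}$ changes with the cylinder $Q_{\kappa r}(t_0,x_0)$. The standard Fefferman--Stein sharp function theorem does not apply; one needs the pointwise-comparability version \cite[Corollary 2.8]{MR3812104}, invoked via $\delta \le \bar a^{11} \le \delta^{-1}$. Finally, the bound for $\|u\|_{L_{p,q,w}}$ is not a weighted Poincar\'e argument in $t$: the paper removes the compact-support restriction and lower-order terms by a partition of unity together with Agmon's trick (as in \cite[Lemmas 6.4--6.5]{MR4030286}), obtaining \eqref{eq9.40} with an $\|u\|$ term on the right, and then eliminates that term by a time-partition argument specific to the fractional setting.
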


Here is our main theorem for equations in non-divergence form.
The equation \eqref{eq1124_01} holds almost everywhere and, by the definition of $\bH_{p,q,w,0}^{\alpha,2}(\bR^d_T)$, has the zero initial condition if the initial trace makes sense.

\begin{theorem}[Non-divergence case]
							\label{thm1122_2}
Let $\alpha$, $T$, $p$, $q$, $K_1$, and $w$ be as in Theorem \ref{thm1122_1}.
Then, there exists $\gamma_0 = \gamma_0(d,\delta,\alpha, p, q, K_1) \in (0,1)$ such that, under Assumption \ref{assum0808_1} ($\gamma_0$), the following hold.

For any $u \in \bH_{p,q,w,0}^{\alpha,2}(\bR^d_T)$ satisfying
\begin{equation}
							\label{eq1124_01}
-\partial_t^\alpha u + a^{ij} D_{ij} u + b^i D_i u + c u =f
\end{equation}
in $\bR^d_T$, where $f \in L_{p,q,w}(\Omega_T)$, we have
\begin{equation}
							\label{eq1124_04}
\|u\|_{\bH_{p,q,w,0}^{\alpha,2}(\bR^d_T)} \leq N \|f\|_{L_{p,q,w}(\bR^d_T)},
\end{equation}
where $N = N(d,\delta,\alpha, p,q,K_1,K_0,R_0,T)$.
Moreover, for $f \in L_{p,q,w}(\bR^d_T)$, there exists a unique $u \in \bH_{p,q,w,0}^{\alpha,2}(\bR^d_T)$ satisfying \eqref{eq1124_01}.
\end{theorem}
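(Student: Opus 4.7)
The plan is to derive Theorem \ref{thm1122_2} from the $L_{p_0}$-mean oscillation estimates for the second derivatives $D_{ij}u$ with $(i,j)\neq(1,1)$ and for $\partial_t^\alpha u$ that are established in Section \ref{sec04} (the non-divergence counterpart of the analysis behind Theorem \ref{thm1122_1}), by combining them with the Fefferman--Stein sharp function machinery in weighted mixed-norm spaces, and then invoking the method of continuity for existence and uniqueness.

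I would first fix $p_0\in(1,\infty)$ sufficiently close to $1$ so that, by the reverse H\"older property for $A_p$-weights, $w_1\in A_{p/p_0}(\bR,dt)$ and $w_2\in A_{q/p_0}(\bR^d,dx)$; in particular, the parabolic Hardy--Littlewood maximal operator adapted to the cylinders $Q_r$ is bounded on $L_{p,q,w}(\bR^d_T)$. The key mean-oscillation estimate should read, schematically,
\[
\bigl(|D_{ij}u-(D_{ij}u)_{Q_r}|^{p_0}\bigr)_{Q_r}^{1/p_0}\le N\gamma_0^{\beta}\bigl[\cM(|D^2u|^{p_0})\bigr]^{1/p_0}+N\bigl[\cM(|f|^{p_0})\bigr]^{1/p_0}+\mathrm{l.o.t.}
\]
for $(i,j)\neq(1,1)$, with an analogous bound for $\partial_t^\alpha u$. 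Taking $L_{p,q,w}$-norms, applying the weighted Fefferman--Stein theorem, and absorbing the $\gamma_0^{\beta}\|D^2u\|_{L_{p,q,w}}$ contribution on the left (which becomes possible once $\gamma_0$ is chosen small in terms of $d,\delta,\alpha,p,q,K_1$) would yield the $L_{p,q,w}$-bound for every $D_{ij}u$ with $(i,j)\neq(1,1)$ and for $\partial_t^\alpha u$. The remaining derivative $D_{11}u$ is then recovered algebraically from \eqref{eq1124_01} via
\[
a^{11}D_{11}u=f+\partial_t^\alpha u-\sum_{(i,j)\neq(1,1)}a^{ij}D_{ij}u-b^iD_iu-cu
\]
together with the ellipticity $a^{11}\ge\delta$. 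Lower-order contributions $\|Du\|_{L_{p,q,w}}$ and $\|u\|_{L_{p,q,w}}$ are disposed of by the usual interpolation plus short-time-interval argument adapted to the time-fractional setting (this is what produces the $T$- and $K_0$-dependence of the constant in \eqref{eq1124_04}).

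For existence I would set up the method of continuity along $L_\lambda u=-\partial_t^\alpha u+((1-\lambda)\delta^{ij}+\lambda a^{ij})D_{ij}u+\lambda(b^iD_iu+cu)$, $\lambda\in[0,1]$. The uniform a priori estimate just obtained reduces solvability at $\lambda=1$ to solvability of the constant-coefficient equation $-\partial_t^\alpha u+\Delta u=f$ in $L_{p,q,w}(\bR^d_T)$, which is available from the weighted constant-coefficient theory (and can also be extracted from Theorem \ref{thm1122_1} applied to the corresponding divergence-form model). Uniqueness follows from the same a priori estimate applied to the difference of two solutions.

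The main obstacle is not this last machinery, which is by now standard once its ingredients are in place; it is rather the mean-oscillation bound for the mixed derivatives $D_{1j}u$, $j\ge 2$, that feeds into the scheme above. Because $a^{11}$ is only measurable (locally) in $t$ or in $x_1$, the classical unweighted scaling trick used in \cite{MR2833589,MR4345837} to bypass direct control of $D_1$-type quantities is unavailable in the weighted mixed-norm setting; this is what forces the refined decomposition $u=w+\tilde v+\hat v$ on infinite cylinders $(-\infty,t)\times B_R(x)$ announced in the introduction (see Lemma \ref{lem0228_1}), and — combined with the non-Markovian nature of $\partial_t^\alpha$ — accounts for the genuine technical work that must be completed before the plan above can be executed.
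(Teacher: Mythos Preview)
Your overall architecture (mean-oscillation estimates $\Rightarrow$ weighted Fefferman--Stein $\Rightarrow$ a priori bound $\Rightarrow$ method of continuity) matches the paper, and your discussion of the $D_{1j}u$ difficulty is on target. But the step you propose for recovering $D_{11}u$ has a real gap.

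You want a mean-oscillation estimate for $\partial_t^\alpha u$ and then solve algebraically for $a^{11}D_{11}u$. The paper does \emph{not} produce a mean-oscillation estimate for $\partial_t^\alpha u$, and in this coefficient class one is not available by the H\"older-regularity route that drives all the oscillation bounds here. After freezing, the refined homogeneous piece $\hat v$ satisfies $-\partial_t^\alpha\hat v+\bar a^{11}D_{11}\hat v+\Delta_{x'}\hat v=0$; even though $D_{11}\hat v$ can be made H\"older (coefficients depend only on $t$ or $x_1$), the product $\bar a^{11}D_{11}\hat v$ is only measurable in $t$ (resp.\ in $x_1$), so $\partial_t^\alpha\hat v$ is not H\"older and its oscillation cannot be controlled by the right-hand side you wrote. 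The same obstruction already arises in the simpler setting of \cite{MR4186022}, which is why that paper also never estimates the oscillation of $\partial_t^\alpha u$.

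The paper's actual device for $D_{11}u$ is different and elegant: once \eqref{eq1124_08} gives $\|DD_{x'}u\|_{L_{p,q,w}}$, set $u_1=D_1u$ and observe that $u_1$ solves the \emph{divergence-form} equation
\[
-\partial_t^\alpha u_1+D_1(a^{11}D_1u_1)+\Delta_{x'}u_1=D_1g_1,\qquad g_1=\Delta_{x'}u-\!\!\sum_{(i,j)\neq(1,1)}\!\!a^{ij}D_{ij}u+f,
\]
whose leading coefficients ($a^{11}$ and the identity in $x'$) satisfy Assumption~\ref{assum0808_1}. Theorem~\ref{thm1122_1} then yields $\|D_1u_1\|_{L_{p,q,w}}=\|D_{11}u\|_{L_{p,q,w}}\le N\|g_1\|_{L_{p,q,w}}\le N\|DD_{x'}u\|_{L_{p,q,w}}+N\|f\|_{L_{p,q,w}}$, which closes the loop after choosing $\gamma_0$ small. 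So replace your ``$\partial_t^\alpha u$ $+$ algebra'' step by this reduction to the divergence case; the rest of your outline (partition of unity to drop the support and lower-order restrictions, time-partition to absorb $\|u\|$, method of continuity) is exactly what the paper does.
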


\begin{remark}
                \label{rem2.10}
By using even/odd extensions, from Theorems \ref{thm1122_1} and \ref{thm1122_2}, we can readily obtain the corresponding results in the half space $\{x_1>0\}$ with either the zero Dirichlet (for equations in divergence and non-divergence forms), conormal derivative (for equations in divergence form), or Neumann boundary condition (for equations in non-divergence form). We refer the reader to the proofs of \cite[Theorems 2.4 and 2.5]{MR2764911} for details.
We remark that, as to equations on sufficiently regular domains other than the whole Euclidean space and a half space, one can deal with parabolic equations with $a^{ij}$ measurable in $t$ or in one spatial variable (not in both $t$ and one spatial variable as those in this paper).
In particular, near the boundary, the spatial direction in which $a^{ij}$ are measurable has to be (almost) perpendicular to the boundary.
Also, see \cite{MR3206986} for parabolic equations (with $u_t$) in non-divergence form with a restricted range of $p$ when $a^{ij}$ are measurable in a tangential direction to the boundary.
\end{remark}

\section{Equations in divergence form with measurable coefficients}
							\label{sec03}

In this section we consider
\[
\partial_t^\alpha u + D_i(a^{ij}D_ju) = D_i g_i + f
\]
with coefficients $a^{ij}$ satisfying Assumption \ref{assum0808_2}.

\begin{proposition}[Right-hand side having less summability]
							\label{prop1113_1}
Let $\alpha \in (0,1)$, $p \in (1,\infty)$, $T \in (0,\infty)$, and $a^{ij}$ satisfy Assumption \ref{assum0808_2}.
Then, for $g_i \in L_p(\bR^d_T)$ and $f \in L_q(\bR^d_T)$, where $q \in (1,\infty)$ and
$$
\frac{1}{d+2/\alpha}+ \frac{1}{p} \geq \frac{1}{q} \geq \frac{1}{p},
$$
there exists a unique function $u$ on $\bR^d_T$ such that $u, Du \in L_p(\bR^d_T)$ and
\begin{equation}
							\label{eq1113_03}
-\partial_t^\alpha u + D_i\left(a^{ij} D_j u \right) = D_i g_i + f
\end{equation}
in $\bR^d_T$ with the estimate
\begin{equation}
							\label{eq1113_04}
\|u\|_{L_p(\bR^d_T)} + \|Du\|_{L_p(\bR^d_T)} \leq N \|g_i\|_{L_p(\bR^d_T)} + N\|f\|_{L_q(\bR^d_T)},
\end{equation}
where $N_0 = N_0(d,\delta,\alpha,p,q,T)$.
\end{proposition}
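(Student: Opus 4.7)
The plan is to reduce to the case in which the right-hand side is already in pure divergence form---which is covered by the $L_p$-theory from \cite{MR4345837}---by absorbing the lower-summability source $f$ into a solution of a constant-coefficient time fractional heat equation. The exponent $d+2/\alpha$ in the hypothesis appears naturally as the parabolic dimension for the scaling $(t,x)\mapsto(\lambda^{2/\alpha}t,\lambda x)$.

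First I would solve the auxiliary problem
\[
-\partial_t^\alpha v+\Delta v=f\quad\text{in }\bR^d_T,\qquad v(0,\cdot)=0,
\]
obtaining a unique $v\in\bH_{q,0}^{\alpha,2}(\bR^d_T)$ with
\[
\|v\|_{\bH_{q,0}^{\alpha,2}(\bR^d_T)}\leq N\|f\|_{L_q(\bR^d_T)},
\]
via the constant-coefficient $L_q$-theory for the time fractional heat equation (a special case of the results of \cite{MR4345837}, in which Assumption \ref{assum0808_2} is trivially satisfied by $a^{ij}=\delta^{ij}$). Next, I would invoke the parabolic Sobolev embedding in the time fractional scale, which under the hypothesis $\tfrac{1}{q}-\tfrac{1}{p}\leq\tfrac{1}{d+2/\alpha}$ yields
\[
\|v\|_{L_p(\bR^d_T)}+\|Dv\|_{L_p(\bR^d_T)}\leq N\|f\|_{L_q(\bR^d_T)}.
\]

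Setting $\tilde u:=u-v$ and using $\Delta v-\partial_t^\alpha v=f$ to eliminate $f$, a direct computation shows that $\tilde u$ satisfies the pure-divergence equation
\[
-\partial_t^\alpha\tilde u+D_i(a^{ij}D_j\tilde u)=D_i\tilde g_i,\qquad\tilde g_i:=g_i+(\delta^{ij}-a^{ij})D_j v,
\]
with $\tilde g_i\in L_p(\bR^d_T)$ and $\|\tilde g_i\|_{L_p(\bR^d_T)}\leq N(\|g_i\|_{L_p(\bR^d_T)}+\|f\|_{L_q(\bR^d_T)})$, since the $a^{ij}$ are bounded. Because the coefficients $a^{ij}$ still obey Assumption \ref{assum0808_2}, the $L_p$-solvability from \cite{MR4345837} for divergence form equations with zero $f$ produces a unique $\tilde u$ with $\tilde u,D\tilde u\in L_p(\bR^d_T)$ and the matching estimate. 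The decomposition $u=v+\tilde u$ then delivers existence together with \eqref{eq1113_04}; uniqueness in the class $\{u,Du\in L_p\}$ follows by applying the corresponding $L_p$-uniqueness to the difference of two solutions, which solves the homogeneous equation.

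The main obstacle is establishing the parabolic Sobolev embedding used in the second step. Concretely, one needs a Hardy-Littlewood-Sobolev type inequality for the solution operator of $-\partial_t^\alpha+\Delta$ on $\bR^d_T$ with $v(0,\cdot)=0$: it must send $L_q$ into $L_p\cap W^{1,p}$ whenever $\tfrac{1}{q}-\tfrac{1}{p}\leq\tfrac{1}{d+2/\alpha}$. This follows from convolution estimates against the fundamental solution $\Gamma_\alpha(t,x)$ of the time fractional heat equation, whose spatial derivatives admit self-similar bounds of the form $t^{-\alpha(d+k)/2}\Phi(|x|t^{-\alpha/2})$, so that the correct parabolic dimension is $d+2/\alpha$. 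Once this mapping property is in hand, the splitting and estimates above are routine.
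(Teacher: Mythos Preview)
Your splitting argument is correct and genuinely different from the paper's route. The paper proceeds by \emph{duality}: it approximates $f$ by $f^k\in L_p\cap L_q$, solves for $u^k\in\cH_{p,0}^{\alpha,1}(\bR^d_T)$ using \cite[Theorem 2.2]{MR4345837}, and then, for arbitrary test data $\phi_i,\psi$, solves the \emph{adjoint} equation in $\cH_{p',0}^{\alpha,1}$ and applies the embedding $\cH_{p',0}^{\alpha,1}\hookrightarrow L_{q'}$ from \cite[Theorem 7.5]{MR4345837} to obtain the a priori bound on $u^k$; passing to the limit finishes. Your approach instead absorbs $f$ into an auxiliary constant-coefficient solution $v\in\bH_{q,0}^{\alpha,2}$ and uses the embedding on the primal side, then reduces to the pure-divergence $L_p$ theory. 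Both arguments ultimately rest on the same Sobolev embedding for time fractional spaces, just invoked with dual exponents.

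Two remarks. First, the embedding you flag as the ``main obstacle'' is not an obstacle: it is exactly \cite[Theorem 7.5]{MR4345837} (and Corollary 7.6 there), which gives $\cH_{q,0}^{\alpha,1}\hookrightarrow L_p$ under $\tfrac{1}{q}-\tfrac{1}{p}\le\tfrac{1}{d+2/\alpha}$. Since $v\in\bH_{q,0}^{\alpha,2}$ implies $v\in\cH_{q,0}^{\alpha,1}$ and $D_k v\in\cH_{q,0}^{\alpha,1}$ (cf.\ \cite[Lemma 3.2]{MR4345837}), you get $v,Dv\in L_p$ directly---no fundamental-solution estimates are needed. Second, your approach is more constructive and avoids the approximation step $f^k\to f$, at the price of invoking the non-divergence $\bH_{q,0}^{\alpha,2}$ solvability for the Laplacian; the paper's duality argument stays entirely within the divergence-form framework.
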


\begin{proof}
Find a sequence $\{f^k\}$ such that
$f^k \in L_p \cap L_q(\bR^d_T)$ and $f_k \to f$ in $L_q(\bR^d_T)$.
By using \cite[Theorem 2.2]{MR4345837} we find $u^k \in \cH_{p,0}^{\alpha,1}(\bR^d_T)$ satisfying
$$
- \partial_t^\alpha u^k + D_i \left(a^{ij} D_j u^k \right) = D_i g_i + f^k
$$
in $\bR^d_T$.
For $\phi_i, \psi \in C_0^\infty(\bR^d_T)$, $i=1,\ldots,d$, using \cite[Theorem 2.2]{MR4345837} again, find $w \in \cH_{p',0}^{\alpha,1}\left((-T,0) \times \bR^d\right)$, $1/p + 1/p' = 1$, satisfying
$$
- \partial_t^{\alpha} w + D_i \left( a^{ji}(-t,x_1) D_j w \right) = D_i \left(-\phi_i(-t,x) \right) + \psi(-t,x)
$$
in $(-T,0) \times \bR^d$, where $\partial_t^\alpha = \partial_t I_{-T}^{1-\alpha}$, with the estimate
\begin{equation}
							\label{eq1113_01}
\|w\|_{\cH_{p',0}^{\alpha,1}\left((-T,0) \times \bR^d\right)} \leq N \|\phi_i\|_{L_{p'}\left((-T,0) \times \bR^d\right)} + N \|\psi\|_{L_{p'}\left((-T,0) \times \bR^d\right)}.
\end{equation}
Since
$$
1 - \frac{d+2/\alpha}{p'} \geq - \frac{d+2/\alpha}{q'},
$$
by \cite[Theorem 7.5]{MR4345837} it follows that
\begin{equation}
							\label{eq1113_02}
\|w\|_{L_{q'}\left((-T,0) \times \bR^d\right)} \leq N\|w\|_{\cH_{p',0}^{\alpha,1}\left((-T,0) \times \bR^d\right)},
\end{equation}
where $N = N(d,\alpha,p,q)$.
Then, by proceeding as in the proof of \cite[Theorem 2.1]{MR4030286} we arrive at
\begin{align*}
&\int_0^T\int_{\bR^d} \left( \phi_i D_i u^k + \psi u^k \right) \, dx \, dt
\\
&= \int_0^T \int_{\bR^d} \left( f^k(t,x) w(-t,x) - g_i(t,x) D_iw(-t,x)\right) \, dx \, dt
\\
&\leq \|f^k\|_{L_q} \|w\|_{L_{q'}} + \|g_i\|_{L_p} \|Dw\|_{L_{p'}},
\end{align*}
where
$L_q, L_p = L_q, L_p(\bR^d_T)$ and $L_{q'}, L_{p'} = L_{q'}, L_{p'}((-T,0) \times \bR^d)$.
Combining the above inequality with \eqref{eq1113_01} and \eqref{eq1113_02} shows that
$$
\|u^k\|_{L_p(\bR^d_T)} + \|D u^k\|_{L_p(\bR^d_T)} \leq N \|g_i\|_{L_p(\bR^d_T)} + N\|f^k\|_{L_q(\bR^d_T)}.
$$
By this inequality along with the fact that $\|I^{1-\alpha}_0 u^k \|_{L_p} \leq N \|u^k\|_{L_p}$ (see \cite[Remark A.3]{MR3899965}) and $f_k \to f$ in $L_q(\bR^d_T)$, we see that there exists a function $u$ on $\bR^d_T$ such that $u, Du \in L_p(\bR^d_T)$ and $u$ satisfies \eqref{eq1113_03} as well as \eqref{eq1113_04}.
The proposition is proved.
\end{proof}

\begin{remark}
							\label{rem1117_1}
In Proposition \ref{prop1113_1} if $g_i \in L_q \cap L_p (\bR^d_T)$, then we also have $u \in \cH_{q,0}^{\alpha,1}(\bR^d_T)$ by \cite[Theorem 2.2]{MR4345837} with $q$.
In particular, $\partial_t^\alpha u \in \bH_q^{-1}(\bR^d_T)$, but we do not necessarily have $\partial_t^\alpha u \in \bH_p^{-1}(\bR^d_T)$ because $f \in L_q(\bR^d_T)$.
\end{remark}

The proof of Lemma \ref{lem1113_1} below employs a similar iteration argument as in the proof of \cite[Lemma 4.2]{MR2764911} for parabolic equations with the local time derivative.
However, in each iteration step presented here we verify that the solution $v$ belongs to $\cH_{p_j,0}^{\alpha,1}$, as we need to apply an embedding result that holds only in this space.
Specifically, if we follow the proof of \cite[Lemma 4.2]{MR2764911}, we know that $D_{x'}u, u \in L_{p_1}$, but $\|D_1u\|_{p_1}$ can be controlled by $\|D_{x'}u\|_{p_1}$ only when $u \in \cH_{p_1,0}^{\alpha,1}$ is a priori known.

Recall that $v \in \cH_{p,0}^{\alpha,1}\left((0,T) \times B_R\right)$ is said to satisfy
\[
-\partial_t^\alpha v + D_i \left(a^{ij} D_j v \right) = D_i g_i + f
\]
in $(0,T) \times B_R$ if
\begin{equation*}
\int_0^T\int_{B_R}\left( I^{1-\alpha} v \, \varphi_t \, dx \, dt - a^{ij}D_jv D_i \varphi \right) \, dx \, dt = \int_0^T\int_{B_R} (f \varphi - g_i D_i \varphi) \, dx \, dt
\end{equation*}
for any $\varphi \in C_0^\infty\left([0,T) \times B_R\right)$.
In particular, $\varphi(0,x)$ is not necessarily zero.

\begin{lemma}
							\label{lem1113_1}
Let $\alpha \in (0,1)$, $p_0 \in (1,\infty)$, $T \in (0,\infty)$, $0<r<R<\infty$, and $a^{ij}$ satisfy Assumption \ref{assum0808_2}.
Suppose that $v \in \cH_{p_0,0}^{\alpha,1}\left((0,T) \times B_R\right)$ satisfies
\begin{equation}
							\label{eq1113_05}
-\partial_t^\alpha v + D_i\left(a^{ij}D_j v\right) = 0
\end{equation}
in $(0,T) \times B_R$.
Then, $v \in \cH_{p_1,0}^{\alpha,1}\left((0,T) \times B_r\right)$ for any $p_1 \in (1,\infty)$.
Moreover, for any multi-index $\beta = (\beta_2,\ldots,\beta_d)$ of the order $|\beta|=1,2,\ldots$, the function $v_\beta := D_{x'}^{\beta} v$ belongs to $\cH_{p_1,0}^{\alpha,1}\left((0,T) \times B_r\right)$ and satisfies
\begin{equation}
							\label{eq0202_01}
-\partial_t^\alpha v_\beta + D_i(a^{ij}D_jv_\beta) = 0
\end{equation}
in $(0,T) \times B_r$.
Furthermore, for any $t_0 \leq T$, $0 < \nu < \mu$, and a sequence $\{s_k\}$ satisfying \eqref{eq0212_04} in Appendix \ref{appendix1}, we have
\begin{equation}
							\label{eq1113_09}
\begin{aligned}
&\|Dv_\beta\|_{L_{p_1}((t_0-\nu,t_0) \times B_{r/2})}\\
&\leq \frac{N}{r} \|v_\beta\|_{L_{p_1}\left((t_0-\mu,t_0) \times B_r\right)} + N r \frac{\mu^{1-\alpha}}{\mu-\nu} \|v_\beta\|_{L_{p_1}\left((t_0-2\mu, t_0-\nu) \times B_r \right)}
\\
&\quad + N_1 r \sum_{k=1}^\infty s_k^{-\alpha-1}(s_{k+1}-s_k)^{1-\frac{1}{p_1}} \mu^{\frac{1}{p_1}} \left( \int_{t_0-s_{k+1}}^{t_0-s_k} \int_{B_r}|v_\beta(t,x)|^{p_1} \, dx \, dt\right)^{1/p_1},
\end{aligned}
\end{equation}
where $N = N(d,\delta,\alpha,p_1)$, $N_1=N(d,\delta,\alpha,p_1,N_0)$ (recall $N_0$ from \eqref{eq0212_04}), and $v_\beta$ denotes the zero extension of $v_\beta$ for $t \leq 0$.
\end{lemma}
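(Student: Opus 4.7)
I proceed in three stages: (1) tangential regularity of $v$; (2) bootstrap of integrability; (3) derivation of \eqref{eq1113_09}.

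(1) Since $a^{ij}=a^{ij}(t,x_1)$ does not depend on $x'$, equation \eqref{eq1113_05} is invariant under translations in the $x'$-directions. Applying finite-difference quotients $\delta_{x_\ell,h}v$ for $\ell=2,\ldots,d$ and invoking the interior a priori bound for solutions in $\cH_{p_0,0}^{\alpha,1}$ (a consequence of \cite[Theorem 2.2]{MR4345837}), I obtain uniform-in-$h$ control of $\delta_{x_\ell,h}v$ in $\cH_{p_0,0}^{\alpha,1}((0,T)\times B_{R'})$ for any $R'<R$. Sending $h\to 0$ and iterating, $v_\beta:=D_{x'}^\beta v \in \cH_{p_0,0}^{\alpha,1}((0,T)\times B_{R'})$ and satisfies \eqref{eq0202_01}.

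(2) Choose a strictly decreasing sequence of radii $R>r_1>r_2>\cdots>r$ and a strictly increasing sequence of exponents $p_0<p^{(1)}<p^{(2)}<\cdots$ with $1/p^{(j+1)} = 1/p^{(j)} - 1/(d+2/\alpha)$, stopping once $p^{(j)}\geq p_1$. At each step I localize with a spatial cutoff $\phi_j$ supported between $B_{r_j}$ and $B_{r_{j+1}}$; the cut-off $\phi_j v_\beta$ solves a divergence-form equation whose $g$-source $\sim v_\beta\,D\phi_j$ lies in $L_{p^{(j+1)}}$ by the parabolic Sobolev embedding \cite[Theorem 7.5]{MR4345837} applied to $v_\beta\in \cH_{p^{(j)},0}^{\alpha,1}$, while the $f$-source $\sim Dv_\beta\cdot D\phi_j$ only lies in $L_{p^{(j)}}$. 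Proposition \ref{prop1113_1}, designed precisely for right-hand sides of mixed summability, then upgrades $\phi_j v_\beta$ to $\cH_{p^{(j+1)},0}^{\alpha,1}$. Because all tangential derivatives of $v_\beta$ also solve \eqref{eq0202_01}, the procedure iterates unimpeded by the lack of $x_1$-regularity, and after finitely many steps $v_\beta \in \cH_{p_1,0}^{\alpha,1}((0,T)\times B_r)$.

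(3) Fix a spatial cutoff $\zeta\in C_0^\infty(B_r)$ with $\zeta\equiv 1$ on $B_{r/2}$, $|D\zeta|\leq N/r$, and a temporal cutoff $\eta\in C^\infty(\bR)$ with $\eta\equiv 1$ on $[t_0-\nu,t_0]$, supported in $(t_0-\mu,t_0]$, and $|\eta'|\leq N/(\mu-\nu)$. The product $w:=\eta\zeta v_\beta$, extended by zero for $t\leq t_0-2\mu$, satisfies
\[
-\partial_t^\alpha w + D_i(a^{ij}D_j w) = D_i G_i + F + \cT
\]
on $(t_0-2\mu,t_0)\times B_r$ with $\partial_t^\alpha$ of origin $t_0-2\mu$, where $G_i$ and $F$ collect the commutators of $\zeta,\eta$ with the operator, and
\[
\cT(t,x) = \frac{1}{\Gamma(1-\alpha)}\,\partial_t \int_0^{t_0-2\mu}(t-s)^{-\alpha}\zeta(x)v_\beta(s,x)\,ds
\]
compensates the shift of origin of $\partial_t^\alpha$ from $0$ to $t_0-2\mu$. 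Applying Proposition \ref{prop1113_1} to $w$ yields \eqref{eq1113_09}: the first term arises from $\|D\zeta\,v_\beta\|_{L_{p_1}}$; the second from the time-cutoff commutator via the Riemann--Liouville bound $\|I^{1-\alpha}(\eta' v_\beta)\|_{L_{p_1}}\leq N\mu^{1-\alpha}\|\eta' v_\beta\|_{L_{p_1}}$; the series term from bounding $\cT$ after partitioning the prehistory $(0,t_0-2\mu)$ into the shells $(t_0-s_{k+1},t_0-s_k)$ prescribed by Appendix \ref{appendix1} and applying H\"older's inequality against the singular kernel $(t-s)^{-\alpha-1}$. The main obstacle is precisely this treatment of $\cT$: the non-locality of $\partial_t^\alpha$ forbids a clean time truncation, so the history beyond the cylinder survives as a tail integrated against $(t-s)^{-\alpha-1}$; the sequence $\{s_k\}$ of Appendix \ref{appendix1} is engineered so that the shell widths $s_{k+1}-s_k$ balance the singular weights $s_k^{-\alpha-1}$, producing the summable series in \eqref{eq1113_09}.
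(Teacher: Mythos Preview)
Your steps (1) and (2) are essentially the paper's argument (with the harmless reordering of doing tangential differentiation before the integrability bootstrap), and they are correct. The trouble is in step (3).

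First, by introducing a \emph{spatial} cutoff $\zeta$ and applying Proposition~\ref{prop1113_1} to $w=\eta\zeta v_\beta$, you produce an undivergence-form commutator $F \supset \eta\, a^{ij}D_j v_\beta\, D_i\zeta$, whose $L_{p_1}$-norm contributes a term of order $r^{-1}\|Dv_\beta\|_{L_{p_1}((t_0-\mu,t_0)\times(B_r\setminus B_{r/2}))}$ to the right-hand side. No such term appears in \eqref{eq1113_09}, and you do not explain how it is absorbed. The paper avoids this: it uses \emph{only} the temporal cutoff $\eta$ and applies the interior Caccioppoli-type estimate \cite[Lemma~4.3]{MR4030286} directly to $\eta v_\beta$ on $(t_0-\mu,t_0)\times B_r$, which already yields
\[
\|D(\eta v_\beta)\|_{L_{p_1}((t_0-\mu,t_0)\times B_{r/2})}
\le \tfrac{N}{r}\|v_\beta\|_{L_{p_1}((t_0-\mu,t_0)\times B_r)}
+ Nr\,\|\cG_\beta\|_{L_{p_1}((t_0-\mu,t_0)\times B_r)},
\]
with the correct $r$-scaling and no stray $Dv_\beta$ term. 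Proposition~\ref{prop1113_1} is not the right tool here.

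Second, your identification of the temporal commutator is incorrect. The commutator of $\eta$ with $\partial_t^\alpha$ is \emph{not} $I^{1-\alpha}(\eta' v_\beta)$; it is the kernel expression
\[
\cG_\beta(t,x)=\frac{\alpha}{\Gamma(1-\alpha)}\int_{-\infty}^t(t-s)^{-\alpha-1}\bigl(\eta(s)-\eta(t)\bigr)v_\beta(s,x)\,ds
\]
of \eqref{eq0212_01}, and it is \emph{this} single object that Lemma~\ref{lem0212_1} splits into a near part (giving the $\frac{\mu^{1-\alpha}}{\mu-\nu}\|v_\beta\|_{L_{p_1}((t_0-2\mu,t_0-\nu)\times B_r)}$ term---note the lower limit $t_0-2\mu$, which your $I^{1-\alpha}(\eta' v_\beta)$ bound would not produce) and a tail part (the summable series). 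Your separate ``origin-shift'' term $\cT$ and ``$\eta'$-commutator'' are an artificial decomposition of $\cG_\beta$, and the Riemann--Liouville bound you quote does not match the second term of \eqref{eq1113_09}. The paper's route---zero-extend $v_\beta$, shift origin to $S=\min\{t_0-\mu,0\}$ via \cite[Lemma~3.3]{MR4030286}, multiply by $\eta$ via \cite[Lemma~3.4]{MR4030286} to get right-hand side $\cG_\beta$, then invoke the interior estimate and Lemma~\ref{lem0212_1}---is what you need.
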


\begin{proof}
To prove the first assertion in the lemma, it suffices to consider $p_1 \in (p_0,\infty)$.
Take $p_1 \in (p_0, \infty)$ such that
\begin{equation}
							\label{eq1113_10}
1 - \frac{d+2/\alpha}{p_0} \geq - \frac{d+2/\alpha}{p_1}.
\end{equation}
Fix $R_1$ and $R_2$ such that $0 < r < R_1 < R_2 < R$.
Since $v \in \cH_{p_0,0}^{\alpha,1}\left((0,T) \times B_R\right)$, by \cite[Corollary 7.6]{MR4345837} we have $v \in L_{p_1}\left((0,T) \times B_{R_1}\right)$ with the inequality
\begin{equation}
							\label{eq0405_01}
\|u\|_{L_{p_1}((0,T) \times B_{R_1})} \leq N \|u\|_{\cH_{p_0,0}^{\alpha,1}((0,T) \times B_{R_2})}.
\end{equation}
Let $\phi(x)$ be an infinitely differentiable function defined on $\bR^d$ such that $\phi(x) = 1$ on $B_r$ and $\phi(x) = 0$ on $\bR^d \setminus B_{R_1}$.
We see that $\phi v$ belongs to $\cH_{p_0,0}^{\alpha,1}(\bR^d_T)$ and satisfies
$$
-\partial_t^\alpha (\phi v) + D_i\left(a^{ij} D_j (\phi v) \right) = D_i \left( a^{ij} v D_j \phi \right) + a^{ij} D_j v D_i \phi
$$
in $\bR^d_T$, where $a^{ij}v D_j \phi \in L_{p_0} \cap L_{p_1}(\bR^d_T)$ and $a^{ij}D_j v D_i \phi \in L_{p_0}(\bR^d_T)$.
Then, by Proposition \ref{prop1113_1} (also see Remark \ref{rem1117_1}) and \eqref{eq0405_01} it follows that $\phi v, D(\phi v) \in L_{p_1}(\bR^d_T)$ with the estimate
\begin{equation}
							\label{eq1113_07}
\begin{aligned}
&\||v| + |Dv|\|_{L_{p_1}((0,T) \times B_r)} \leq \|v D\phi\|_{L_{p_1}(\bR^d_T)} + \|Dv D\phi\|_{L_{p_0}(\bR^d_T)}
\\
&\leq N \|v\|_{L_{p_1}((0,T) \times B_{R_1})} + N \|Dv\|_{L_{p_0}((0,T) \times B_{R_1})} \leq N \|v\|_{\cH_{p_0,0}^{\alpha,1}((0,T) \times B_{R_2})}
\\
& \leq N \|v\|_{L_{p_0}((0,T) \times B_{R_2})} + N\|Dv\|_{L_{p_0}((0,T) \times B_{R_2})} \leq N \|v\|_{L_{p_0}((0,T) \times B_R)},
\end{aligned}
\end{equation}
where
in the forth inequality we used the equation \eqref{eq1113_05} to bound $\|\partial_t^\alpha v\|_{\bH_{p_0}^{-1}((0,T) \times B_{R_2})}$ by $\|Dv\|_{L_{p_0}((0,T) \times B_{R_2})}$.
The last inequality in \eqref{eq1113_07} follows from \cite[Lemma 4.3]{MR4030286} along with \cite[Theorem 2.2]{MR4345837}.
Hence, from \eqref{eq1113_07} and the equation \eqref{eq1113_05} we obtain that $v \in \cH_{p_1,0}^{\alpha,1}\left((0,T) \times B_r\right)$ and
\begin{equation*}
\|v\|_{\cH_{p_1,0}^{\alpha,1}\left((0,T) \times B_r\right)} \leq N \|v\|_{L_{p_0}((0,T) \times B_R)}.
\end{equation*}
Indeed, to check that $v \in \cH_{p_1,0}^{\alpha,1}\left((0,T) \times B_r\right)$,
we use the equation \eqref{eq1113_05} to see that $v \in L_{p_1}((0,T) \times B_r)$ satisfies the equality \eqref{eq0601_01} with $g_i$ replaced with $a^{ij}D_jv$ and $f = 0$ for all $\varphi \in C_0^\infty\left([0,T) \times B_r\right)$.

We complete the proof of the first assertion for arbitrary $p_1 \in (p_0,\infty)$ by repeating the above argument finitely many times to reach $p_1$.
In particular, in each step of the iteration it is required that $u \in \cH_{p_j,0}^{\alpha,1}((0,T) \times B_{r_j})$, $p_0 < p_j \leq p_1$, $r < r_j < R$, because the embedding (see \cite[Corollary 7.6]{MR4345837}) is for functions in such spaces.

The second assertion of the lemma is in fact a simplified version of \cite[Lemma 4.1]{MR4345837} with no cut-off function $\eta$ and the zero right-hand side.
To be more precise, we set
\[
\Delta_{\ell,h}v(t,x) = \frac{v(t,x+h e_\ell) - v(t,x)}{h}, \quad 0 < h < \frac{R-r}{2},
\]
where $e_\ell$ is the unit vector in the $x_\ell$-direction.
Using the first assertion proved above, we see that $\Delta_{\ell,h}v(t,x) \in \cH_{p_1,0}^{\alpha,1}\left((0,T) \times B_{R_1}\right)$, where $R_1 \in (0, (R+r)/2)$.
Because $a^{ij}$ are functions of only $(t,x_1)$, we also see that
\[
-\partial_t^\alpha \left(\Delta_{\ell,h}v\right) + D_i \left(a^{ij} D_j\left(\Delta_{\ell,h}v\right) \right) = 0
\]
in $(0,T) \times B_{R_1}$ for $\ell = 2,\ldots,d$.
Then, we obtain the assertion using the properties of $\Delta_{\ell,h}$ and \cite[Theorem 2.2]{MR4345837} as in the proof of \cite[Lemma 4.1]{MR4345837}.

Finally, we prove \eqref{eq1113_09}.
By \cite[Lemma 3.3]{MR4030286} the extended function $v_\beta$, which is zero for $t\leq 0$, belongs to $\cH_{p_1,0}^{\alpha,1}\left((S,t_0) \times B_r\right)$ for any $S \leq 0$.
In addition, $v_\beta$ satisfies \eqref{eq0202_01} in $(S,t_0) \times B_r$, where $\partial_t^\alpha v =  \partial_t I_S^{1-\alpha} v$.
Set $S = \min\{t_0-\mu,0\}$ and take $\eta(t)$ from \eqref{eq0212_07}.
Then by \cite[Lemma 3.4]{MR4030286}, $\eta v_\beta$ is in  $\cH_{p_1,0}^{\alpha,1}\left((t_0-\mu,t_0) \times B_r \right)$ and satisfies
\begin{equation}
							\label{eq0203_01}
-\partial_t^\alpha (\eta v_\beta) + D_i\left(a^{ij} D_j (\eta v_\beta) \right) = \cG_\beta(t,x)
\end{equation}
in $(t_0-\mu, t_0) \times B_r$, where $\cG_\beta$ is defined as in \eqref{eq0212_01} with $u$ replaced with $v_\beta$.
By applying \cite[Lemma 4.3]{MR4030286} along with \cite[Theorem 2.2]{MR4345837} to \eqref{eq0203_01}, we have
\begin{multline}
							\label{eq0213_01}
\|D(\eta v_\beta)\|_{L_{p_1}\left((t_0-\mu,t_0) \times B_{r/2}\right)} \\
\leq \frac{N}{r} \|v_\beta\|_{L_{p_1}\left((t_0-\mu,t_0) \times B_r\right)} + N r \|\cG_\beta\|_{L_{p_1}\left((t_0-\mu,t_0)\times B_r\right)},
\end{multline}
where $N = N(d,\delta,\alpha,p_1)$.
To take care of the $L_{p_1}$-norm of $\cG_\beta$, we use the argument, for instance, in the proof of \cite[Lemma 4.1]{MR4186022}.
For the reader's convenience and later usage, we present some details in Appendix \ref{appendix1}. That is, by Lemma \ref{lem0212_1} with $p=p_1$, $\Omega = B_r$, and $u = v_\beta$, we see that
\begin{align*}
&\|\cG_\beta\|_{L_{p_1}\left((t_0-\mu, t_0) \times B_r \right)} \leq N(\alpha) \frac{\mu^{1-\alpha}}{(\mu-\nu)} \|v_\beta\|_{L_{p_1}\left((t_0-2\mu, t_0-\nu) \times B_r \right)}\\
&\, + N(\alpha,N_0) \sum_{k=1}^\infty s_k^{-\alpha-1}(s_{k+1}-s_k)^{1-1/p_1} \mu^{1/p_1} \left( \int_{t_0-s_{k+1}}^{t_0-s_k} \int_{B_r}|v_\beta(t,x)|^{p_1} \, dx \, dt\right)^{1/p_1}.
\end{align*}
Then, we obtain \eqref{eq1113_09} from this and \eqref{eq0213_01} with the inequality
\[
\|Dv_\beta\|_{L_{p_1}\left((t_0-\nu,t_0) \times B_{r/2}\right)} \leq
\|D(\eta v_\beta)\|_{L_{p_1}\left((t_0-\mu,t_0) \times B_{r/2}\right)}.
\]
The lemma is proved.
\end{proof}

\begin{lemma}
							\label{lem1118_1}
Let $\alpha \in (0,1)$, $p_0 \in (1,\infty)$, $t_0 \in (0,\infty)$, $0<r<R<\infty$, and $a^{ij}$ satisfy Assumption \ref{assum0808_2}.
If $v \in \cH_{p_0,0}^{\alpha,1}\left((0,t_0) \times B_R\right)$ satisfies \eqref{eq1113_05} in $(0,t_0) \times B_R$, then for any $p \in (p_0,\infty)$ and $t_1 \leq t_0$, we have
\begin{equation}
							\label{eq1115_01}
\left(|D_{x'}v|^p\right)_{Q_{r/2}(t_1,0)}^{1/p} \leq N \sum_{j=1}^\infty j^{-(1+\alpha)} \left(|D_{x'}v|^{p_0}\right)^{1/p_0}_{Q_r(t_1-(j-1)r^{2/\alpha},0)},
\end{equation}
where $N = N(d,\delta,\alpha,p_0,p)$ and $v$ (here and below) denotes the zero extension of $v$ for $t \leq 0$.
Moreover,
\begin{equation}
							\label{eq1116_01}
\left[D_{x'}v\right]_{C^{\sigma \alpha/2, \sigma}(Q_{r/2}(t_1,0))} \leq N r^{-\sigma} \sum_{j=1}^\infty j^{-(1+\alpha)} \left(|D_{x'}v|^{p_0}\right)^{1/p_0}_{Q_r(t_1-(j-1)r^{2/\alpha},0)},
\end{equation}
where $\sigma = \sigma(d,\alpha,p_0) \in (0,1)$ and $N=N(d,\delta,\alpha,p_0)$.
If we additionally assume that $a^{ij} = a^{ij}(t)$ for all $i,j=1,\ldots,d$, then the inequalities \eqref{eq1115_01} and \eqref{eq1116_01} hold with $Dv$ replacing $D_{x'}v$ on both sides of the inequalities.
\end{lemma}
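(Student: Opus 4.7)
The plan is as follows. First I extend $v$ by zero for $t\le 0$; by \cite[Lemma 3.3]{MR4030286} the extension still belongs to $\cH_{p_0,0}^{\alpha,1}((S,t_0)\times B_R)$ for every $S\le 0$ and still satisfies the homogeneous equation \eqref{eq1113_05} there (with the fractional derivative based at $S$). This extension is what makes every term of the sum on the right of \eqref{eq1115_01} well defined: summands corresponding to $j$ with $t_1-(j-1)r^{2/\alpha}\le 0$ simply get no contribution from the original $v$.

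Next, since the coefficients are independent of $x'$, the second assertion of Lemma \ref{lem1113_1} gives that $v_\beta:=D_{x'}^\beta v\in\cH_{p_1,0}^{\alpha,1}((S,t_0)\times B_{r'})$ for every $p_1\in(1,\infty)$ and every multi-index $\beta$ with $|\beta|\ge 1$, and $v_\beta$ still satisfies the same homogeneous equation on a slightly shrunken cylinder. For $|\beta|=1$ I then apply \eqref{eq1113_09} with $\nu=(r/2)^{2/\alpha}$, $\mu\asymp r^{2/\alpha}$, and an admissible sequence $\{s_k\}$ chosen so that $s_k\asymp k\,r^{2/\alpha}$. With this calibration, the factor $r\,s_k^{-\alpha-1}(s_{k+1}-s_k)^{1-1/p_1}\mu^{1/p_1}$ is of order $k^{-(1+\alpha)}$ times the scaling that converts $L_{p_1}$-norms on cylinders of volume $\asymp r^{d+2/\alpha}$ into $L_{p_1}$-averages; the three groups on the right of \eqref{eq1113_09} then assemble into a single series, yielding the $L_{p_1}$-analogue of \eqref{eq1115_01} with $D_{x'}v$ on both sides.

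To upgrade the left-hand exponent from $p_1$ to arbitrary $p<\infty$ and downgrade the right-hand exponent from $p_1$ to $p_0$, I bootstrap: applying the same reasoning to $v_\beta$ of higher order $|\beta|$ controls all $x'$-derivatives of $D_{x'}v$ in $L_{p_1}$ on successively smaller cylinders, and the embedding in \cite[Corollary 7.6]{MR4345837} raises $p_1$ at each step. After finitely many iterations on a shrinking family of cylinders one reaches $L_\infty$-control of $D_{x'}v$ on $Q_{r/2}(t_1,0)$, which dominates every $L_p$-average and thus yields \eqref{eq1115_01}. The Hölder estimate \eqref{eq1116_01} follows by invoking an anisotropic parabolic Morrey-type embedding once $L_p$-bounds on further $x'$-derivatives of $D_{x'}v$ are in hand, with the factor $r^{-\sigma}$ arising by rescaling $Q_{r/2}(t_1,0)$ to unit size. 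Finally, when $a^{ij}=a^{ij}(t)$, the equation is also invariant under $x_1$-differentiation, so the whole argument applies to every $D^\beta v$ rather than only $D_{x'}^\beta v$, and $D$ may replace $D_{x'}$ throughout.

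The main obstacle is the calibration in the second paragraph: the parameters $(\mu,\nu,\{s_k\})$ in \eqref{eq1113_09} must be chosen so that the first two terms (which are geometric in $\mu$) and the non-local tail series collapse into one clean sum weighted by $j^{-(1+\alpha)}$, with constants uniform in $t_1$ and $r$. Once this reduction is in place, the bootstrap in $p$ and the passage to Hölder control via embedding are essentially routine.
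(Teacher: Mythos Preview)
Your overall scaffolding is close to the paper's, but the core mechanism is mis-identified and leaves a genuine gap. The estimate \eqref{eq1113_09} is a Caccioppoli-type inequality: for $|\beta|=1$ it bounds $\|Dv_\beta\|_{L_{p_1}}=\|DD_{x'}v\|_{L_{p_1}}$ by the tail sum of $\|v_\beta\|_{L_{p_1}}=\|D_{x'}v\|_{L_{p_1}}$, with the \emph{same} exponent on both sides (and an extra factor $r^{-1}$). It therefore cannot produce ``the $L_{p_1}$-analogue of \eqref{eq1115_01}'' in any useful sense; the whole content of \eqref{eq1115_01} is the jump from $p_0$ on the right to $p$ on the left. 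That jump must come from the embedding \cite[Corollary 7.6]{MR4345837}, and to invoke it with the localized tail structure you need the full $\cH_{q,0}^{\alpha,1}$-norm of $D_\ell v$ (equivalently, of $D_\ell(\eta v)$) on a time-localized cylinder with zero initial datum. \eqref{eq1113_09} supplies only the $\|D(\eta v_\beta)\|_{L_{p_1}}$ piece of that norm, and on $(t_1-\nu,t_1)\times B_{r/2}$ the function $D_{x'}v$ does not vanish at the initial time, so the embedding as stated does not apply. The paper closes this gap by appealing to \cite[Lemma 4.1]{MR4345837}, which directly yields $D_\ell(\eta v)\in \cH_{q,0}^{\alpha,1}((t_1-1,t_1)\times B_{3/4})$ together with the bound \eqref{eq1117_01}; the embedding is then applied to $D_\ell(\eta v)$ on that cylinder, and the iteration in $p$ proceeds as in \cite[Proposition 4.3]{MR4186022}.

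There is a second gap in your H\"older argument. The seminorm in \eqref{eq1116_01} is $C^{\sigma\alpha/2,\sigma}$, i.e.\ H\"older in both $t$ and all of $x$. Controlling ``further $x'$-derivatives of $D_{x'}v$'' in $L_p$ would at best yield H\"older regularity in $x'$; it gives nothing in $t$ or $x_1$. The paper instead applies the embedding \cite[Corollary 7.4]{MR4345837} to $D_\ell(\eta v)\in\cH_{p_0,0}^{\alpha,1}$ once $p_0$ is large enough that $1-(d+2/\alpha)/p_0>0$ (reaching that regime via the $p$-iteration just described), and this embedding delivers the full parabolic H\"older seminorm directly. Your final remark on the $a^{ij}=a^{ij}(t)$ case is correct in spirit and matches the paper.
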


\begin{proof}
Due to scaling, it suffices to consider $r=1$.
Since $v \in \cH_{p_0,0}^{\alpha,1}((0,t_0)\times B_R)$, it is easy to see that the extension of $v$ as zero for $t \leq 0$ satisfies
\begin{equation}
							\label{eq1118_01}
- \partial_t^\alpha v + D_i\left(a^{ij}D_j v \right) = 0
\end{equation}
in $(S,t_0) \times B_R$  for any $S \leq 0$, where $\partial_t^\alpha v =  \partial_t I_S^{1-\alpha} v$.
Indeed, the extended $v$ satisfies \eqref{eq0601_01} with $\Omega_T$ replaced with $(S,t_0)\times B_R$, $g_i = a^{ij}D_j v$, and $f = 0$ for any $\varphi \in C_0^\infty([S,t_0) \times B_R)$.
Hence, the extended $v$ belongs to $\cH_{p_0,0}^{\alpha,1}\left((S,t_0) \times B_R\right)$ for any $S \leq 0$.
Moreover, by Lemma \ref{lem1113_1}, we have $v \in \cH_{q,0}^{\alpha,1}\left((S,t_0) \times B_1\right)$ for any $q \in (1,\infty)$.
Take $\eta(t)$ from \eqref{eq0212_07} with $t_0$ replaced with $t_1$ and $\mu = 1$, $\nu = (1/2)^{2/\alpha}$.
Then, by choosing $S \leq t_1 - 1$ and using \cite[Lemma 4.1]{MR4345837} with $t_1$ and $t_1-1$ in places of $T$ and $t_0$, respectively, it follows that $D_\ell(\eta v) \in \cH_{q,0}^{\alpha,1}((t_1-1,t_1) \times B_{3/4})$, $\ell = 2,\ldots,d$, and
\begin{equation}
							\label{eq1117_01}
\begin{aligned}
\|D_{\ell}(\eta v)\|_{\cH_{q,0}^{\alpha,1}((t_1-1,t_1) \times B_{3/4})} &\leq N\|D_{\ell} (\eta v)\|_{L_q(Q_1(t_1,0))} + N\|\cG_\ell\|_{L_q(Q_1(t_1,0))}
\\
&\le N \sum_{j=1}^\infty j^{-(1+\alpha)} \left(|D_\ell v|^q\right)^{1/q}_{Q_1\left(t_1-(j-1),0\right)},
\end{aligned}
\end{equation}
where $N = N(d,\delta,\alpha,q)$ and $\cG_\ell$ is defined as in \eqref{eq0212_01} with $u$ replaced with $D_\ell v$.
In particular, we obtain the second inequality in \eqref{eq1117_01} using Lemma \ref{lem0212_1} with $p=q$, $\Omega = B_1$, $s_k = k$, $u = D_\ell v$, $N_0=2$, and $\mu = 1$, $\nu = (1/2)^{2/\alpha}$.

We now prove \eqref{eq1115_01}.
Find $p_1 \in (p_0,\infty)$ satisfying \eqref{eq1113_10}.
Since $D_\ell(\eta v) \in \cH_{p_0,0}^{\alpha,1}((t_1-1,t_1) \times B_{3/4})$, by \cite[Corollary 7.6]{MR4345837} we have
$$
\begin{aligned}
\|D_\ell v\|_{L_{p_1}(Q_{1/2}(t_1,0))} &\leq \|D_\ell (\eta v)\|_{L_{p_1}((t_1-1,t_1) \times B_{1/2})}
\\
&\leq N \|D_\ell(\eta v)\|_{\cH_{p_0,0}^{\alpha,1}((t_1-1,t_1) \times B_{3/4})},
\end{aligned}
$$
which combined with \eqref{eq1117_01} with $q = p_0$ proves
$$
\left(|D_{x'}v|^{p_1}\right)_{Q_{1/2}(t_1,0)}^{1/p_1} \leq N \sum_{j=1}^\infty j^{-(1+\alpha)} \left(|D_{x'}v|^{p_0}\right)^{1/p_0}_{Q_1\left(t_1-(j-1),0\right)}.
$$
If $p_1 \geq p$, we arrive at \eqref{eq1115_01} with $r=1$. If not, by performing the iteration process as in the proof of \cite[Proposition 4.3]{MR4186022}, we eventually arrive at \eqref{eq1115_01}.

To prove \eqref{eq1116_01}, we first assume that $1 - (d+2/\alpha)/p_0 =:\sigma > 0$.
By the fact that $D_\ell(\eta v) \in \cH_{p_0,0}^{\alpha,1}((t_1-1,t_1) \times B_{3/4})$, \cite[Corollary 7.4]{MR4345837} shows that
\begin{equation*}
\begin{aligned}
\left[D_\ell v\right]_{C^{\sigma \alpha/2, \sigma}(Q_{1/2}(t_1,0))} &\leq \left[D_\ell(\eta v)\right]_{C^{\sigma \alpha/2, \sigma}((t_1 - 1, t_1) \times B_{1/2})}
\\
&\leq N \|D_\ell(\eta v)\|_{\cH_{p_0,0}^{\alpha,1}((t_1-1,t_1) \times B_{3/4})}.
\end{aligned}
\end{equation*}
From this and \eqref{eq1117_01} with $q=p_0$ we obtain \eqref{eq1116_01} with $r=1$.
If $1-(d+2/\alpha)/p_0 \leq 0$, by repeating the above argument, we prove \eqref{eq1116_01} with a sufficiently large $p_1$ replacing $p_0$ on the right-hand side so that $1-(d+2/\alpha)/p_1 > 0$.
Then, the right-hand side of \eqref{eq1116_01} is estimated by that of \eqref{eq1115_01} through the iteration process depicted in the proof of \cite[Proposition 4.3]{MR4186022}.

If $a^{ij} = a^{ij}(t)$, we repeat the above proof using the corresponding assertion in \cite[Lemma 4.1]{MR4345837}.
The lemma is proved.
\end{proof}

Denote
$$
V = \sum_{j=1}^d a^{1j} D_jv.
$$
To deal with equations the coefficients of which satisfy Assumption \ref{assum0808_2} (ii), we need the following lemma for equations with coefficients $a^{ij} = a^{ij}(x_1)$.
To utilize results from \cite{MR4030286}, we further assume that $a^{ij}(x_1)$ are infinitely differentiable with bounded derivatives.
However, this restriction is harmless because the estimates we obtain below are independent of the smoothness of $a^{ij}$ as in \cite{MR4030286}.
Recall that all the coefficients $a^{ij}$ in this paper satisfy the ellipticity condition \eqref{eq0310_01}, so do the coefficients $a^{ij}$ in the lemma below.

\begin{lemma}[$a^{ij}=a^{ij}(x_1)$ case]
							\label{lem0201_1}
Let $\alpha \in (0,1)$, $p_0 \in (1,\infty)$, $t_0 \in (0,\infty)$, $0<r<R<\infty$, and $a^{ij} = a^{ij}(x_1)$ be infinitely differentiable with bounded derivatives.
If $v \in \cH_{p_0,0}^{\alpha,1}\left((0,t_0) \times B_R\right)$ satisfies \eqref{eq1113_05}
in $(0,t_0) \times B_R$, then for any $p \in (p_0,\infty)$ and $t_1 \leq t_0$, we have
\begin{equation}
							\label{eq1119_01}
\left(|Dv|^p\right)_{Q_{r/2}(t_1,0)}^{1/p} \leq N \sum_{j=1}^\infty j^{-(1+\alpha)} \left(|Dv|^{p_0}\right)^{1/p_0}_{Q_r(t_1-(j-1)r^{2/\alpha},0)},
\end{equation}
where $N = N(d,\delta,\alpha,p_0,p)$ and $v$ (here and below) denotes the zero extension of $v$ for $t \leq 0$.
Moreover,
\begin{equation}
							\label{eq1119_02}
\left[V\right]_{C^{\sigma\alpha/2,\sigma}(Q_{r/2}(t_1,0))} \leq N r^{-\sigma} \sum_{j=1}^\infty j^{-(1+\alpha)} \left(|Dv|^{p_0}\right)^{1/p_0}_{Q_r(t_1-(j-1)r^{2/\alpha},0)},
\end{equation}
where $\sigma = \sigma(d,\alpha,p_0) \in (0,1)$ and $N=N(d,\delta,\alpha,p_0)$.
\end{lemma}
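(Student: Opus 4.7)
The plan is to mirror the proof of Lemma~\ref{lem1118_1}, replacing the interior regularity results tailored to $a^{ij}(t,x_1)$ by their $a^{ij}(x_1)$-counterparts from \cite{MR4030286}. First I would parabolically rescale $(t,x)\mapsto(r^{2/\alpha}t,rx)$ to reduce to $r=1$; this sends $a^{ij}(x_1)$ to $a^{ij}(rx_1)$, preserving the ellipticity bounds and the $x_1$-only dependence. Extending $v$ by zero for $t\le 0$ gives a solution of \eqref{eq1113_05} on $(S,t_0)\times B_R$ for every $S\le 0$ in $\cH_{p_0,0}^{\alpha,1}$, and Lemma~\ref{lem1113_1} (which applies since Assumption~\ref{assum0808_2}(ii) covers our setting) lifts this to $\cH_{q,0}^{\alpha,1}((S,t_0)\times B_1)$ for every $q\in(1,\infty)$.

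Next I would introduce the temporal cutoff $\eta(t)$ from \eqref{eq0212_07} (with $t_1$ in place of $t_0$, $\mu=1$, $\nu=(1/2)^{2/\alpha}$) and take $S\le t_1-1$. By \cite[Lemma 3.4]{MR4030286}, $\eta v\in\cH_{q,0}^{\alpha,1}((t_1-1,t_1)\times B_1)$ solves
\[
-\partial_t^\alpha(\eta v)+D_i\bigl(a^{ij}D_j(\eta v)\bigr)=\cG(t,x)
\]
on $(t_1-1,t_1)\times B_1$, where the nonlocal commutator $\cG$ is controlled by Lemma~\ref{lem0212_1}. Note that $a^{1j}D_j(\eta v)=\eta V$, so Hölder bounds on the conormal-like quantity of $\eta v$ are Hölder bounds on $\eta V$.

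The key step is then to invoke the $x_1$-coefficient analog of \cite[Lemma 4.1]{MR4345837}, available from the divergence-form machinery of \cite{MR4030286}: it bounds both $\|D(\eta v)\|_{\cH_{q,0}^{\alpha,1}((t_1-1,t_1)\times B_{3/4})}$ and $[\eta V]_{C^{\sigma\alpha/2,\sigma}((t_1-1,t_1)\times B_{3/4})}$ in terms of $\|D(\eta v)\|_{L_q(Q_1(t_1,0))}+\|\cG\|_{L_q(Q_1(t_1,0))}$. The reason $V$ rather than $D_1 v$ carries the Hölder norm is that, even when the estimates are pushed to be independent of the smoothness of $a^{ij}(x_1)$, it is the $x_1$-flux that inherits the extra regularity dictated by the equation. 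Combining this with the estimate of $\|\cG\|_{L_q}$ from Lemma~\ref{lem0212_1} (applied with $s_k=k$, $u=v$, $\Omega=B_1$) and with the embeddings \cite[Corollaries 7.4 and 7.6]{MR4345837}, I would obtain \eqref{eq1119_01} and \eqref{eq1119_02} with some intermediate exponent $p_1>p_0$ in place of $p$ on the left.

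Finally, the self-improving iteration from the proof of Lemma~\ref{lem1118_1} (see also \cite[Proposition 4.3]{MR4186022}) upgrades the left-hand exponent from $p_1$ to any prescribed $p\in(p_0,\infty)$, completing both \eqref{eq1119_01} and \eqref{eq1119_02}. The main obstacle I foresee is isolating and correctly invoking the $x_1$-coefficient interior regularity result in the third step: unlike Lemma~\ref{lem1118_1}, where differentiating in $x'$ produced a PDE solved by $D_{x'}v$ and it was enough to control only that tangential quantity, here the normal direction must be handled as well, the $L^q$-bound must encompass the full gradient, and the Hölder norm must be placed on $V$ rather than on $D_1 v$.
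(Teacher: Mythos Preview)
Your outline tracks the paper's proof closely, but one step is misidentified and, as written, would not give the stated conclusion. You define $\cG$ as the time-cutoff commutator for $v$ (the right-hand side of the equation for $\eta v$) and then estimate $\|\cG\|_{L_q}$ via Lemma~\ref{lem0212_1} with $u=v$. That produces a bound in terms of $(|v|^{p_0})^{1/p_0}$ on the shifted cylinders, not $(|Dv|^{p_0})^{1/p_0}$, and so does not yield \eqref{eq1119_01}--\eqref{eq1119_02}. The correct object is the vector of commutators $\cG_\ell$ built from $D_\ell v$, $\ell=1,\ldots,d$: the result you are really invoking is \cite[Lemma~4.9]{MR4030286}, which gives directly
\[
\|D(\eta v)\|_{L_{q_1}((t_1-1,t_1)\times B_{1/2})}\le N\|D(\eta v)\|_{L_q(Q_1(t_1,0))}+N\|(\cG_1,\ldots,\cG_d)\|_{L_q(Q_1(t_1,0))}
\]
for $q_1\ge q+\alpha/(\alpha d+1-\alpha)$, and (from inequality (4.34) in its proof) $\|\eta V\|_{\bH_{q,0}^{\alpha,1}((t_1-1,t_1)\times B_{1/2})}$ bounded by the same right-hand side. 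Lemma~\ref{lem0212_1} is then applied with $u=D_\ell v$, which is exactly what puts $|Dv|$ on the right-hand side.

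A second, smaller point: for the H\"older bound on $V$ you should not use \cite[Corollary~7.4]{MR4345837}, which is an embedding for $\cH^{\alpha,1}$. The quantity $\eta V$ sits in $\bH_{q,0}^{\alpha,1}$ (one gains a full $\partial_t^\alpha(\eta V)\in L_q$ here because $a^{ij}(x_1)$ are smooth and $D(\eta v)\in \bH_{q,0}^{\alpha,2}$), so the relevant embedding is \cite[Corollary~A.8]{MR4030286}. Likewise, the integrability gain for $D(\eta v)$ comes straight from \cite[Lemma~4.9]{MR4030286} rather than through an $\cH^{\alpha,1}$ Sobolev embedding; this is why the exponent gap is the additive one in \eqref{eq1119_03} rather than \eqref{eq1113_10}. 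With these two corrections your sketch coincides with the paper's argument.
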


\begin{proof}
As in the proof of Lemma \ref{lem1118_1}, we consider $r = 1$.
By the observation made at the beginning of the proof of Lemma \ref{lem1118_1}, $v$ satisfies \eqref{eq1118_01} in $(S,t_0) \times B_R$ for any $S \leq 0$ and $v \in \cH_{q,0}^{\alpha,1}((S,t_0) \times B_1)$ for any $q \in (1,\infty)$.
Then, by \cite[Lemma 4.9]{MR4030286}, for a $q_1 \in (q,\infty]$ satisfying
\begin{equation}
							\label{eq1119_03}
q_1 \geq q + \frac{\alpha}{\alpha d + 1 - \alpha},
\end{equation}
we have
\begin{equation}
							\label{eq1119_04}
\begin{aligned}
\|D(\eta v)\|_{L_{q_1}\left((t_1-1,t_1) \times B_{1/2}\right)} &\leq N \|D(\eta v)\|_{L_q(Q_1(t_1,0))} + N \|\cG\|_{L_q(Q_1(t_1,0))}
\\
&\leq N \sum_{j=1}^\infty j^{-(1+\alpha)} \left( |Dv|^q\right)^{1/q}_{Q_1\left(t_1-(j-1),0\right)},
\end{aligned}
\end{equation}
where $N=N(d,\delta,\alpha,q)$, $\eta(t)$ is from \eqref{eq0212_07} with $t_0$ replaced with $t_1$ and $\mu = 1$, $\nu = (1/2)^{2/\alpha}$, $\cG=(\cG_1,\ldots,\cG_d)$, $\cG_\ell$ is as in \eqref{eq0212_01} with $u$ replaced with $D_\ell v$ for $\ell = 1,2,\ldots,d$, and the second inequality is obtained by Lemma \ref{lem0212_1} with $q$ in place of $p$, $\Omega = B_1$, $s_j = j$, $u = D_\ell v$, $N_0=2$, and $\mu = 1$, $\nu = (1/2)^{2/\alpha}$.

Also note that the inequality (4.34) in the proof of \cite[Lemma 4.9]{MR4030286} ($V$ in (4.34) of \cite{MR4030286} equals to $\eta(t)V$ here) shows that
\begin{equation}
							\label{eq1119_05}
\begin{aligned}
\|\eta V\|_{\bH_{q,0}^{\alpha,1}\left((t_1-1,t_1)\times B_{1/2}\right)} &\leq N \|D(\eta v)\|_{L_q(Q_1(t_1,0))} + N \|\cG_\ell\|_{L_q(Q_1(t_1,0))}
\\
&\leq N \sum_{j=1}^\infty j^{-(1+\alpha)} \left( |D_\ell v|^q\right)^{1/q}_{Q_1\left(t_1-(j-1),0\right)},
\end{aligned}
\end{equation}
where $N = N(d,\delta,\alpha, q)$.
Indeed, even if not clearly articulated in \cite[Lemma 4.9]{MR4030286}, we have
\begin{equation*}
\eta(t) V \in \bH_{q,0}^{\alpha,2}((t_1-1,t_1) \times B_{1/2}) \subset \bH_{q,0}^{\alpha,1}((t_1-1,t_1) \times B_{1/2})
\end{equation*}
because by Lemma 4.7 in \cite{MR4030286}
$D_x(\eta v) \in \bH_{q,0}^{\alpha,2} ((t_1-1,t_1) \times B_{1/2})$
and $a^{ij}$ are infinitely differentiable with bounded derivatives.

To prove \eqref{eq1119_01}, find $p_1 \in (p_0,\infty)$ satisfying \eqref{eq1119_03} with $p_1$ and $p_0$ in places of $q_1$ and $q$, respectively.
Then, the inequalities in \eqref{eq1119_04} with $p_1$ and $p_0$ imply \eqref{eq1119_01} with $r=1$, provided that $p_1 \geq p$.
If $p_1 < p$, we use the iteration process as in the proof of Lemma \ref{lem1118_1} (\cite[Proposition 4.3]{MR4186022}) to obtain \eqref{eq1119_01} for $p$.

To show \eqref{eq1119_02}, we first assume that $1 - (d+1/\alpha)/p_0 := \sigma > 0$.
Since $\eta V \in \bH_{p_0,0}^{\alpha,1}((t_1-1,t_1) \times B_{1/2})$, by the embedding \cite[Corollary A.8]{MR4030286}, we have
$$
\begin{aligned}
\left[V\right]_{C^{\sigma \alpha/2, \sigma}(Q_{1/2}(t_1,0))} &\leq \left[\eta V\right]_{C^{\sigma \alpha/2, \sigma}((t_1 - 1, t_1) \times B_{1/2})}
\\
&\leq N \|\eta V\|_{\bH_{p_0,0}^{\alpha,1}((t_1-1,t_1) \times B_{1/2})},
\end{aligned}
$$
which together with \eqref{eq1119_05} with $q = p_0$ proves \eqref{eq1119_02} with $r=1$.
If $1-(d+1/\alpha)/p_0 \leq 0$, we proceed as in the last part of the proof of Lemma \ref{lem1118_1} by using \eqref{eq1119_01}.
The lemma is proved.
\end{proof}

Denote
\[
\Pi_r = \{(x_1,x') \in \bR^d: -r<x_1<r, x' \in \bR^{d-1}\}.
\]

\begin{lemma}
							\label{lem0211_01}
Let $\alpha \in (0,1)$, $p_0 \in (1,\infty)$, $t_0 \in (0,\infty)$, $r\in (0,\infty)$, and $a^{ij}$ satisfy Assumption \ref{assum0808_2}.
Suppose that $w \in \cH_{p_0,0}^{\alpha,1}((0,t_0) \times \Pi_r)$ satisfies
\begin{equation}
							\label{eq0206_02}
-\partial_t^\alpha w + D_i\left(a^{ij}D_j w\right) = D_i g_i + f
\end{equation}
in $(0,t_0) \times \Pi_r$ with $w = 0$ on $(0,t_0) \times \partial \Pi_r$, where $g_i, f \in L_p\left((0,t_0) \times \Pi_r\right)$.
Then, we have
\begin{equation}
							\label{eq0208_02}
\|w\|_{L_{p_0}\left((0,t_0) \times \Pi_r\right)} \leq N r\|g_i\|_{L_{p_0}\left((0,t_0) \times \Pi_r\right)} + r^2 \|f\|_{L_{p_0}\left((0,t_0) \times \Pi_r\right)},
\end{equation}
where $N = N(d,\delta, p_0)$ for $p_0 \geq 2$ and $N = N(d,\delta,\alpha,p_0)$ for $p_0 \in (1,2)$, but independent of $t_0$.
\end{lemma}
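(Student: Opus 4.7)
My plan is to first reduce to $r=1$ by parabolic scaling, then handle $p_0 \geq 2$ by a direct energy method (yielding constants independent of $\alpha$) and $p_0 \in (1,2)$ by duality against the first case. For the reduction, set $\tilde w(\tau,x) = w(r^{2/\alpha}\tau, rx)$ and analogously rescale coefficients and data. A direct computation gives $\partial_t^\alpha w(r^{2/\alpha}\tau, rx) = r^{-2}\partial_\tau^\alpha \tilde w(\tau,x)$, so $\tilde w$ solves the rescaled equation on $(0, r^{-2/\alpha} t_0)\times \Pi_1$ with right-hand side $D_i(r\tilde g_i) + r^2 \tilde f$ and zero lateral trace. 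Since all three $L_{p_0}$-norms scale by the common factor $r^{(d+2/\alpha)/p_0}$, it suffices to prove $\|w\|_{L_{p_0}((0,t_0)\times\Pi_1)} \leq N\|g\|_{L_{p_0}} + N\|f\|_{L_{p_0}}$ uniformly in $t_0$.

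For $p_0 \geq 2$, I would approximate $w$ by smooth functions with zero initial trace (as provided by Remark \ref{rem0727_1}) so that testing the equation against $|w|^{p_0-2}w$ is legitimate. Integrating by parts in space, the lateral Dirichlet condition kills boundary terms, and on the time side the generalized Alikhanov inequality
\[
|w|^{p_0-2}w \, \partial_t^\alpha w \geq \frac{1}{p_0}\, \partial_t^\alpha(|w|^{p_0}),
\]
valid for $p_0 \geq 2$ and $w(0)=0$ by convexity of $|\cdot|^{p_0}$ together with the positivity of the Caputo kernel, allows one to drop the resulting non-negative term $\frac{1}{p_0}\int_{\Pi_1} I^{1-\alpha}(|w|^{p_0})(t_0,\cdot)\,dx$. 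Using ellipticity and Young's inequality, I obtain
\[
\int_0^{t_0}\!\!\int_{\Pi_1} |D(|w|^{p_0/2})|^2 \,dx\,dt \leq C\|g\|_{L_{p_0}}^2\|w\|_{L_{p_0}}^{p_0-2} + C\|f\|_{L_{p_0}}\|w\|_{L_{p_0}}^{p_0-1},
\]
and then the $x_1$-Poincar\'e inequality $\|u\|_{L_{p_0}(\Pi_1)} \leq N\|D_1 u\|_{L_{p_0}(\Pi_1)}$ (applicable because $|w|^{p_0/2}$ still vanishes on $x_1 = \pm 1$) converts the left side into a constant times $\|w\|_{L_{p_0}}^{p_0}$. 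A final Young absorption closes this case, with constants $N(d,\delta,p_0)$ independent of $\alpha$.

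For $p_0 \in (1,2)$, I use duality with the conjugate exponent $p_0' > 2$ just settled. Given $\psi \in C_0^\infty((0,t_0)\times\Pi_1)$ with $\|\psi\|_{L_{p_0'}} \leq 1$, I solve the adjoint strip problem on $(-t_0, 0)\times\Pi_1$ with coefficients $a^{ji}(-\tau, x_1)$, right-hand side $\psi(-\tau, x)$, zero lateral trace and zero value at $\tau = -t_0$, following the construction used in the proof of Proposition \ref{prop1113_1}; after time reversal this is a standard forward Caputo problem in $\cH_{p_0',0}^{\alpha,1}$ on the strip, and the previous step furnishes a solution $v$ with $\|v\|_{L_{p_0'}} + \|Dv\|_{L_{p_0'}} \leq N$. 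Pairing $\psi$ with $w$ via both weak formulations and integrating by parts, as in Proposition \ref{prop1113_1}, yields
\[
\int_0^{t_0}\!\!\int_{\Pi_1} w\psi \, dx\,dt \leq \|g\|_{L_{p_0}}\|Dv\|_{L_{p_0'}} + \|f\|_{L_{p_0}}\|v\|_{L_{p_0'}},
\]
and taking the supremum over $\psi$ delivers \eqref{eq0208_02} with $N=N(d,\delta,\alpha,p_0)$.

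The main obstacle I foresee is the rigorous setup of the duality in the fractional setting. Unlike the local case, $\partial_t^\alpha = \partial_t I_0^{1-\alpha}$ is not self-adjoint in time; its formal adjoint is a right-sided Riemann--Liouville derivative, so the dual problem must first be time-reversed to recast it as a standard forward Caputo equation before the $p_0'\geq 2$ estimate applies. One must then carefully verify the vanishing of the boundary and initial terms produced by the fractional integration by parts, which is ensured by $w(0,\cdot) = 0$ and the analogous terminal condition on $v$, exactly as in the proof of Proposition \ref{prop1113_1}; the space-side manipulations and the Alikhanov-type inequality, in contrast, are essentially the same as in the local parabolic theory.
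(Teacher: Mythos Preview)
Your strategy matches the paper's: scale to $r=1$, run an energy argument with test function $|w|^{p_0-2}w$ for $p_0\ge 2$ (using the convexity/Alikhanov inequality to discard the fractional term and the $x_1$-Poincar\'e inequality on $|w|^{p_0/2}$ to close), then treat $p_0\in(1,2)$ by duality against the time-reversed adjoint strip problem in $\cH_{p_0',0}^{\alpha,1}$. The scaling and the $p_0\ge 2$ step are correct as written.

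There is, however, a genuine gap in your duality step. You assert that ``the previous step furnishes a solution $v$ with $\|v\|_{L_{p_0'}}+\|Dv\|_{L_{p_0'}}\le N$,'' but the energy argument for $p_0'\ge 2$ only controls $\int |v|^{p_0'-2}|Dv|^2$, hence $\|v\|_{L_{p_0'}}$ via Poincar\'e; it does \emph{not} yield $\|Dv\|_{L_{p_0'}}$ when $p_0'>2$. Yet the pairing produces the term $\|g\|_{L_{p_0}}\|Dv\|_{L_{p_0'}}$, so you need a separate $t_0$-independent gradient bound for the dual solution. The paper obtains exactly this (their \eqref{eq0209_01}) by rewriting the equation with an artificial zeroth-order term, $-\partial_t^\alpha v + D_i(a^{ji}D_j v) - v = \psi - v$, and invoking \cite[Proposition~6.2]{MR4345837} with $\lambda=1$ on the strip, which gives $\|Dv\|_{L_{p_0'}}\le N(\|\psi\|_{L_{p_0'}}+\|v\|_{L_{p_0'}})$ with $N=N(d,\delta,\alpha,p_0')$ independent of $t_0$; the $\|v\|_{L_{p_0'}}$ on the right is then absorbed via the already-proved \eqref{eq0208_02}. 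This missing step is precisely what introduces the $\alpha$-dependence of the constant for $p_0\in(1,2)$, which you state but do not explain. (A minor related point: the existence of the dual solution on the strip with Dirichlet boundary is provided by \cite[Proposition~6.2]{MR4345837}, not by the whole-space construction of Proposition~\ref{prop1113_1}.)
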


\begin{proof}
Thanks to scaling, we set $r = 1$.

For $p_0 \in [2,\infty)$, by applying $|w|^{p_0-2}w$ as a test function to the equation \eqref{eq0206_02} we have
\begin{multline}
							\label{eq0208_01}
\int_0^{t_0}\int_{\Pi_1} (\partial_t^\alpha w) |w|^{p_0-2}w \, dx \, dt
+ \int_0^{t_0}\int_{\Pi_1} a^{ij} D_j w D_i \left( |w|^{p_0-2} w \right) \, dx \, dt
\\
= \int_0^{t_0}\int_{\Pi_1} g_i D_i \left( |w|^{p_0-2} w \right) \, dx \, dt - \int_0^{t_0}\int_{\Pi_1} f |w|^{p_0-2} w \, dx \, dt.
\end{multline}
In fact, $|w|^{p_0-2}w$ may not be qualified as a test function because, for instance, $|w(t_0,x)|^{p_0-2}w(t_0,x)$ may not be zero.
However, upon considering an infinitely differentiable approximation sequence as in the proof of \cite[Lemma 4.1]{MR4030286}, we obtain the above equality for sufficiently smooth $w$.
As explained in the proof of \cite[Lemma 4.1]{MR4030286}, the first term in \eqref{eq0208_01} is non-negative.
From this observation with the uniform ellipticity condition, it follows that
\begin{align*}
&\int_0^{t_0} \int_{\Pi_1} |w|^{p_0-2}|Dw|^2 \, dx \, dt \leq N \int_0^{t_0} \int_{\Pi_1} a^{ij} D_j w D_i w |w|^{p_0-2} \, dx \, dt\\
&= N \int_0^{t_0}\int_{\Pi_1} a^{ij} D_j w D_i \left( |w|^{p_0-2} w \right) \, dx \, dt\\
&\leq N \int_0^{t_0}\int_{\Pi_1} g_i D_i \left( |w|^{p_0-2} w \right) \, dx \, dt - N\int_0^{t_0}\int_{\Pi_1} f |w|^{p_0-2} w \, dx \, dt,
\end{align*}
where $N=N(d,\delta,p_0)$.
Note that
\begin{align*}
&\int_0^{t_0}\int_{\Pi_1} g_i D_i \left( |w|^{p_0-2} w \right) \, dx \, dt = (p_0-1) \int_0^{t_0}\int_{\Pi_1} g_i |w|^{p_0-2} D_i w \, dx \, dt\\
&\leq \varepsilon_1 \int_0^{t_0}\int_{\Pi_1} |w|^{p_0-2}|Dw|^2 \, dx \, dt + \varepsilon_2 \int_0^{t_0}\int_{\Pi_1} |w|^{p_0} \, dx \, dt + N \int_0^{t_0}\int_{\Pi_1} |g|^{p_0} \, dx \, dt
\end{align*}
for arbitrary $\varepsilon_1, \varepsilon_2 > 0$, where $N=N(\varepsilon_1,\varepsilon_2,p_0)$.
Also note that
\[
\int_0^{t_0} \int_{\Pi_1} f |w|^{p_0-2} w \, dx \, dt \leq \varepsilon_3 \int_0^{t_0}\int_{\Pi_1} |w|^{p_0} \, dx \, dt + N \int_0^{t_0}\int_{\Pi_1} |f|^{p_0} \, dx \, dt
\]
for arbitrary $\varepsilon_3 > 0$, where $N = N(\varepsilon_3)$.
Then, since $w(t,-1,x') = w(t,1,x') = 0$, by the Poincar\'{e} inequality, we notice that
\[
\int_0^{t_0}\int_{\Pi_1} |w|^{p_0} \, dx \, dt = \int_0^{t_0}\int_{\Pi_1} \left(|w|^{p_0/2}\right)^2 \, dx \, dt \leq p_0^2 \int_0^{t_0}\int_{\Pi_1} |w|^{p_0-2} |D_1w|^2 \, dx \, dt.
\]
Combining the above inequalities with sufficiently small $\varepsilon_1, \varepsilon_2, \varepsilon_3 > 0$, we arrive at \eqref{eq0208_02} with $r=1$ for $p_0 \geq 2$.
In addition to \eqref{eq0208_02}, for the duality argument below, we also need
\begin{equation}
							\label{eq0209_01}
\|Dw\|_{L_p\left((0,t_0)\times \Pi_1\right)} \leq N \|g\|_{L_p\left((0,t_0)\times \Pi_1\right)} + N \|f\|_{L_p\left((0,t_0)\times \Pi_1\right)},
\end{equation}
where $N = N(d,\delta,\alpha,p_0)$, but independent of $t_0$. To prove this, we write \eqref{eq0206_02} as
\[
-\partial_t^\alpha w + D_i(a^{ij} D_j w) - w = D_i g_i + f -w.
\]
Then by \cite[Proposition 6.2]{MR4345837} with $\lambda = 1$, we have
\[
\|Dw\|_{L_p\left((0,t_0)\times \Pi_1\right)} \leq N \|g\|_{L_p\left((0,t_0)\times \Pi_1\right)} + N \|f\|_{L_p\left((0,t_0)\times \Pi_1\right)} + N \|w\|_{L_p\left((0,t_0)\times \Pi_1\right)},
\]
where $N = N(d,\delta,\alpha,p_0)$, but independent of $t_0$.
From this together with \eqref{eq0208_02} we obtain \eqref{eq0209_01}.

For $p_0 \in (1,2)$, we use the usual duality argument made possible by the existence result \cite[Proposition 6.2]{MR4345837} for the partially bounded domain $(0,t_0) \times \Pi_1$ and the estimates \eqref{eq0208_02} and \eqref{eq0209_01} for $p_0 \geq 2$.
For $\phi \in C_0^\infty\left((0,t_0) \times \Pi\right)$, by utilizing \cite[Proposition 6.2]{MR4345837} we find $v \in \cH_{p_0',0}^{\alpha,1}\left((0,t_0) \times \Pi\right)$, $1/p_0 + 1/p_0' = 1$, satisfying
\begin{equation}
							\label{eq0211_01}
- \partial_t^\alpha v + D_i\left( a^{ji}(t_0-t) D_j v \right) = \phi_0(t_0-t,x)
\end{equation}
in $(0,t_0) \times \Pi_1$ with the Dirichlet boundary condition
\[
v(t,-1,x') = v(t,1,x') = 0.
\]
Set $\varphi(t,x)= v(t_0-t,x)$ and $\phi(t,x) = w(t_0-t,x)$ to be applied as test functions to \eqref{eq0206_02} and \eqref{eq0211_01}, respectively, to get
\begin{align*}
&\int_0^{t_0}\int_{\Pi_1} \left( f(t,x) v(t_0-t,x) - g_i(t,x)D_iv(t_0-t,x) \right) \, dx \, dt\\
&= \int_0^{t_0}\int_{\Pi_1} \phi_0(t,x) w(t,x) \, dx \, dt.
\end{align*}
This combined with the estimates \eqref{eq0208_02} and \eqref{eq0209_01} for $v$ with $p_0' > 2$ proves \eqref{eq0208_02} for $p_0 \in (1,2)$ when $r=1$.
\end{proof}

\begin{lemma}
							\label{lem0221_1}
Let $\alpha \in (0,1)$, $p_0 \in (1,\infty)$, $t_0 \in (0,\infty)$, $0< r < R < \infty$, and $a^{ij}$ satisfy Assumption \ref{assum0808_2}.
Suppose that $w \in \cH_{p_0,0}^{\alpha,1}\left((0,t_0) \times \Pi_R\right)$ satisfies \eqref{eq0206_02} in $(0,t_0) \times \Pi_R$ with $w = 0$ on $(0,t_0) \times \partial \Pi_R$, where $g_i, f \in L_p\left((0,t_0) \times \Pi_R\right)$.
Then, for any $t_1 \leq t_0$ and $\rho>0$, we have
\begin{multline}
								\label{eq0921_06}
\|Dw\|_{L_{p_0}\left((t_1-\rho,t_1) \times B_r\right)}
\leq N\frac{\theta^{1/p_0} R}{R-r}  \sum_{j=0}^\infty  2^{-\alpha j} \left( \dashint_{\!t_1- s_{j+2}\theta}^{\,\,\,t_1-s_j\theta} \int_{\Pi_R} |g_i|^{p_0} \, dx \, dt \right)^{1/p_0}
\\
+ N\frac{\theta^{1/p_0} R^2}{R-r} \sum_{j=0}^\infty  2^{-\alpha j} \left( \dashint_{\!t_1-s_{j+2}\theta}^{\,\,\,t_1-s_j \theta} \int_{\Pi_R} |f|^{p_0} \, dx \, dt \right)^{1/p_0},
\end{multline}
where $\theta = R^{2/\alpha}$ if $\rho < R^{2/\alpha}$ and $\theta = \rho$ if $\rho \geq R^{2/\alpha}$,
\begin{equation}
							\label{eq0926_01}
s_0 = 0, \quad s_j = 2^j, \quad j = 1,2,\ldots,
\end{equation}
$N=N(d,\delta,\alpha,p_0)$, and $w, g_i, f$ denote the zero extensions of them for $t \leq 0$.
\end{lemma}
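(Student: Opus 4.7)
The plan is to reduce to the energy estimate of Lemma \ref{lem0211_01} by a space-time cutoff argument and to handle the resulting non-local commutator via Lemma \ref{lem0212_1} in the appendix. The dyadic weight $2^{-\alpha j}$ in \eqref{eq0921_06} will come entirely from the coefficients produced by Lemma \ref{lem0212_1}.

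First, choose cutoffs. Let $\zeta(x_1)$ be smooth with $\zeta \equiv 1$ for $|x_1| \le r$, supported in $\{|x_1| \le (R+r)/2\}$, and $|\zeta'| \leq N(R-r)^{-1}$. Let $\eta(t)$ be smooth with $\eta \equiv 1$ on $[t_1-\rho, t_1]$, supported in $[t_1-2\theta, t_1]$ (allowed because $\theta \ge \rho$), and $|\eta'| \leq N\theta^{-1}$. Extend $w, g_i, f$ by zero for $t \le 0$ and set $W := \eta \zeta w$. Distributing derivatives in $D_i(a^{ij} D_j W)$ and using \eqref{eq0206_02}, one checks that $W$ satisfies a Dirichlet problem on $(-\infty, t_1) \times \Pi_{(R+r)/2}$ of the form $-\partial_t^\alpha W + D_i(a^{ij} D_j W) = D_i G_i + F$, with $G_i = \eta \zeta g_i + \eta\, a^{ij} w D_j \zeta$ and $F = \eta \zeta f + \eta\, a^{ij} D_j w\, D_i \zeta - \eta g_i D_i \zeta - \cG_\eta$, where $\cG_\eta := \partial_t^\alpha(\eta \zeta w) - \eta \zeta\,\partial_t^\alpha w$ is the temporal commutator.

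Next, apply Lemma \ref{lem0211_01} together with the gradient companion \eqref{eq0209_01} (rescaled so that $\Pi_{(R+r)/2}$ replaces $\Pi_1$, producing the factor $R$ in front of the $F$-norm) to $W$. Since $W = w$ on $(t_1-\rho, t_1)\times B_r$, this yields $\|Dw\|_{L_{p_0}((t_1-\rho,t_1)\times B_r)} \leq N\|G_i\|_{L_{p_0}} + NR\|F\|_{L_{p_0}}$. The factors $R/(R-r)$ and $R^2/(R-r)$ in the target come from pairing $|\zeta'| \leq N(R-r)^{-1}$ with the $1$ and $R$ scalings of $G_i$ and $F$, while the $(R-r)^{-1}R\|Dw\|$ cutoff term is absorbed either by a geometric iteration over radii $r = r_0 < r_1 < \ldots < R$ in the standard hole-filling manner, or by performing the whole step with a free intermediate radius and then optimizing. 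For the commutator, apply Lemma \ref{lem0212_1} with $\Omega = \Pi_{(R+r)/2}$, $u = \zeta w$, $p = p_0$, $\mu = 2\theta$, $\nu = \theta$ and the sequence $s_k = 2^k \theta$ from \eqref{eq0926_01} (rescaled by $\theta$ to match the shells in the target). The resulting coefficient $s_k^{-\alpha-1}(s_{k+1}-s_k)^{1-1/p_0}\mu^{1/p_0}$ collapses to a universal constant times $2^{-k\alpha}\theta^{-\alpha}(2^k\theta)^{-1/p_0}\theta^{1/p_0}$, which is exactly the coefficient needed to match the $\theta^{1/p_0}\cdot 2^{-\alpha k}\cdot(\dashint\cdots)^{1/p_0}$ structure in \eqref{eq0921_06} after the $L_{p_0}$ norm over a shell of time-width $\sim 2^k\theta$ is renormalized to a time-average.

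Finally, convert the $\|w\|$-norms on past dyadic shells (arising from both $\cG_\eta$ and the spatial cutoff annulus term) into norms of $g_i$ and $f$ by applying Lemma \ref{lem0211_01} on each truncated time interval $(0, t_1-2^k\theta)$, obtaining $\|w\|_{L_{p_0}((0, t_1-2^k\theta)\times\Pi_R)} \leq NR\|g_i\|_{L_{p_0}} + NR^2\|f\|_{L_{p_0}}$. Writing each such past-time norm as a further sum over later shells and interchanging summation order absorbs $\sum_{k \le j} 2^{-k\alpha}$ into a universal constant, leaving exactly one factor of $2^{-\alpha j}$ on the $j$-th shell of data, which gives \eqref{eq0921_06}. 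The two sub-cases are treated uniformly by the choice $\theta := \max(\rho, R^{2/\alpha})$, which guarantees that $\eta$ has support at least the natural parabolic scale $R^{2/\alpha}$, an assumption implicitly used when applying Lemma \ref{lem0212_1}. The hard part will be the double-sum bookkeeping: the commutator naturally yields decay indexed by past shells of $w$, while the target estimate requires dyadic decay indexed by past shells of the data; in addition, the absorption of $\|Dw\|$ via the spatial cutoff must commute cleanly with the time-shell decomposition, which is the most delicate point.
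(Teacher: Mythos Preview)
Your overall strategy---temporal cutoff, commutator via Lemma~\ref{lem0212_1}, then convert the $\|w\|$-norms on past shells to data norms via Lemma~\ref{lem0211_01}---is close to the paper's, but the last step contains a genuine gap. When you apply Lemma~\ref{lem0211_01} on the full past interval $(0, t_1-2^k\theta)$, you obtain data norms over the \emph{entire} past up to that point. After decomposing those into later shells and interchanging as you describe, you face $\sum_j (\text{data on shell}_j)\sum_{k\le j} 2^{-\alpha k}$. The inner sum is bounded by a universal constant independent of $j$, so you end up with \emph{no} $2^{-\alpha j}$ weight on the $j$-th data shell---your sentence ``absorbs $\sum_{k\le j}2^{-k\alpha}$ into a universal constant, leaving exactly one factor of $2^{-\alpha j}$'' is self-contradictory. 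Concretely, if $g_i$ is supported in a single far-past shell of index $J$, your bound is $O(1)\|g_i\|$ while \eqref{eq0921_06} gives $O(2^{-\alpha J})$ after normalization.

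The paper closes this gap by a different mechanism: instead of applying Lemma~\ref{lem0211_01} once on the full past, it localizes \emph{again} in each shell via a separate cutoff $\eta_j$ supported in $(t_1-\tau_{j+2},t_1-\tau_j)$, applies Lemma~\ref{lem0211_01} there, and picks up a \emph{new} commutator for each $j$. This yields a self-referential recursion
\[
A_j \le N G_j + N\rho^{-\alpha}\sum_{k>j} 2^{-\alpha k} A_k
\]
for the shell quantities $A_j=(\dashint_{t_1-\tau_{j+1}}^{\,t_1-\tau_j}\int_{\Pi_1}|w|^{p_0})^{1/p_0}$, which is summed with weights $2^{-\alpha j}$ and the tail absorbed (after choosing a threshold index $k_0$ so that $N\sum_{j\ge k_0}2^{-\alpha j}<1/2$) to give $\sum_j 2^{-\alpha j}A_j \le N\sum_j 2^{-\alpha j}G_j$. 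This recursive absorption is the missing idea. As a smaller point, the paper also avoids your spatial cutoff $\zeta$---and the attendant hole-filling, which would have to commute cleanly with the time-shell bookkeeping---by invoking a local gradient estimate (\cite[Proposition~6.2]{MR4345837} via \cite[Lemma~4.3]{MR4030286}) that delivers the $(R-r)^{-1}$ factor directly without generating an annular $\|Dw\|$ term to reabsorb.
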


\begin{remark}
							\label{rem0930_1}
Note that
\begin{equation*}
\dashint_{(t_1-s_{j+2}\theta, t_1-s_j\theta)} \leq \frac{4}{3}\dashint_{(t_1-s_{j+2}\theta, t_1)}.
\end{equation*}
Thus, one can replace the integrals in \eqref{eq0921_06} with those over $(t_1-s_{j+2}\theta,t_1)$, which can be further replaced with $(t_1-s_j\theta,t_1)$ with another constant $N$.
Hence, for instance, instead of the last summation in \eqref{eq0921_06}, we may have
\[
 \sum_{j=1}^\infty 2^{-\alpha j} \left( \dashint_{\!t_1-s_j\theta}^{\,\,\,t_1} \int_{\Pi_R} |f|^{p_0} \, dx \, dt \right)^{1/p_0}.
\]
We employ such a replacement throughout the paper whenever the replacement is necessary or makes the exposition better.
\end{remark}

\begin{proof}[Proof of Lemma \ref{lem0221_1}]
By scaling, we may assume that $0<r < R = 1$.
We further assume that $\rho \geq 1$, which will be removed later.
Note that, for any $S \leq 0$, we have  $w \in \cH_{p_0,0}^{\alpha,1}\left((S,t_1) \times \Pi_1\right)$ satisfying \eqref{eq0206_02} in $(S, t_1) \times \Pi_1$, where $\partial_t^\alpha = \partial_t I_S^{1-\alpha}$.
Thus, by taking $\eta$ from \eqref{eq0212_07} with $t_0$ replaced with $t_1$ and $\mu = 2\rho$, $\nu = \rho$, we see that
$\eta w \in \cH_{p_0,0}^{\alpha,1}\left((t_1-2\rho,t_1) \times \Pi_1 \right)$ satisfies
\[
-\partial_t^{\alpha} (\eta w) + D_i\left(a^{ij}D_j (\eta w) \right) = D_i(\eta g_i) + \eta f + F
\]
in $(t_1-2\rho,t_1) \times \Pi_1$ and also in $(t_1-2\rho,t_1) \times B_1$, where $\partial_t^\alpha = \partial_t I_{t_1-2\rho}^{1-\alpha}$ and $F(t,x)$ is defined as in \eqref{eq0212_01} with $u$ replaced with $w$.
By \cite[Proposition 6.2]{MR4345837} (also see the proof of \cite[Lemma 4.3]{MR4030286}), it follows that
\begin{equation}
							\label{eq0920_04}
\begin{aligned}
&\|Dw\|_{L_{p_0}\left((t_1-\rho,t_1)\times B_r\right)} \leq \|D(\eta w)\|_{L_{p_0}((t_1-2\rho,t_1) \times B_r)}\\
&\leq \frac{N}{1-r} \|w\|_{L_{p_0}((t_1-2\rho,t_1) \times \Pi_1)} + N \|g_i\|_{L_{p_0}((t_1-2\rho,t_1) \times \Pi_1)}\\
&\quad + N(1-r) \||f| + |F|\|_{L_{p_0}\left((t_1-2\rho,t_1) \times \Pi_1\right)},
\end{aligned}
\end{equation}
where $N = N(d,\delta,\alpha,p_0)$.
Set
\begin{equation}
							\label{eq0922_01}
\tau_0 = 0, \quad \tau_j = 2^j\rho, \quad j = 1,2,\ldots.
\end{equation}
Note that the sequence $\{\tau_j\}_{k=1}^\infty$  satisfies \eqref{eq0212_04}.
Then, by Lemma \ref{lem0212_1} with $p = p_0$, $\Omega = \Pi_1$, $u = w$, $N_0 = 2$, $s_j=\tau_j$ from \eqref{eq0922_01}, $\mu=2\rho$, and $\nu = \rho$,
\begin{multline}
							\label{eq0920_05}
\|F\|_{L_{p_0}((t_1-2\rho,t_1) \times \Pi_1)} \leq N \rho^{-\alpha} \left( \int_{t_1-4 \rho}^{t_1-\rho} \int_{\Pi_1} |w(t,x)|^{p_0} \, dx \, dt \right)^{1/p_0}
\\
+ N \rho^{-\alpha + 1/p_0}\sum_{j=1}^\infty 2^{-\alpha j} \left( \dashint_{\!t_1-\tau_{j+1}}^{\,\,\,t_1-\tau_j} \int_{\Pi_1} |w(t,x)|^{p_0} \, dx \, dt \right)^{1/p_0},
\end{multline}
where $N=N(\alpha)$.
Denote
\[
A_j := \left( \dashint_{\!t_1-\tau_{j+1}}^{\,\,\,t_1-\tau_j} \int_{\Pi_1} |w|^{p_0} \, dx \, dt \right)^{1/p_0}
\]
for $j=0,1,2, \ldots$.
By combining \eqref{eq0920_04} and \eqref{eq0920_05}, we have
\begin{align*}
&\|Dw\|_{L_{p_0}\left((t_1-\rho,t_1) \times B_r\right)} \leq N \|g_i\|_{L_{p_0}\left((t_1-2\rho,t_1) \times \Pi_1\right)} + N(1-r) \|f\|_{L_{p_0}\left((t_1-2\rho,t_1) \times \Pi_1\right)}\\
&\quad + \frac{N}{1-r}\left( \int_{t_1-2\rho}^{t_1}\int_{\Pi_1} |w|^{p_0}\right)^{1/p_0} + N(1-r) \rho^{-\alpha}\left(\int_{t_1-4\rho}^{t_1-\rho} \int_{\Pi_1} |w|^{p_0} \right)^{1/p_0}\\
&\quad + N(1-r) \rho^{-\alpha+1/p_0} \sum_{j=1}^\infty 2^{-\alpha j} A_j.
\end{align*}
Note that
\[
\left( \int_{t_1-2\rho}^{t_1}\int_{\Pi_1} |w|^{p_0}\right)^{1/p_0} = (2\rho)^{1/p_0} A_0
\]
and
\begin{align*}
\left(\int_{t_1-4\rho}^{t_1-\rho} \int_{\Pi_1} |w|^{p_0} \right)^{1/p_0} &\leq \left(\int_{t_1-\tau_1}^{t_1} \int_{\Pi_1} |w|^{p_0} \right)^{1/p_0} + \left(\int_{t_1-\tau_2}^{t_1-\tau_1} \int_{\Pi_1} |w|^{p_0} \right)^{1/p_0}\\
&= (2\rho)^{1/p_0}A_0 + (2\rho)^{1/p_0}A_1.
\end{align*}
Hence,
\[
\|Dw\|_{L_{p_0}\left((t_1-\rho,t_1) \times B_r\right)}
\leq N \|g_i\|_{L_{p_0}\left((t_1-2\rho,t_1) \times \Pi_1\right)} + N(1-r) \|f\|_{L_{p_0}\left((t_1-2\rho,t_1) \times \Pi_1\right)}
\]
\[
+ N \rho^{1/p_0} \left( \frac{1}{1-r} + (1-r) \rho^{-\alpha} \right) A_0
+ N(1-r)\rho^{-\alpha + 1/p_0} \sum_{j=1}^\infty 2^{-\alpha j} A_j,
\]
where $N=N(d,\delta,\alpha,p_0)$.
Due to the fact that $r < 1$, $\rho \geq 1$, we have
\begin{multline}
							\label{eq0921_05}
\|Dw\|_{L_{p_0}\left((t_1-\rho,t_1) \times B_r\right)} \leq N \|g_i\|_{L_{p_0}\left((t_1-2\rho,t_1) \times \Pi_1\right)} + N\|f\|_{L_{p_0}\left((t_1-2\rho,t_1) \times \Pi_1\right)}
\\
+ N \rho^{1/p_0}\frac{1}{1-r} A_0 + N\rho^{-\alpha/2+1/p_0} \sum_{j=1}^\infty 2^{-\alpha j} A_j.
\end{multline}

To estimate $A_j$, for each $j=0,1,2,\ldots$, we take $\eta_j$ as $\eta$ in \eqref{eq0212_07} with $t_0= t_1-\tau_j$, $\mu = \tau_{j+2} - \tau_j$, and $\nu = \tau_{j+1}-\tau_j$.
That is,
\[
\eta_j(t) = \left\{
\begin{aligned}
1 \quad &\text{if} \quad t \geq t_1-\tau_{j+1},
\\
0 \quad &\text{if} \quad t \leq t_1-\tau_{j+2},
\end{aligned}
\right.
\]
\[
|\eta_j'(t)| \leq \frac{2}{\tau_{j+2}-\tau_{j+1}} = 2^{-j}\rho^{-1}, \quad j = 0,1,2,\ldots.
\]
Then, $\eta_j w \in \cH_{p_0,0}^{\alpha,1}\left((t_1-\tau_{j+2}, t_1-\tau_j) \times \Pi_1 \right)$ satisfies
\begin{equation}
							\label{eq0921_01}
-\partial_t^\alpha(\eta_j w) + D_i\left(a^{ij} D_j (\eta_j w) \right) = D_i(\eta_j g_i) + \eta_j f + F_j
\end{equation}
in $(t_1-\tau_{j+2}, t_1-\tau_j) \times \Pi_1$,
where $F_j(t,x)$ is as in \eqref{eq0212_01} with $u$ and $\eta$ replaced with $w$ and $\eta_j$, respectively.
By Lemma \ref{lem0211_01} applied to \eqref{eq0921_01} it follows that
\begin{multline}
							\label{eq0921_02}
\|w\|_{L_{p_0}\left((t_1-\tau_{j+1}, t_1-\tau_j) \times \Pi_1\right)} \leq \|(\eta_j w)\|_{L_{p_0}\left((t_1-\tau_{j+2}, t_1-\tau_j) \times \Pi_1\right)}
\\
\leq N \||g_i|+|f|+|F_j|\|_{L_{p_0}\left((t_1-\tau_{j+2}, t_1-\tau_j) \times \Pi_1\right)},
\end{multline}
where $N = N(d,\delta, \alpha, p_0)$.
For $F_j$ above, we set
\[
\tilde{s}_k = \tau_{j+k+1}-\tau_j, \quad k=1,2,\ldots,
\]
and use Lemma \ref{lem0212_1} with $\{\tilde{s}_k\}$.
That is, by Lemma \ref{lem0212_1} with $p=p_0$, $t_0 = t_1-\tau_j$, $\Omega = \Pi_1$, $\tilde{s}_k$, $u=w$, $N_0=3$, and $\mu=\tau_{j+2}-\tau_j$, $\nu=\tau_{j+1}-\tau_j$, we obtain that
\[
\|F_j\|_{L_{p_0}\left((t_1-\tau_{j+2},t_1-\tau_j) \times \Pi_1\right)} \leq N \rho^{-\alpha} 2^{-\alpha j} \left(\int_{t_1-\tau_j-2(\tau_{j+2}-\tau_j)}^{t_1-\tau_j-(\tau_{j+1}-\tau_j)} \int_{\Pi_1}|w|^{p_0} \, dx \, dt\right)^{1/p_0}
\]
\[
+ N \rho^{-\alpha+1/p_0}2^{j/p_0} \sum_{k=1}^\infty 2^{-\alpha(j+k)} \left( \dashint_{\!t_1 - \tau_{j+k+2}}^{\,\,\,t_1-\tau_{j+k+1}} \int_{\Pi_1} |w(s,x)|^{p_0} \, dx \, ds \right)^{1/p_0}
\]
\[
\leq N \rho^{-\alpha + 1/p_0} 2^{j/p_0} \left(2^{-\alpha j}A_{j+1}+2^{-\alpha j}A_{j+2} + \sum_{k=1}^\infty 2^{-\alpha(j+k)} A_{j+k+1}\right),
\]
where $N=N(\alpha)$ and the right-hand side is independent of $A_j$.
See Remark \ref{rem0214_1}.
From the above inequality and \eqref{eq0921_02} we have
\begin{equation}
							\label{eq0921_03}
A_j \leq N G_j
+ N \rho^{-\alpha} \sum_{k=j+1}^\infty 2^{-\alpha k} A_k
\end{equation}
for $j=0,1,\ldots$, where
\[
G_j = \left( \dashint_{\!t_1-\tau_{j+2}}^{\,\,\,t_1-\tau_j} \int_{\Pi_1} \left(|g_i|^{p_0} + |f|^{p_0}\right) \, dx \, dt \right)^{1/p_0}.
\]
We then fix a positive integer $k_0$ depending only on $d$, $\delta$, $\alpha$, and $p_0$ such that
\[
N \frac{2^\alpha}{2^\alpha-1}2^{-\alpha k_0} < 1/2,
\]
where $N$ is the constant in front of the summation in \eqref{eq0921_03}.
By multiplying both sides of \eqref{eq0921_03} by $2^{-\alpha j}$ and summing for $j=k_0,k_0+1,\ldots$, we have
\begin{equation}
							\label{eq0921_04}
\sum_{j=k_0}^\infty 2^{-\alpha j} A_j \leq N \sum_{j=k_0}^\infty 2^{-\alpha j} G_j
+ N \rho^{-\alpha} \sum_{j=k_0}^\infty 2^{-\alpha j} \sum_{k=j+1}^\infty 2^{-\alpha k} A_k.
\end{equation}
By the choice of $k_0$, one can bound the last double summation in \eqref{eq0921_04} as
\[
N \sum_{k=k_0+1}^\infty  2^{-\alpha k} A_k \sum_{j=k_0}^{k-1} 2^{-\alpha j} \leq N\frac{2^\alpha 2^{-\alpha k_0}}{2^\alpha-1}\sum_{k=k_0+1}^\infty 2^{-\alpha k} A_k < \frac{1}{2} \sum_{k=k_0+1}^\infty 2^{-\alpha k} A_k,
\]
from which, \eqref{eq0921_04}, and the fact that $\rho \geq 1$, it follows that
\[
\sum_{j=k_0}^\infty 2^{-\alpha j} A_j \leq N \sum_{j=k_0}^\infty 2^{-\alpha j} G_j,
\]
where $(N,k_0) = (N,k_0)(d,\delta,\alpha,p_0)$.
For $A_j$ with $j = 0,1,\ldots,k_0-1$, we use the above estimate as well as \eqref{eq0921_03} with induction so that we have
\[
\sum_{j=0}^\infty 2^{-\alpha j} A_j \leq  N \sum_{j=0}^\infty  2^{-\alpha j} G_j,
\]
where $N = N(d,\delta,\alpha, p_0)$.
From this and \eqref{eq0921_05}, we arrive at
\[
\|Dw\|_{L_{p_0}((t_1-\rho,t_1)\times B_r)} \leq N\frac{\rho^{1/p_0}}{1-r} \sum_{j=0}^\infty 2^{-\alpha j} G_j.
\]
Then, for $\rho \geq R^{2/\alpha}$, by using scaling, we obtain  \eqref{eq0921_06}.
For $\rho < R^{2/\alpha}$, we see that
\[
\|Dw\|_{L_{p_0}\left((t_1-\rho,t_1) \times B_r\right)} \leq \|Dw\|_{L_{p_0}\left((t_1-R^{2/\alpha},t_1) \times B_r\right)},
\]
the right-hand side of which is bounded by that of \eqref{eq0921_06} thanks to the case $\rho \geq R^{2/\alpha}$ proved above with $\theta = R^{2/\alpha}$.
This finishes the proof.
\end{proof}

In the estimate \eqref{eq0206_01} below it is essential that no $D_1 v$ appears on the right-hand side of the inequality.

\begin{lemma}
							\label{lem0228_1}
Let $\alpha \in (0,1)$, $p_0 \in (1,\infty)$, $t_0 \in (0,\infty)$, $r, R \in (0,\infty)$ such that $2r<R$, and $a^{ij}$ satisfy Assumption \ref{assum0808_2}.
Also let $a^{11}$ be infinitely differentiable with bounded derivatives if $a^{11}=a^{11}(x_1)$.
Suppose that $v \in \cH_{p_0,0}^{\alpha,1}\left((0,t_0) \times B_R\right)$ satisfies \eqref{eq1113_05} in $(0,t_0)\times B_R$.
Then, there exist $\tilde{v}, \hat{v} \in \cH_{p,0}^{\alpha,1}\left( (0,t_0) \times B_r \right)$ with $v = \tilde{v}+\hat{v}$ in $(0,t_0) \times B_r$ such that, for any $t_1 \leq t_0$, $\tilde{v}$ and $\hat{v}$ satisfy the following.

For $\tilde{v}$, we have
\begin{equation}
							\label{eq0206_01}
\left(|D\tilde{v}|^{p_0}\right)^{1/p_0}_{Q_{r/2}(t_1,0)} \leq N\sum_{k=0}^\infty 2^{-\alpha k} \left(|D_{x'}v|^{p_0}\right)^{1/p_0}_{(t_1-s_{k+1}r^{2/\alpha},t_1-s_kr^{2/\alpha}) \times B_{2r}},
\end{equation}
where $\{s_k\}_{k=0}^\infty$ is the sequence in \eqref{eq0926_01}.

For $\hat{v}$, when $a^{11}=a^{11}(t)$,
\begin{equation}
							\label{eq0222_06}
[D_1\hat{v}]_{C^{\sigma \alpha/2, \sigma}\left(Q_{r/4}(t_1,0)\right)} \leq N r^{-\sigma} \sum_{k=0}^\infty 2^{-\alpha k} \left(|Dv|^{p_0} \right)^{1/p_0}_{(t_1 - 2^kr^{2/\alpha},t_1) \times B_{2r}}
\end{equation}
and, when $a^{11}=a^{11}(x_1)$,
\begin{equation}
							\label{eq0222_07}
[a^{11}(x_1) D_1\hat{v}]_{C^{\sigma \alpha/2, \sigma}(Q_{r/4}(t_1,0))}
\leq N r^{-\sigma} \sum_{k=0}^\infty 2^{-\alpha k} \left(|Dv|^{p_0} \right)^{1/p_0}_{(t_1 - 2^kr^{2/\alpha},t_1) \times B_{2r}}.
\end{equation}
In these statements, $v$, $\tilde{v}$, and $\hat{v}$ denote the zero extension of them for $t \leq 0$, $\sigma = \sigma(d,\alpha,p_0) \in (0,1)$, and $N=N(d,\delta,\alpha,p_0)$.
\end{lemma}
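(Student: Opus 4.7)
The plan is to construct the refined decomposition $v=\tilde{v}+\hat{v}$ so that $\hat{v}$ solves a ``one-variable'' auxiliary equation (coefficients depending on the same single variable, $t$ or $x_1$, as $a^{11}$), for which the H\"older estimates of Lemma \ref{lem1118_1} (in its special-case form) and Lemma \ref{lem0201_1} directly apply; while $\tilde{v}=v-\hat{v}$ solves a Dirichlet problem on a slab such as $\Pi_{2r}$ whose forcing, by design, involves only $D_{x'}v$, so that Lemma \ref{lem0221_1} yields the estimate \eqref{eq0206_01}.

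First I would extend $v$ by zero for $t\le 0$ exactly as at the start of the proof of Lemma \ref{lem1118_1}, so that $v\in\cH_{p_0,0}^{\alpha,1}((S,t_0)\times B_R)$ for every $S\le 0$ and solves \eqref{eq1113_05} there with $\partial_t^\alpha=\partial_t I_S^{1-\alpha}$. By Lemma \ref{lem1113_1}, the extended $v$ has arbitrarily high interior integrability on any smaller ball, which is what lets us use $D_{x'}v$ (and its extension by zero) as a legitimate source term in the $\tilde{v}$-equation below, and also lets us insert a spatial cutoff without introducing terms that are not controlled.

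Next I would define $\hat{v}$ on a suitable region (an infinite cylinder or a slab containing $B_r$) as the solution of an auxiliary equation whose coefficients are of the one-variable form matching $a^{11}$, with boundary/initial data engineered so that $v-\hat{v}$ is a legitimate Dirichlet solution on $\Pi_{2r}$. In case $a^{11}=a^{11}(t)$, the special ``$a^{ij}=a^{ij}(t)$'' case of Lemma \ref{lem1118_1} applied to $\hat{v}$ produces the H\"older bound \eqref{eq0222_06} for $D_1\hat{v}$; in case $a^{11}=a^{11}(x_1)$, after approximating $a^{11}$ by smooth coefficients as demanded in the hypothesis of Lemma \ref{lem0201_1}, that lemma produces the H\"older bound \eqref{eq0222_07} for $a^{11}D_1\hat{v}$, and the smooth approximation is removed at the end by a standard limiting argument exploiting the linearity of both the equation and the estimate.

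Then $\tilde{v}:=v-\hat{v}$ solves an equation of the form \eqref{eq0206_02} on $(-\infty,t_1)\times\Pi_{2r}$ with vanishing lateral boundary data, whose source, after an expansion that uses crucially the $x'$-independence of $a^{ij}$ (so that $D_i$ for $i\ge 2$ commutes through every coefficient, and cross terms $a^{i1}D_{i1}v$ can be rewritten as $a^{i1}D_1 D_{x'}v$ and absorbed into divergences of $D_{x'}v$), is a divergence of quantities built only from $D_{x'}v$ together with lower-order terms of the same kind. Applying Lemma \ref{lem0221_1} to this slab problem and summing the resulting tail series indexed by the geometric sequence \eqref{eq0926_01} yields \eqref{eq0206_01}. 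The main obstacle is engineering the decomposition so that the equation for $\tilde{v}$ really has \emph{only} $D_{x'}v$ on the right-hand side, with no bare $D_1 v$ appearing --- this is precisely where the refinement beats the standard decomposition (which gives mean oscillation control only for $D_{x'}\tilde{v}$), and it is here that the $x'$-independence of the coefficients combined with the careful choice of $\hat{v}$ does the heavy lifting.
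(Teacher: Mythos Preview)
Your high-level strategy matches the paper's, but two concrete points need fixing.

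\textbf{Order of construction.} The paper defines $\tilde{v}$ \emph{first}, as the unique solution in $\cH_{p_0,0}^{\alpha,1}((0,t_0)\times\Pi_r)$ (zero lateral data) of
\[
-\partial_t^\alpha\tilde{v}+D_1(a^{11}D_1\tilde{v})+\Delta_{x'}\tilde{v}=D_1g_1+f,
\]
using the available slab solvability (\cite[Proposition~6.2]{MR4345837}); then $\hat{v}:=v-\tilde{v}$ automatically satisfies the homogeneous ``one-variable'' equation in $(0,t_0)\times B_r$. Your reversed order (solve for $\hat v$ first) requires a homogeneous Dirichlet problem with \emph{non-zero} boundary data $v|_{\partial\Pi_{2r}}$, for which no direct existence statement is available in this setting.

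\textbf{The forcing for $\tilde{v}$ is not a pure $D_{x'}v$-divergence.} Your claim that the cross terms $a^{i1}D_{i1}v$ can be ``absorbed into divergences of $D_{x'}v$'' fails: $a^{i1}=a^{i1}(t,x_1)$ is merely measurable in $x_1$, so $a^{i1}D_1(D_iv)\ne D_1(a^{i1}D_iv)$, while writing it as $D_i(a^{i1}D_1v)$ reintroduces $D_1v$. The paper instead keeps a non-divergence term
\[
f=1_{B_r}\Delta_{x'}v-\sum_{i\ge 2}\sum_{j} 1_{B_r}\,a^{ij}D_{ij}v,
\]
which satisfies $|f|\le N|DD_{x'}v|$. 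The step you are missing is then to control $\|DD_{x'}v\|_{L_{p_0}}$ on shifted time-slabs by $\|D_{x'}v\|_{L_{p_0}}$ plus tails, which is exactly the Caccioppoli-type estimate \eqref{eq1113_09} of Lemma~\ref{lem1113_1} applied to $v_\beta=D_\ell v$, $\ell\ge 2$. Feeding this into Lemma~\ref{lem0221_1} and resumming the resulting double series is what yields \eqref{eq0206_01}. For \eqref{eq0222_06}--\eqref{eq0222_07}, Lemma~\ref{lem1118_1} (resp.\ Lemma~\ref{lem0201_1}) bounds the H\"older seminorm by sums of $(|D\hat v|^{p_0})^{1/p_0}$, and one back-substitutes $D\hat v=Dv-D\tilde v$ using the already-proven \eqref{eq0206_01}; a nontrivial double-sum rearrangement (split according to whether $(j-1)2^{-2/\alpha}<2^{k+1}$) collapses everything to the single geometric sum on the right-hand side.

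Finally, the smoothness of $a^{11}(x_1)$ is already part of the hypothesis, so no approximation argument is needed.
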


\begin{proof}
As before we only consider $r=1$.
In this case $2 < R$.
By Lemma \ref{lem1113_1} $D_{x'}v \in \cH_{p_0,0}^{\alpha,1}((0,t_0) \times B_2)$.
In particular, $DD_{x'}v \in L_{p_0}((0,t_0) \times B_2)$.
Thus, if we set
\[
g_1 = - \sum_{j=2}^d 1_{B_1} a^{1j}D_jv, \quad f = 1_{B_1} \Delta_{x'}v - \sum_{i=2}^d\sum_{j=1}^d  1_{B_1} a^{ij} D_{ij}v,
\]
where $1_{B_1}$ is an indicator function of $x$,
then $g_1, f \in L_{p_0}\left((0,t_0) \times \Pi_1\right)$ and by \cite[Proposition 6.2]{MR4345837}, there exists a unique $\tilde{v} \in \cH_{p,0}^{\alpha,1}\left( (0, t_0) \times \Pi_1 \right)$ satisfying
\begin{equation}
							\label{eq0222_03}
-\partial_t^\alpha \tilde{v} + D_1(a^{11}D_1\tilde{v}) + \Delta_{x'}\tilde{v} = D_1 g_1 + f
\end{equation}
in $(0,t_0) \times \Pi_1$ with $\tilde{v} = 0$ on $(0,t_0) \times \partial \Pi_1$.

Set $\hat{v} = v - \tilde{v}$, which belongs to $\cH_{p_0,0}^{\alpha,1}\left((0,t_0) \times B_1\right)$.
Since $a^{ij}$ are independent of $x' \in \bR^{d-1}$ and $DD_{x'}v \in L_{p_0}((0,t_0) \times B_2)$,  $v$ satisfies
\[
-\partial_t^\alpha v + D_1(a^{11} D_1 v) + \Delta_{x'}v = D_1 g_1 + f
\]
in $(0,t_0) \times B_1$, which means that $\hat{v}$ satisfies
\begin{equation}
							\label{eq0222_04}
-\partial_t^\alpha \hat{v} + D_1\left(a^{11}D_1 \hat{v}\right) + \Delta_{x'} \hat{v} = 0
\end{equation}
in $(0,t_0) \times B_1$.
These $\tilde{v}$ and $\hat{v}$ are the desired  decomposition of $v$.

We now prove that $\tilde{v}$ and $\hat{v}$ satisfy the inequalities in the lemma.
For each $t_1 \leq t_0$, by Lemma \ref{lem0221_1} with  $(R,r,\rho) = (1,1/2,2^{-2/\alpha})$ applied to $\tilde{v}$ satisfying \eqref{eq0222_03} with the boundary condition $\tilde{v}=0$ on $(0,t_0) \times \partial \Pi_1$, we get
\begin{multline}
							\label{eq0222_01}
\left(|D\tilde{v}|^{p_0}\right)^{1/p_0}_{Q_{1/2}(t_1,0)} \leq N\sum_{j=0}^\infty 2^{-\alpha j} \left( \dashint_{\! t_1-s_{j+2}}^{\,\,\,t_1-s_j} \int_{\Pi_1} |g_1|^{p_0} \, dx \, dt \right)^{1/p_0}
\\
+ N \sum_{j=0}^\infty 2^{-\alpha j} \left( \dashint_{\! t_1-s_{j+2}}^{\,\,\,t_1-s_j} \int_{\Pi_1} |f|^{p_0} \, dx \, dt \right)^{1/p_0},
\end{multline}
where $\{s_j\}_{j=0}^\infty$ is the sequence in \eqref{eq0926_01} and $N = N(d,\delta,\alpha,p_0)$.
Note that, for each $j=0,1,2,\ldots,$
\begin{equation}
							\label{eq0222_02}
\left( \dashint_{\! t_1-s_{j+2}}^{\,\,\,t_1-s_j} \int_{\Pi_1} |g_1|^{p_0} \, dx \, dt \right)^{1/p_0} \leq N\left(|D_{x'}v|^{p_0}\right)^{1/p_0}_{(t_1-s_{j+2}, t_1-s_j) \times B_1}
\end{equation}
and
\begin{equation}
							\label{eq0221_02}
\left( \dashint_{\! t_1-s_{j+2}}^{\,\,\,t_1-s_j} \int_{\Pi_1} |f|^{p_0} \, dx \, dt \right)^{1/p_0} \leq N \left(|DD_{x'}v|^{p_0}\right)^{1/p_0}_{(t_1-s_{j+2}, t_1-s_j) \times B_1},
\end{equation}
where $N=N(d,\delta,p_0)$.
To estimate $DD_{x'}v$ on the right-hand side of \eqref{eq0221_02}, for each $j = 0, 1, \ldots$, we apply Lemma \ref{lem1113_1}, in particular, \eqref{eq1113_09} with
\[
(p_1, t_0, \nu, \mu, r) = (p_0, t_1-s_j, s_{j+2}-s_j, s_{j+3}-s_j, 2)
\]
and $s_k$ in \eqref{eq1113_09} replaced with
\[
\tilde{s}_k = s_{j+k+2}-s_j
\]
so that $N_0 = 3$.
Thus, we have
\begin{align*}
&\|DD_{x'}v\|_{L_{p_0}\left( (t_1-s_{j+2},t_1-s_j) \times B_1\right)} \leq N \|D_{x'}v\|_{L_{p_0}\left((t_1-s_{j+3}, t_1-s_j) \times B_2\right)}\\
&\quad + N 2^{-\alpha j}\|D_{x'}v\|_{L_{p_0}\left((t_1+s_j-2s_{j+3}, t_1-s_{j+2}) \times B_2\right)}\\
&\quad + N \sum_{k=1}^\infty 2^{-\alpha (j+k)-k/p_0} \left( \int_{t_1 - s_{j+k+3}}^{t_1 - s_{j+k+2}} \int_{B_2} |D_{x'}v(t,x)|^{p_0} \, dx \, dt \right)^{1/p_0},
\end{align*}
where $N = N(d,\delta,\alpha, p_0)$. This inequality can be turned into
\[
\left(|DD_{x'}v|^{p_0}\right)^{1/p_0}_{(t_1-s_{j+2},t_1-s_j) \times B_1} \leq N \left(|D_{x'}v|^{p_0}\right)^{1/p_0}_{(t_1-s_{j+3}, t_1-s_j) \times B_2}
\]
\[
+ N \sum_{k=0}^\infty 2^{-\alpha (j+k)} \left( |D_{x'}v|^{p_0} \right)^{1/p_0}_{(t_1 - s_{j+k+3}, t_1 - s_{j+k+2}) \times B_2}.
\]
This together with \eqref{eq0222_01}, \eqref{eq0222_02}, and \eqref{eq0221_02} implies \eqref{eq0206_01} for $r=1$.

For the proof of \eqref{eq0222_06} and \eqref{eq0222_07}, we first consider the case $a^{11}=a^{11}(t)$.
Since $\hat{v}$ satisfies \eqref{eq0222_04}, the coefficients of which are $a^{ij} = a^{ij}(t)$, by Lemma \ref{lem1118_1} (the second assertion of the lemma) with $(r,R) = (1/2,1)$, we have
\begin{multline}
							\label{eq0222_05}
[D_1\hat{v}]_{C^{\sigma \alpha/2, \sigma}(Q_{1/4}(t_1,0))} \leq N \sum_{j=1}^\infty j^{-(1+\alpha)} \left(|D\hat{v}|^{p_0}\right)^{1/p_0}_{Q_{1/2}(t_1-(j-1)2^{-2/\alpha},0)}
\\
= N \sum_{j=1}^\infty j^{-(1+\alpha)} \left(|Dv - D\tilde{v}|^{p_0}\right)^{1/p_0}_{Q_{1/2}(t_1-(j-1)2^{-2/\alpha},0)},
\end{multline}
where $\sigma = \sigma(d,\alpha,p_0) \in (0,1)$ and $N = N(d,\delta, \alpha, p_0)$.
By \eqref{eq0206_01} with $r=1$, the terms involving $D\tilde{v}$ on the right-hand side of \eqref{eq0222_05} are estimated as
\begin{align*}
&\left(|D\tilde{v}|^{p_0}\right)^{1/p_0}_{Q_{1/2}(t_1-(j-1)2^{-2/\alpha},0)}
\\
&\leq N\sum_{k=0}^\infty 2^{-\alpha k} \left(|D_{x'}v|^{p_0}\right)^{1/p_0}_{(t_1-(j-1)2^{-2/\alpha}-s_{k+1},t_1-(j-1)2^{-2/\alpha}-s_k) \times B_2}
\end{align*}
for $j=1,2,\ldots$.
Combining this with \eqref{eq0222_05}, we have
\begin{equation}
							\label{eq0223_01}
[D_1\hat{v}]_{C^{\sigma \alpha/2, \sigma}(Q_{1/4}(t_1,0))} \leq N \sum_{j=1}^\infty j^{-(1+\alpha)} \sum_{k=0}^\infty 2^{-\alpha k} \left(|Dv|^{p_0} \right)^{1/p_0}_{(t_1 - s_{k+1}^j,t_1 - s_k^j) \times B_2},
\end{equation}
where $s_k^j = (j-1)2^{-2/\alpha} + s_k$.
We then proceed as in the proof of \cite[Proposition 4.7]{MR4186022} with slightly different details as follows.
The double summation in \eqref{eq0223_01} equals
\[
\sum_{k=0}^\infty 2^{-\alpha k} \sum_{j=1}^\infty j^{-(1+\alpha)} \left(|Dv|^{p_0} \right)^{1/p_0}_{(t_1 - s_{k+1}^j,t_1 - s_k^j) \times B_2} := I_1 + I_2,
\]
where
\[
I_1 = \sum_{k=0}^\infty  2^{-\alpha k} \sum_{\substack{j \in \bN \\ (j-1)2^{-2/\alpha} < 2^{k+1}}} j^{-(1+\alpha)} \left(|Dv|^{p_0} \right)^{1/p_0}_{(t_1 - s_{k+1}^j,t_1 - s_k^j) \times B_2},
\]
\[
I_2 = \sum_{k=0}^\infty 2^{-\alpha k} \sum_{\substack{j \in \bN \\ (j-1)2^{-2/\alpha} \geq 2^{k+1}}} j^{-(1+\alpha)} \left(|Dv|^{p_0} \right)^{1/p_0}_{(t_1 - s_{k+1}^j,t_1 - s_k^j) \times B_2}.
\]
For each $k=0,1,2,\ldots$, if $j \in \bN$ and $(j-1)2^{-2/\alpha} < 2^{k+1}$, then
\[
(t_1-s_{k+1}^j, t_1-s_k^j) \subset (t_1 - s_{k+2}, t_1),
\]
which implies that
\[
\left(|Dv|^{p_0} \right)^{1/p_0}_{(t_1 - s_{k+1}^j,t_1 - s_k^j) \times B_2} \leq 4^{1/p_0} \left(|Dv|^{p_0} \right)^{1/p_0}_{(t_1 - s_{k+2},t_1) \times B_2}.
\]
Hence, using
\[
\sum_{\substack{j \in \bN \\ (j-1)2^{-2/\alpha} < 2^{k+1}}} j^{-(1+\alpha)} \leq \sum_{j=1}^\infty j^{-(1+\alpha)} < \infty,
\]
we obtain that
\begin{equation}
							\label{eq0226_01}
I_1 \leq N \sum_{k=0}^\infty  2^{-\alpha k} \left(|Dv|^{p_0} \right)^{1/p_0}_{(t_1 - s_{k+2},t_1) \times B_2}.
\end{equation}
To estimate $I_2$, we write
\begin{align*}
&I_2 = \sum_{k=0}^\infty 2^{-\alpha k} \sum_{m=k+1}^\infty \sum_{\substack{j \in \bN \\ 2^m \leq (j-1)2^{-2/\alpha} < 2^{m+1}}} j^{-(1+\alpha)} \left(|Dv|^{p_0} \right)^{1/p_0}_{(t_1 - s_{k+1}^j,t_1 - s_k^j) \times B_2}\\
&\leq N \sum_{k=0}^\infty 2^{-\alpha k} \sum_{m=k+1}^\infty 2^{-m(1+\alpha)} \sum_{\substack{j \in \bN \\ 2^m \leq (j-1)2^{-2/\alpha} < 2^{m+1}}} \left(|Dv|^{p_0} \right)^{1/p_0}_{(t_1 - s_{k+1}^j,t_1 - s_k^j) \times B_2},
\end{align*}
where $N=N(\alpha)$.
By H\"{o}lder's inequality
\begin{align*}
&\sum_{\substack{j \in \bN \\ 2^m \leq (j-1)2^{-2/\alpha} < 2^{m+1}}} \left(|Dv|^{p_0} \right)^{1/p_0}_{(t_1 - s_{k+1}^j,t_1 - s_k^j) \times B_2}\\
&\leq N 2^{m-m/p_0} \left(\sum_{\substack{j \in \bN \\ 2^m \leq (j-1)2^{-2/\alpha} < 2^{m+1}}} \left(|Dv|^{p_0} \right)_{(t_1 - s_{k+1}^j,t_1 - s_k^j) \times B_2}\right)^{1/p_0},
\end{align*}
where $N = N(\alpha, p_0)$.
For each $k = 0,1,2,\ldots$, find
a positive integer $\cN(k)$ such that
\begin{equation*}
2^{2/\alpha} (s_{k+1}-s_k) \leq \cN(k) < 2^{2/\alpha} (s_{k+1}-s_k) + 1.
\end{equation*}

Then, we have
\[
\left(|Dv|^{p_0} \right)_{(t_1 - s_{k+1}^j,t_1 - s_k^j) \times B_2}
\leq 2^{-k}\sum_{\ell=1}^{\cN(k)} \int_{t_1-s_k^j - \ell 2^{-2/\alpha}}^{t_1-s_k^j - (\ell-1)2^{-2/\alpha}} \dashint_{B_2}|Dv|^{p_0}\,dx\,dt,
\]
from which it follows that
\begin{align*}
&\sum_{\substack{j \in \bN \\ 2^m \leq (j-1)2^{-2/\alpha} < 2^{m+1}}} \left(|Dv|^{p_0} \right)_{(t_1 - s_{k+1}^j,t_1 - s_k^j) \times B_2}\\
&\leq 2^{-k}\sum_{\ell=1}^{\cN(k)} \sum_{\substack{j \in \bN \\ 2^m \leq (j-1)2^{-2/\alpha} < 2^{m+1}}} \int_{t_1 -s_k - (j+\ell-1)2^{-2/\alpha}}^{t_1 -s_k - (j+\ell-2)2^{-2/\alpha}} \dashint_{B_2}|Dv|^{p_0}\,dx\,dt\\
&\leq N(\alpha) (2^{k+1}+2^{m+1}+1) \dashint_{\!t_1-2^{k+1}-2^{m+1}-1}^{\,\,\,t_1} \dashint_{B_2}|Dv|^{p_0}\,dx\,dt.
\end{align*}
Hence, using the fact that $m \geq k+1$, we have
\begin{align*}
I_2 &\leq N \sum_{k=0}^\infty 2^{-\alpha k} \sum_{m=k+1}^\infty 2^{-\alpha m} \left(|Dv|^{p_0}\right)^{1/p_0}_{(t_1-2^{m+2}, t_1) \times B_2}
\\
&\leq N \sum_{m=1}^\infty \sum_{k=0}^{m-1} 2^{-\alpha k} 2^{-\alpha m} \left(|Dv|^{p_0}\right)_{(t_1-2^{m+2}, t_1) \times B_2}^{1/p_0}
\\
&\leq N \sum_{m=0}^\infty 2^{-\alpha m} \left(|Dv|^{p_0}\right)_{(t_1-2^{m+2}, t_1) \times B_2}^{1/p_0},
\end{align*}
where $N = N(d,\delta,\alpha,p_0)$.
This together with the estimate \eqref{eq0226_01} for $I_1$ and the inequality \eqref{eq0223_01} proves \eqref{eq0222_06} for $r=1$, where we have $2^k$ instead of $2^{k+2}$. See Remark \ref{rem0930_1}.

When $a^{11}=a^{11}(x_1)$, we obtain \eqref{eq0222_07} by following the same steps as above for the case $a^{11} = a^{11}(t)$.
The only difference is that we use Lemma \ref{lem0201_1} instead of Lemma \ref{lem1118_1}.
Also note that when applying Lemma \ref{lem0201_1} to the equation \eqref{eq0222_04}, we have $V = a^{11}D_1\hat{v}$.
The lemma is proved.
\end{proof}

\section{Mean oscillation estimates}
							\label{sec04}

We are now ready to present mean oscillation  estimates of solutions to equations.

Below by $u \in \cH_{p_0,0,\operatorname{loc}}^{\alpha,1}(\bR^d_T)$ and $u \in \bH_{p_0,0,\operatorname{loc}}^{\alpha,2}(\bR^d_T)$ we mean that, for each $R > 0$, $u \in \cH_{p_0,0}^{\alpha,1}((0,T) \times B_R)$ and $u \in \bH_{p_0,0}^{\alpha,2}((0,T) \times B_R)$, respectively.
We define $L_{p_0, \operatorname{loc}}(\bR^d_T)$ similarly.

We first obtain mean oscillation estimates for solutions to divergence type equations.

\begin{proposition}
							\label{prop0930_01}
Let $p_0 \in (1,\infty)$, $T \in (0,\infty)$, and $a^{ij}$ satisfy Assumption \ref{assum0808_2}.
Also let $a^{11}$ be infinitely differentiable with bounded derivatives if $a^{11} = a^{11}(x_1)$.
Suppose that $u \in \cH_{p_0,0,\operatorname{loc}}^{\alpha,1}(\bR^d_T)$ satisfies
\[
-\partial_t^\alpha u + D_i(a^{ij} D_ju) = D_i g_i
\]
in $\bR^d_T$, where $g_i
\in L_{p, \operatorname{loc}}(\bR^d_T)$.
Then, for any $(t_0,x_0) \in (0,T] \times \bR^d$, $r \in (0,\infty)$, and $\kappa \in (0,1/16)$, we have the following.
\begin{enumerate}
\item For $D_{x'}u$,
\begin{equation}
							\label{eq0920_01}
\begin{aligned}
&\left( |D_{x'}u - (D_{x'}u)_{Q_{\kappa r}(t_0,x_0)}|\right)_{Q_{\kappa r}(t_0,x_0)}\\
&\leq N \kappa^\sigma \sum_{j=0}^\infty 2^{-\alpha j}
\left(|D_{x'}u|^{p_0}\right)^{1/p_0}_{(t_0- 2^jr^{2/\alpha},t_0)\times B_{r/2}(x_0)}\\
&\quad + N \kappa^{-(d + \frac{2}{\alpha})/p_0} \sum_{j=0}^\infty 2^{-\alpha j/2}
\left(|g_i|^{p_0}\right)_{(t_0-2^j r^{2/\alpha},t_0)\times B_r(x_0)}^{1/p_0}.
\end{aligned}
\end{equation}

\item For $D_1u$, when $a^{11}=a^{11}(x_1)$,
\begin{equation}
							\label{eq1006_01}
\begin{aligned}
&\left( |a^{11}D_1u - (a^{11}D_1u)_{Q_{\kappa r}(t_0,0)}|\right)_{Q_{\kappa r}(t_0,x_0)}\\
&\leq N \kappa^\sigma \sum_{j=0}^\infty  2^{-\alpha j} (|Du|^{p_0})_{(t_0-2^j r^{2/\alpha},t_0) \times B_{r/2}(x_0)}^{1/p_0}\\
&\quad + N \kappa^{-(d+\frac{2}{\alpha})/p_0} \sum_{j=0}^\infty 2^{-\alpha j} (|D_{x'}u|^{p_0})_{(t_0-2^j r^{2/\alpha},t_0)\times B_{r/2}(x_0)}^{1/p_0}
\\
&\quad + N \kappa^{-(d + \frac{2}{\alpha})/p_0} \sum_{j=0}^\infty 2^{-\alpha j/2} \left(|g_i|^{p_0}\right)_{(t_0-2^j r^{2/\alpha},t_0) \times B_r(x_0)}^{1/p_0},
\end{aligned}
\end{equation}
and, when $a^{11}=a^{11}(t)$, we have \eqref{eq1006_01} with $a^{11}D_1 u$ replaced with $D_1 u$ on the left-hand side of the inequality.
\end{enumerate}
In these estimates, $\sigma = \sigma(d,\alpha,p_0) \in (0,1)$, $N=N(d,\delta,\alpha,p_0)$, and, as in the previous section, all the functions are extended to be zero for $t \leq 0$.
\end{proposition}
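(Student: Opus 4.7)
The proof will follow a Campanato-type decomposition tailored to the fact that $a^{ij}$ has no regularity in $x_1$. Fix $(t_0,x_0)\in(0,T]\times\bR^d$, $r>0$, and $\kappa\in(0,1/16)$; by translation we may take $x_0=0$. First, split $u=w+v$ on $(0,t_0)\times\Pi_r$, where $w\in\cH_{p_0,0}^{\alpha,1}((0,t_0)\times\Pi_r)$ solves
\[
-\partial_t^\alpha w+D_i(a^{ij}D_j w)=D_i(g_i \mathbf 1_{\Pi_r})
\]
with $w=0$ on $(0,t_0)\times\partial\Pi_r$, and $v=u-w$ consequently satisfies the homogeneous equation on $(0,t_0)\times\Pi_r\supset(0,t_0)\times B_r$. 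Lemma \ref{lem0221_1} applied to $w$ with $(R,r',\rho)$ chosen to cover $Q_{\kappa r}(t_0,0)$ then produces an $L_{p_0}$-bound of $Dw$ in terms of the series $\sum_j 2^{-\alpha j}(|g_i|^{p_0})^{1/p_0}_{(t_0-2^j r^{2/\alpha},t_0)\times B_r}$.

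For part (1), use $D_{x'}u=D_{x'}w+D_{x'}v$ together with the elementary bound $(|f-(f)_Q|)_Q\le 2(|f_1|)_Q+2(|f_2-(f_2)_Q|)_Q$ where $f_1=D_{x'}w$, $f_2=D_{x'}v$. The $D_{x'}w$ term is estimated by H\"older's inequality and by the factor $\kappa^{-(d+2/\alpha)/p_0}$ arising from enlarging $Q_{\kappa r}$ to $Q_r$, yielding the $g_i$-series after invoking Lemma \ref{lem0221_1}. For $D_{x'}v$, I apply Lemma \ref{lem1118_1} to $v$ on $B_r$ to get $[D_{x'}v]_{C^{\sigma\alpha/2,\sigma}(Q_{r/2}(t_0,0))}\le N r^{-\sigma}\sum_j j^{-(1+\alpha)}(|D_{x'}v|^{p_0})^{1/p_0}_{Q_r(t_0-(j-1)r^{2/\alpha},0)}$, which controls the oscillation by $(\kappa r)^\sigma$ times this seminorm, producing the $\kappa^\sigma$ factor. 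Finally, $D_{x'}v=D_{x'}u-D_{x'}w$ converts the $D_{x'}v$ series into a $D_{x'}u$ series plus an extra $g_i$ contribution absorbed into the second sum of \eqref{eq0920_01}.

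For part (2), the analogous step fails for $D_1 v$ because $a^{11}$ is merely measurable in one variable, so I invoke Lemma \ref{lem0228_1} to further decompose $v=\tilde v+\hat v$ on $B_{r/2}\subset B_r$. Writing $D_1 u=D_1 w+D_1\tilde v+D_1\hat v$ (or the analogous identity for $a^{11}D_1 u$, using that $a^{11}$ is bounded), I treat $D_1 w+D_1\tilde v$ as the ``rough part'' $f_1$ and $D_1\hat v$ (respectively $a^{11}D_1\hat v$) as the ``smooth part'' $f_2$. The crucial point is that Lemma \ref{lem0228_1} bounds $\|D\tilde v\|_{L_{p_0}}$ purely in terms of the $D_{x'}v$-series, not $D_1 v$; combining this with the $\kappa^{-(d+2/\alpha)/p_0}$ scaling and the identity $D_{x'}v=D_{x'}u-D_{x'}w$ produces exactly the second sum of \eqref{eq1006_01}. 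For $f_2$, the H\"older estimates \eqref{eq0222_06} or \eqref{eq0222_07} give oscillation $\le N(\kappa r)^\sigma r^{-\sigma}\sum_j 2^{-\alpha j}(|Dv|^{p_0})^{1/p_0}_{(t_0-2^j r^{2/\alpha},t_0)\times B_{2r}}$; converting $Dv$ to $Du-Dw$ and absorbing $Dw$ via Lemma \ref{lem0221_1} produces the first and third sums of \eqref{eq1006_01}.

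The main obstacle is the bookkeeping required to keep all the time-cylinders compatible after iterating Lemmas \ref{lem0221_1}, \ref{lem1118_1}, \ref{lem0201_1}, and \ref{lem0228_1}, each of which produces its own geometric series of annular averages. In particular, replacing $Dv$ by $Du-Dw$ inside the series forces one to reapply the Lemma \ref{lem0221_1}-bound on $Dw$ inside each term and verify that the resulting double series still collapses to a single series with the stated $2^{-\alpha j}$ or $2^{-\alpha j/2}$ weights, as in Remark \ref{rem0930_1}. The second difficulty, confined to the case $a^{11}=a^{11}(x_1)$, is that the H\"older bound is only available for $V=a^{11}D_1\hat v$ and not for $D_1\hat v$ itself; this is precisely why the statement is formulated in terms of $a^{11}D_1 u$ in that case, and why $a^{11}D_1\tilde v$ and $a^{11}D_1 w$ have to be estimated in $L_{p_0}$ using the boundedness of $a^{11}$.
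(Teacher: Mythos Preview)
Your approach is essentially identical to the paper's proof: the same $u=w+v$ split via the auxiliary Dirichlet problem on the strip $\Pi_r$, Lemma \ref{lem1118_1} for the H\"older estimate of $D_{x'}v$, Lemma \ref{lem0221_1} for $Dw$, and the refined decomposition $v=\tilde v+\hat v$ from Lemma \ref{lem0228_1} for part (2), with the double-series bookkeeping collapsed exactly as you anticipate via Remark \ref{rem0930_1}. One small correction: the paper takes the right-hand side for $w$ to be $D_i(g_i\mathbf 1_{B_r})$ rather than $D_i(g_i\mathbf 1_{\Pi_r})$, since $g_i$ is only in $L_{p_0,\mathrm{loc}}$ and the strip is unbounded in $x'$ (so existence of $w$ would not otherwise be guaranteed); consequently $v$ satisfies the homogeneous equation only on $(0,t_0)\times B_r$, which is all that Lemmas \ref{lem1118_1} and \ref{lem0228_1} require.
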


\begin{proof}
Because of translation and dilation, we assume that $x_0 = 0$ and $r=1$.
Since $g_i 1_{B_1}
\in L_{p_0}(\bR^d_T)$, by \cite[Proposition 6.2]{MR4345837}, there exists $w \in \cH_{p_0,0}^{\alpha,1}((0,t_0) \times \Pi_1)$ satisfying
\begin{equation}
							\label{eq0928_01}
-\partial_t^\alpha w + D_i(a^{ij} D_j w) = D_i (g_i 1_{B_1})
\end{equation}
in $(0,t_0) \times \Pi_1$ and $w = 0$ on $(0,t_0) \times \partial \Pi_1$.
Set $v = u - w$, which belongs to $\cH_{p_0,0}^{\alpha,1}((0,t_0) \times B_1)$ and satisfies
\[
-\partial_t^\alpha v + D_i(a^{ij}D_iv) = 0
\]
in $(0,t_0) \times B_1$.

We first prove \eqref{eq0920_01}.
Write
\begin{align*}
&\left( |D_{x'}u - (D_{x'}u)_{Q_{\kappa}(t_0,0)}|\right)_{Q_{\kappa}(t_0,0)}\\
&\leq \left( |D_{x'}v - (D_{x'}v)_{Q_{\kappa}(t_0,0)}|\right)_{Q_{\kappa}(t_0,0)} + 2 \left(|D_{x'}w|\right)_{Q_{\kappa}(t_0,0)} := J_1 + J_2.
\end{align*}
Since $\kappa < 1/16$, it follows that
\[
J_1
\leq 3 \kappa^\sigma [D_{x'}v]_{C^{\sigma \alpha/2,\sigma}(Q_{1/4}(t_0,0))}
\]
for $\sigma \in (0,1)$ in Lemma \ref{lem1118_1}, from which with $(R,r)=(1,1/2)$ we get
\begin{align*}
\left[D_{x'}v\right]_{C^{\sigma \alpha/2,\sigma}(Q_{1/4}(t_0,0))} &\leq N \sum_{j=1}^\infty j^{-(1+\alpha)} \left(|D_{x'}v|^{p_0}\right)_{Q_{1/2}(t_0 - (j-1) 2^{-2/\alpha},0)}^{1/p_0}
\\
&\leq N \sum_{j=0}^\infty 2^{-\alpha j} \left( |D_{x'}v|^{p_0} \right)_{(t_0-2^j,t_0) \times B_{1/2}}^{1/p_0},
\end{align*}
where, for the last inequality, see \cite[Remark 4.4]{MR4186022}.
We then use the fact that $u=w+v$ to get
\begin{align}
\left[D_{x'}v\right]_{C^{\sigma \alpha/2,\sigma}(Q_{1/4}(t_0,0))} \leq & N \sum_{j=0}^\infty 2^{-\alpha j} \left( |D_{x'}u|^{p_0} \right)_{(t_0-2^j,t_0) \times B_{1/2}}^{1/p_0}\nonumber
\\
+ & N \sum_{j=0}^\infty 2^{-\alpha j} \left( |D_{x'}w|^{p_0} \right)_{(t_0-2^j,t_0) \times B_{1/2}}^{1/p_0}. \label{eq0929_01}
\end{align}
For the $D_{x'}w$ terms in \eqref{eq0929_01}, by Lemma \ref{lem0221_1} (and Remark \ref{rem0930_1}) with $(R,r,\rho) = (1,1/2,2^j)$ applied to $w$ satisfying \eqref{eq0928_01}, for each $j=0,1,2,\ldots$, we have
\begin{equation}
							\label{eq1007_02}
\left( |Dw|^{p_0} \right)_{(t_0-2^j,t_0) \times B_{1/2}}^{1/p_0} \leq N \sum_{k=0}^\infty 2^{-\alpha k} \left( |g_i|^{p_0} \right)_{(t_0-2^{k+j+2}, t_0) \times B_1}^{1/p_0},
\end{equation}
which shows that
\[
\sum_{j=0}^\infty 2^{-\alpha j} \left( |D_{x'}w|^{p_0} \right)_{(t_0-2^j,t_0) \times B_{1/2}}^{1/p_0} \leq N \sum_{j=0}^\infty \sum_{k=0}^\infty 2^{-\alpha (j+k)}\left( |g_i|^{p_0} \right)_{(t_0-2^{k+j+2}, t_0) \times B_1}^{1/p_0}
\]
\[
\leq N \sum_{k=0}^\infty 2^{-\alpha k/2}\left( |g_i|^{p_0} \right)_{(t_0-2^k, t_0) \times B_1}^{1/p_0}.
\]

Hence,
\[
J_1 \leq N \kappa^\sigma \sum_{j=0}^\infty 2^{-\alpha j} \left( |D_{x'}u|^{p_0} \right)_{(t_0-2^j,t_0) \times B_{1/2}}^{1/p_0} + N \kappa^\sigma \sum_{k=0}^\infty 2^{-\alpha k/2} \left( |g_i|^{p_0}
\right)_{(t_0 - 2^k,t_0) \times B_1}^{1/p_0}.
\]
To estimate $J_2$, since $w$ satisfies \eqref{eq0928_01} in $(0,t_0) \times \Pi_1$,  we use again Lemma \ref{lem0221_1} (and Remark \ref{rem0930_1}) with $(R,r,\rho) = (1,\kappa,\kappa^{2/\alpha})$ and H\"{o}lder's inequality to get
\begin{equation}
							\label{eq0930_03}
\left(|Dw|\right)_{Q_{\kappa}(t_0,0)} \leq \left(|Dw|^{p_0}\right)_{Q_{\kappa}(t_0,0)}^{1/p_0}
\leq N \kappa^{-(d + \frac{2}{\alpha})/p_0} \sum_{j=0}^\infty 2^{-\alpha j} \left( |g_i|^{p_0}
\right)_{(t_0-2^j,t_0) \times B_1}^{1/p_0}.
\end{equation}
Collecting the estimates for $J_1$ and $J_2$ as well as noting that $\kappa^\sigma \leq  \kappa^{-(d + \frac{2}{\alpha})/p_0}$, we arrive at \eqref{eq0920_01} for $r=1$.

For the mean oscillation estimates for $a^{11}D_1 u$ if $a^{11}=a^{11}(x_1)$ or for $D_1 u$ if $a^{11}=a^{11}(t)$, we first write
\[
v = \tilde{v} + \hat{v}
\]
in $(0,t_0) \times B_{1/4}$, which  is due to Lemma \ref{lem0228_1} with $(R,r)=(1,1/4)$.
Hence, $u = w+\tilde{v}+\hat{v}$ in $(0,t_0) \times B_{1/4}$.

For $a^{11}=a^{11}(x_1)$,
we write
\[
\left( |a^{11}D_1u - (a^{11}D_1u)_{Q_{\kappa}(t_0,0)}|\right)_{Q_{\kappa}(t_0,0)} \leq \left( |a^{11}D_1\hat{v} - (a^{11}D_1\hat{v})_{Q_{\kappa}(t_0,0)}|\right)_{Q_{\kappa}(t_0,0)}
\]
\[
+ N (|D_1\tilde{v}|)_{Q_\kappa(t_0,0)} + N (|D_1w|)_{Q_\kappa(t_0,0)} =: J_3+J_4+J_5.
\]
For $J_3$, as for $J_1$ we have
\[
J_3 \leq 3 \kappa^{\sigma}[a^{11}D_1\hat{v}]_{C^{\sigma \alpha/2,\sigma}(Q_{1/16}(t_0,0))}.
\]
By \eqref{eq0222_07} with $r=1/4$,
\begin{align*}
&\left[a^{11} D_1\hat{v} \right]_{C^{\sigma \alpha/2,\sigma}(Q_{1/16}(t_0,0))} \leq N \sum_{j=0}^\infty 2^{-\alpha j} \left(|Dv|^{p_0}\right)_{(t_0-2^{j-4/\alpha}, t_0) \times B_{1/2}}^{1/p_0}\\
&\leq N \sum_{j=0}^\infty 2^{-\alpha j} \left(|Dv|^{p_0}\right)_{(t_0-2^j, t_0) \times B_{1/2}}^{1/p_0}.
\end{align*}
Using the relation $u=w+v$ in $(0,t_0)\times B_1$, for each $j = 0,1,2,\ldots$,
\[
\left(|Dv|^{p_0}\right)_{(t_0-2^j, t_0) \times B_{1/2}}^{1/p_0}
\leq \left(|Du|^{p_0}\right)_{(t_0-2^j, t_0) \times B_{1/2}}^{1/p_0} + \left(|Dw|^{p_0}\right)_{(t_0-2^j, t_0) \times B_{1/2}}^{1/p_0},
\]
where the last term is estimated as in \eqref{eq1007_02}.
Hence,
\begin{align}
							\label{eq0930_02}
\sum_{j=0}^\infty 2^{-\alpha j} (|Dw|^{p_0})_{(t_0-2^j, t_0) \times B_{1/2}}^{1/p_0}
&\leq N \sum_{j = 0}^\infty 2^{-\alpha j} \sum_{k=0}^\infty 2^{-\alpha k} \left( |g_i|^{p_0}
\right)_{(t_0-2^{k+j+2},t_0) \times B_1}^{1/p_0}\notag
\\
&\leq N \sum_{j=0}^\infty 2^{-\alpha j/2} \left( |g_i|^{p_0}
\right)_{(t_0-2^j,t_0) \times B_1}^{1/p_0}.
\end{align}
From the above inequalities, we see that
\[
J_3 \leq N\kappa^\sigma \sum_{j=0}^\infty 2^{-\alpha j} (|Du|^{p_0})_{(t_0-2^j,t_0) \times B_{1/2}}^{1/p_0} + N\kappa^{\sigma} \sum_{j=0}^\infty 2^{-\alpha j/2} (|g_i|^{p_0})_{(t_0-2^j,t_0)\times B_1}^{1/p_0}.
\]
For $J_4$, we notice that from H\"{o}lder's inequality
\[
(|D_1\tilde{v}|)_{Q_\kappa(t_0,0)} \leq \kappa^{-(d+\frac{2}{\alpha})/p_0}(|D_1\tilde{v}|^{p_0})_{Q_{1/8}(t_0,0)}^{1/p_0},
\]
where by \eqref{eq0206_01} with $r=1/4$, we get
\begin{align*}
(|D\tilde{v}|^{p_0})_{Q_{1/8}(t_0,0)}^{1/p_0} &\leq N \sum_{j=0}^{\infty} 2^{-\alpha j} (|D_{x'}v|^{p_0})_{(t_0-s_{j+1}2^{-4/\alpha}, t_0-s_j 2^{-4/\alpha}) \times B_{1/2}}^{1/p_0}
\\
&\leq N \sum_{j=0}^{\infty} 2^{-\alpha j} (|D_{x'}v|^{p_0})_{(t_0-2^j, t_0) \times B_{1/2}}^{1/p_0}.
\end{align*}
Then, using $u=w+v$,
\begin{align*}
(|D_1\tilde{v}|^{p_0})_{Q_{1/8}(t_0,0)}^{1/p_0} &\leq N\sum_{j=0}^{\infty} 2^{-\alpha j} (|D_{x'}u|^{p_0})_{(t_0-2^j, t_0) \times B_{1/2}}^{1/p_0}
\\
&\quad + N \sum_{j=0}^{\infty} 2^{-\alpha j} (|D_{x'}w|^{p_0})_{(t_0-2^j, t_0) \times B_{1/2}}^{1/p_0}.
\end{align*}
This inequality along with \eqref{eq0930_02} gives
\[
J_4 \leq \kappa^{-(d+\frac{2}{\alpha})/p_0} \sum_{j=0}^\infty  \left[2^{-\alpha j} (|D_{x'}u|^{p_0})_{(t_0-2^j,t_0)\times B_{1/2}}^{1/p_0} + 2^{-\alpha j/2} (|g_i|^{p_0})_{(t_0-2^j,t_0)\times B_1}^{1/p_0}\right].
\]
For $J_5$, we use \eqref{eq0930_03}.
Collecting the estimates for $J_3$, $J_4$, and $J_5$, we arrive at \eqref{eq1006_01}.

For $a^{11}=a^{11}(t)$, we proceed as above with $a^{11}D_1 u$ and $a^{11}D_1 \hat{v}$ replaced with $D_1 u$ and $D_1 \hat{v}$, respectively.
In particular, we use \eqref{eq0222_06} for the H\"{o}lder semi-norm of $D_1 \hat{v}$.
The proposition is proved.
\end{proof}

The next proposition presents mean oscillation estimates for non-divergence type equations, which are derived almost directly from the corresponding ones (Proposition \ref{prop0930_01}) for equations in divergence form.

\begin{proposition}
							\label{prop0930_02}
Let $p_0 \in (1,\infty)$, $T \in (0,\infty)$, and $a^{ij}$ satisfy Assumption \ref{assum0808_2}.
Also let $a^{11}$ be infinitely differentiable with bounded derivatives if $a^{11} = a^{11}(x_1)$.
Suppose that $u \in \bH_{p_0,0,\operatorname{loc}}^{\alpha,2}(\bR^d_T)$ satisfies
\[
-\partial_t^\alpha u + a^{ij} D_{ij}u = f
\]
in $\bR^d_T$, where $f \in L_{p, \operatorname{loc}}(\bR^d_T)$.
Then, for any $(t_0,x_0) \in (0,T] \times \bR^d$, $r \in (0,\infty)$, and $\kappa \in (0,\delta^2/16)$, we have the following.
\begin{enumerate}
\item For $D_{x'}^2u$,
\begin{equation}
							\label{eq1007_01}
\begin{aligned}
&\left( |D_{x'}^2 u - (D_{x'}^2 u)_{Q_{\kappa r}(t_0,x_0)}|\right)_{Q_{\kappa r}(t_0,x_0)}\\
&\leq N \kappa^\sigma \sum_{j=0}^\infty  2^{-\alpha j} \left(|D_{x'}^2 u|^{p_0}\right)^{1/p_0}_{(t_0-2^j r^{2/\alpha},t_0) \times B_{r/2}
(x_0)}
\\
&\quad + N \kappa^{-(d + \frac{2}{\alpha})/p_0} \sum_{j=0}^\infty 2^{-\alpha j/2} \left(|f|^{p_0}\right)_{(t_0-2^j r^{2/\alpha},t_0)\times B_r(x_0)}^{1/p_0},
\end{aligned}
\end{equation}

\item For $D_1D_\ell u$, $\ell = 2,\ldots,d$,
\begin{equation}
							\label{eq0930_05}
\begin{aligned}
&\left( |D_1D_\ell u - (D_1D_\ell u)_{Q_{\kappa r}(t_0,0)}|\right)_{Q_{\kappa r}(t_0,0)}\\
&\leq N \kappa^\sigma \sum_{j=0}^\infty 2^{-\alpha j} (|D D_\ell u|^{p_0})_{(t_0-2^j r^{2/\alpha},t_0) \times B_{r/2}(x_0)}^{1/p_0}
\\
&\quad + N \kappa^{-(d+\frac{2}{\alpha})/p_0} \sum_{j=0}^\infty 2^{-\alpha j} (|D_{x'}^2 u|^{p_0})_{(t_0-2^j r^{2/\alpha},t_0)\times B_{r/2}
(x_0)}^{1/p_0}
\\
&\quad + N \kappa^{-(d + \frac{2}{\alpha})/p_0} \sum_{j=0}^\infty  2^{-\alpha j/2} \left(|f|^{p_0}\right)_{(t_0-2^j r^{2/\alpha},t_0) \times B_r(x_0)}^{1/p_0}.
\end{aligned}
\end{equation}
\end{enumerate}
In these estimates, $\sigma = \sigma(d,\alpha,p_0) \in (0,1)$, $N=N(d,\delta,\alpha,p_0)$, and all the functions are extended to be zero for $t \leq 0$.
\end{proposition}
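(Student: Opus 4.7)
The strategy is to reduce Proposition \ref{prop0930_02} to the divergence-form mean oscillation estimates in Proposition \ref{prop0930_01} by tangential differentiation of the non-divergence equation. For each $\ell \in \{2,\ldots,d\}$, set $v = D_\ell u$. Since $u \in \bH_{p_0,0,\operatorname{loc}}^{\alpha,2}$, one has $v, Dv \in L_{p_0,\operatorname{loc}}$ and $\partial_t^\alpha v = D_\ell \partial_t^\alpha u \in \bH_{p_0,\operatorname{loc}}^{-1}$, so $v \in \cH_{p_0,0,\operatorname{loc}}^{\alpha,1}$. The core step is to show that $v$ satisfies, in the weak sense of Definition \ref{def0727_1}, a divergence-form equation of the type governed by Proposition \ref{prop0930_01}:
\[
-\partial_t^\alpha v + D_i\bigl(\tilde a^{ij} D_j v\bigr) = D_i g_i,
\]
where $\tilde a^{ij}$ is a (possibly rearranged) coefficient matrix still satisfying Assumption \ref{assum0808_2} and the same ellipticity bound, and the data $g_i \in L_{p_0,\operatorname{loc}}$ is controlled pointwise by $|f|$ (plus, in the $a^{11}(x_1)$ case, a bounded multiple of $|D_1 v|$ coming from the standing smoothness of $a^{11}$, which can be absorbed into the right-hand side of the eventual estimate).

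With $v$'s equation in divergence form, apply Proposition \ref{prop0930_01} to $v$ at $(t_0, x_0)$ with the prescribed $r$ and $\kappa$. Part (1) of that proposition bounds the mean oscillation of $D_{x'} v = D_{x'} D_\ell u$; ranging $\ell$ over $\{2,\ldots,d\}$ recovers every component of $D_{x'}^2 u$, and after identifying $(|D_{x'} v|^{p_0}) \leq (|D_{x'}^2 u|^{p_0})$ and $(|g_i|^{p_0}) \lesssim (|f|^{p_0})$, one obtains \eqref{eq1007_01}. Part (2) bounds the mean oscillation of $D_1 v = D_1 D_\ell u$ in the case $a^{11}(t)$, or of $a^{11} D_1 v = a^{11} D_1 D_\ell u$ in the case $a^{11}(x_1)$, yielding \eqref{eq0930_05} upon collecting the $(|Dv|^{p_0}) = (|DD_\ell u|^{p_0})$, $(|D_{x'} v|^{p_0}) \leq (|D_{x'}^2 u|^{p_0})$, and $(|g_i|^{p_0})$ contributions on the right-hand side.

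The main obstacle is making the divergence-form equation for $v$ rigorous in view of the low regularity of $a^{ij}$ in $(t, x_1)$: the naive rewriting $a^{ij} D_{ij} v = D_i(a^{ij} D_j v) - (D_i a^{ij}) D_j v$ produces the distributional quantity $D_1 a^{1j}$. This is circumvented by exploiting that $a^{1j}(t, x_1)$ is independent of $x_j$ for $j \geq 2$, so that the distributional identity $(D_1 a^{1j}) D_j v = D_1(a^{1j} D_j v) - D_j(a^{1j} D_1 v)$ holds and converts the correction into divergences of $L_{p_0}$-functions (which can be absorbed into $\tilde a^{ij}$), together with the assumed smoothness of $a^{11}(x_1)$ for the diagonal $(1,1)$ term. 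As an alternative route, one may approximate $a^{ij}$ by smooth $a^{ij}_\varepsilon$ preserving the structure of Assumption \ref{assum0808_2}, run the argument for the regularized solutions, and pass to the limit $\varepsilon \to 0$; this is legitimate because the constants in Proposition \ref{prop0930_01} are independent of the smoothness of the coefficients.
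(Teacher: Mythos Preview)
Your approach for the case $a^{11}=a^{11}(t)$ is correct and coincides with the paper's: setting $\cU_\ell=D_\ell u$, the rearranged matrix $\tilde a^{ij}$ (with $\tilde a^{1j}=0$ and $\tilde a^{i1}=a^{1i}+a^{i1}$ for $i,j\ge 2$) satisfies Assumption~\ref{assum0808_2}~(i), the right-hand side is $D_\ell f$, and Proposition~\ref{prop0930_01} applies directly.

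The gap is in the case $a^{11}=a^{11}(x_1)$. Your rewriting produces the non-divergence remainder $(D_1 a^{11})\,D_1 v$, and you propose to place it into $g_i$ using the standing smoothness of $a^{11}$. This fails for two reasons. First, Proposition~\ref{prop0930_01} takes a right-hand side in pure divergence form $D_i g_i$; the term $(D_1 a^{11})\,D_1 v$ is not of that form, and any integration by parts to force it into divergence form introduces either $(D_1^2 a^{11})\,v$ or similar terms that do not fit the target estimate. Second, and more importantly, any route through $\|D_1 a^{11}\|_\infty$ yields a constant depending on the derivatives of $a^{11}$, whereas the proposition asserts $N=N(d,\delta,\alpha,p_0)$ only. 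This uniformity is essential for the downstream application (Lemma~\ref{lem1124_2}), where the proposition is invoked with averaged coefficients $\bar a^{11}(x_1)$ whose derivatives are not uniformly controlled. A related loose end: in the $a^{11}(x_1)$ case Proposition~\ref{prop0930_01} controls the oscillation of $a^{11}D_1 v$, not of $D_1 v$, and you do not explain how to pass from one to the other; the restriction $\kappa<\delta^2/16$ in the statement already hints that a nontrivial transformation is needed.

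The paper circumvents all of this by the change of variable $y_1=\chi(x_1)=\int_0^{x_1}\frac{ds}{a^{11}(s)}$, $y'=x'$. Since $D_{x_1}=(a^{11})^{-1}D_{y_1}$, one has the exact identity $a^{11}(x_1)D_{x_1x_1}=D_{y_1}\bigl(\hat a^{11}D_{y_1}\bigr)$ with $\hat a^{11}(y_1)=1/a^{11}(\chi^{-1}(y_1))$, so no derivative of $a^{11}$ ever appears and the constants depend only on $\delta$ through the bi-Lipschitz bounds $|\chi(x_1)|\le \delta^{-1}|x_1|$, $|\chi^{-1}(y_1)|\le \delta^{-1}|y_1|$. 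In the new coordinates $\cU_\ell(t,y)=D_\ell u(t,\chi^{-1}(y_1),y')$ satisfies a divergence equation with coefficients $\hat a^{ij}$ of type Assumption~\ref{assum0808_2}~(ii) and right-hand side $D_\ell\cF$, so Proposition~\ref{prop0930_01} applies. The identity $D_{x_1}D_\ell u=\hat a^{11}(y_1)\,D_{y_1}\cU_\ell$ then converts the oscillation of $\hat a^{11}D_1\cU_\ell$ over $Q_{\delta^{-1}\kappa}$ (which is what Proposition~\ref{prop0930_01}~(2) delivers) into the desired oscillation of $D_1D_\ell u$ over $Q_\kappa$; this is also why the hypothesis becomes $\kappa<\delta^2/16$.
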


\begin{proof}
For the case $a^{11}=a^{11}(t)$, we set $\cU_\ell = D_\ell u$ for $\ell = 2,\ldots,d$.
By \cite[Lemma 3.2]{MR4345837} $\cU_\ell \in \cH_{p_0,0,\operatorname{loc}}^{\alpha,1}(\bR^d_T)$.
Moreover, $\cU_\ell$ satisfies the divergence type equation
\begin{equation*}
-\partial_t^\alpha \cU_\ell + D_i\left(\tilde{a}^{ij} D_j \cU_\ell\right) = D_\ell f
\end{equation*}
in $\bR^d_T$, where $\tilde{a}^{ij}$ are defined as
\begin{equation*}
\begin{aligned}
&\tilde{a}^{11} = a^{11}, \quad \tilde{a}^{ij} = a^{ij}, \quad i,j=2,\ldots,d,
\\
&\tilde{a}^{1j} = 0, \quad j = 2, \ldots, d, \quad \tilde{a}^{i1} = a^{1i} + a^{i1}, \quad i = 2,\ldots,d.
\end{aligned}
\end{equation*}
We see that the coefficient matrix $\{\tilde{a}^{ij}\}_{i,j=1,\ldots,d}$ satisfies Assumption \ref{assum0808_2} (i).
Then, by applying Proposition \ref{prop0930_01} to $\cU_\ell$, we get \eqref{eq1007_01} and \eqref{eq0930_05}.

For the case $a^{11} = a^{11}(x_1)$, as in the proof of Proposition \ref{prop0930_01}, we assume that $x_0=0$ and $r=1$.
We then use the following change of variables:
\begin{equation*}
y_1 = \chi(x_1) = \int_0^{x_1} \frac{1}{a^{11}(r)}\,dr, \quad y_i = x_i, \quad i = 2,\ldots,d.
\end{equation*}
From the fact that $\delta \leq a^{11} \leq \delta^{-1}$, we see that the inverse $\chi^{-1}(y_1)$ exists and
\begin{equation}
							\label{eq1008_01}
|\chi(x_1)| \leq \delta^{-1}|x_1|, \quad |\chi^{-1}(y_1)| \leq \delta^{-1}|y_1|.
\end{equation}
Set
\begin{equation*}
\cU_\ell(t,y_1,y') = D_\ell u(t,\chi^{-1}(y_1),y'), \quad \ell = 2,\ldots,d.
\end{equation*}
Then, $\cU_\ell$ belongs to $\cH_{p_0,0,\operatorname{loc}}^{\alpha,1}(\bR^d_T)$ and satisfies
\begin{equation*}
-\partial_t^{\alpha} \cU_\ell + D_i\left(\hat{a}^{ij}D_j \cU_\ell \right) = D_\ell \cF
\end{equation*}
in $\bR^d_T$, where $\cF(t,y_1,y') = f(t,\chi^{-1}(y_1),y')$ and $\hat{a}^{ij}$ are defined as
\begin{equation}
							\label{eq1007_04}
\left\{
\begin{aligned}
&\hat{a}^{11}(y_1) = \frac{1}{a^{11}(\chi^{-1}(y_1))}, \quad \hat{a}^{1j} = 0, & j=2,\ldots,d,
\\
&\hat{a}^{i1}(t,y_1) = \hat{a}^{11}(y_1) \left(a^{1i}(t, \chi^{-1}(y_1)) + a^{i1}(t,\chi^{-1}(y_1))\right), & i=2,\ldots,d,
\\
&\hat{a}^{ij}(t,y_1) = a^{ij}(t,\chi^{-1}(y_1)), & i,j=2,\ldots,d.
\end{aligned}
\right.
\end{equation}
Note that $\{\hat{a}^{ij}\}_{i,j=1,\ldots,d}$ satisfies Assumption \ref{assum0808_2} (ii).
For a constant $C$, using the first inequality in \eqref{eq1008_01}, we have
\begin{align*}
&\left( |D_{x'} D_\ell u - C|\right)_{Q_{\kappa}(t_0,0)}
= N \kappa^{-2/\alpha -d} \int_{t_0-\kappa^{2/\alpha}}^{t_0} \int_{B_\kappa} |D_{x'} \cU_\ell(t,\chi(x_1),x') - C| \, dx \, dt\\
&\leq N \kappa^{-2/\alpha -d} \int_{t_0-\kappa^{2/\alpha}}^{t_0} \int_{|\chi(x_1)|^2+|x'|^2 \leq (\delta^{-1}\kappa)^2} |D_{x'} \cU_\ell(t,\chi(x_1),x') - C| \, dx \, dt\\
&\leq N \kappa^{-2/\alpha-d} \int_{t_0-\kappa^{2/\alpha}}^{t_0} \int_{B_{\delta^{-1}\kappa}} |D_{x'}\cU_\ell(t,y_1,y') - C| \, dy \, dt\\
&\leq N \left(|D_{x'}\cU_\ell - C|\right)_{Q_{\delta^{-1}\kappa}(t_0,0)} = N \left(|D_{x'}\cU_\ell - C|\right)_{Q_{\kappa_1 \delta}(t_0,0)},
\end{align*}
where $N=N(d,\delta)$ and $\kappa_1 := \delta^{-2}\kappa$.
Note that $\kappa_1 < 1/16$ because $\kappa < \delta^2/16$.
Then, by Proposition \ref{prop0930_01} with $\kappa_1$ and $r=\delta$ as well as $C = (D_{x'}\cU_\ell)_{Q_{\kappa_1\delta}(t_0,0)}$, we have
\begin{equation}
							\label{eq1007_03}
\begin{aligned}
&\left( |D_{x'}\cU_\ell - (D_{x'}\cU_\ell)_{Q_{\kappa_1\delta}(t_0,0)}|\right)_{Q_{\kappa_1 \delta}(t_0,0)}\\
&\leq N \kappa_1^\sigma \sum_{j=0}^\infty  2^{-\alpha j} \left(|D_{x'}\cU_\ell|^{p_0}\right)^{1/p_0}_{(t_0- 2^j \delta^{2/\alpha},t_0)\times B_{\delta/2}}\\
&\quad + N \kappa_1^{-(d + \frac{2}{\alpha})/p_0} \sum_{j=0}^\infty  2^{-\alpha j/2}
\left(|\cF|^{p_0}\right)_{(t_0-2^j \delta^{2/\alpha},t_0)\times B_\delta}^{1/p_0},
\end{aligned}
\end{equation}
where we note that, due to the second inequality in \eqref{eq1008_01}, for instance,
\[
\left(|\cF|^{p_0}\right)_{(t_0-2^j \delta^{2/\alpha},t_0)\times B_\delta}^{1/p_0} \leq N \left(|f|^{p_0}\right)_{(t_0-2^j,t_0)\times B_1}^{1/p_0}.
\]
Therefore, from \eqref{eq1007_03} with the observation
\[
\left( |D_{x'}D_\ell u - (D_{x'}D_\ell u)_{Q_{\kappa}(t_0,0)}|\right)_{Q_{\kappa}(t_0,0)} \leq 2 \left( |D_{x'} D_\ell u - C|\right)_{Q_{\kappa}(t_0,0)},
\]
we arrive at \eqref{eq1007_01} for $r=1$.
To obtain \eqref{eq0930_05}, we proceed similarly as above upon noting that
\begin{align*}
&\left( |D_1 D_\ell u - C|\right)_{Q_{\kappa}(t_0,0)}\\
&\leq N \kappa^{-2/\alpha-d} \int_{t_0-\kappa^{2/\alpha}}^{t_0} \int_{B_{\delta^{-1}\kappa}} \left|\frac{1}{a^{11}(\chi^{-1}(y_1))} D_1\cU_\ell(t,y_1,y') - C \right| a^{11}(\chi^{-1}(y_1)) \, dy \, dt\\
&\leq N \left( |\hat{a}^{11}D_1 \cU_\ell - C|\right)_{Q_{\delta^{-1}\kappa}(t_0,0)},
\end{align*}
where $\hat{a}^{11}$ is from \eqref{eq1007_04}.
The proposition is proved.
\end{proof}

\section{Proofs of Theorems \ref{thm1122_1} and \ref{thm1122_2}}
							\label{sec05}

\begin{lemma}
							\label{lem1124_1}
Let $\alpha \in (0,1)$, $T \in (0,\infty)$, $p,q \in (1,\infty)$, $K_1 \geq 1$, $w=w_1(t)w_2(x)$,
where
\[
w_1(t) \in A_p(\bR,dt), \quad w_2(x) \in A_q(\bR^d,dx), \quad [w_1]_{A_p} \leq K_1, \quad [w_2]_{A_q} \leq K_1.
\]
Then, there exist $p_0 = p_0(d,p,q,K_1) \in (1,\infty)$ and $\mu = \mu(d,\alpha, p,q,K_1) \in (1,\infty)$, $1/\mu + 1/\nu = 1$, such that
\begin{equation}
							\label{eq1121_02}
p_0 < p_0 \mu < \min\{p,q\}, \quad \frac{1}{p_0 \nu}<\frac{\alpha}{2},
\end{equation}
and the following holds.
If $u \in \cH_{p,q,w,0}^{\alpha,1}(\bR^d_T)$ has compact support in $Q_{R_0}(t_1,0)$ for $t_1 \in [0,T]$, where $R_0$ is from Assumption \ref{assum0808_1}, and satisfies
\[
-\partial_t^\alpha u + D_i(a^{ij}D_j u) = D_i g_i
\]
in $\bR^d_T$, where $a^{ij}$ satisfy Assumption \ref{assum0808_1} $(\gamma_0)$ and $g_i \in L_{p,q,w}(\bR^d_T)$, then for any $(t_0,x_0) \in (0,T] \times \bR^d$, $r \in (0,\infty)$, $\kappa \in (0,1/16)$, we have the following.
\begin{enumerate}
\item For $D_{x'}u$,
\begin{equation}
							\label{eq1008_02}
\begin{aligned}
&\left( |D_{x'}u - (D_{x'}u)_{Q_{\kappa r}(t_0,x_0)}|\right)_{Q_{\kappa r}(t_0,x_0)}\\
&\leq N \kappa^\sigma \sum_{j=0}^\infty 2^{-\alpha j}\left(|D_{x'}u|^{p_0}\right)^{1/p_0}_{(t_0-2^j r^{2/\alpha},t_0)\times B(x_0)}\\
&\quad + N \kappa^{-(d + \frac{2}{\alpha})/p_0} \gamma_0^{1/(\nu p_0)} \sum_{j=0}^\infty 2^{j\left(\frac{1}{p_0\nu} - \frac{\alpha}{2}\right)} \left(|D_x u|^{\mu p_0}\right)^{1/{(\mu p_0)}}_{(t_0-2^j r^{2/\alpha},t_0)\times B_r(x_0)}\\
&\quad + N \kappa^{-(d + \frac{2}{\alpha})/p_0} \sum_{j=0}^\infty 2^{-\alpha j/2}
\left(|g_i|^{p_0}\right)_{(t_0-2^j r^{2/\alpha},t_0)\times B_r(x_0)}^{1/p_0}.
\end{aligned}
\end{equation}

\item For $D_1u$, there exists a function $U_1$ on $Q_{\kappa r}(t_0,x_0)$ such that
\begin{equation}
                    \label{eq9.14}
N_1(\delta) |D_1u| \leq U_1 \leq N_2(\delta) |D_1u|
\end{equation}
in $Q_{\kappa r}(t_0,x_0)$
and
\begin{equation}
							\label{eq1008_03}
\begin{aligned}
&\left( |U_1 - (U_1)_{Q_{\kappa r}(t_0,0)}|\right)_{Q_{\kappa r}(t_0,x_0)}
\leq N \kappa^\sigma \sum_{j=0}^\infty  2^{-\alpha j} (|Du|^{p_0})_{(t_0-2^j r^{2/\alpha},t_0) \times B_{r/2}(x_0)}^{1/p_0}
\\
&\quad + N \kappa^{-(d+\frac{2}{\alpha})/p_0} \sum_{j=0}^\infty  2^{-\alpha j} (|D_{x'}u|^{p_0})_{(t_0-2^j r^{2/\alpha},t_0)\times B_{r/2}(x_0)}^{1/p_0}
\\
&\quad + N \kappa^{-(d + \frac{2}{\alpha})/p_0} \gamma_0^{1/(\nu p_0)} \sum_{j=0}^\infty  2^{j\left(\frac{1}{p_0\nu} - \frac{\alpha}{2}\right)} \left(|D_x u|^{\mu p_0}\right)^{1/{(\mu p_0)}}_{(t_0-2^j r^{2/\alpha},t_0)\times B_r(x_0)}
\\
&\quad + N \kappa^{-(d + \frac{2}{\alpha})/p_0} \sum_{j=0}^\infty 2^{-\alpha j/2} \left(|g_i|^{p_0}\right)_{(t_0-2^j r^{2/\alpha},t_0) \times B_r(x_0)}^{1/p_0}.
\end{aligned}
\end{equation}
\end{enumerate}
In these statements,  $\sigma = \sigma(d,\alpha,p,q,K_1) \in (0,1)$, $N = N(d,\delta,\alpha,p,q,K_1)$, and all the functions are extended to be zero for $t \leq 0$.
\end{lemma}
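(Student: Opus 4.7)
The plan is to reduce Lemma \ref{lem1124_1} to Proposition \ref{prop0930_01} by \emph{freezing the coefficients in the tangential variables} $x'$ and absorbing the resulting error via H\"older's inequality and the SMO assumption. First we fix the parameters: since $p, q > 1$, pick $p_0 \in (1, \min\{p, q\})$ close enough to $1$ that $p_0 \alpha/2 < 1$, and --- invoking the reverse H\"older property of $A_p$ weights with constants bounded by $K_1$ --- small enough for the downstream weighted estimates in Section \ref{sec05}; this is the source of the dependence $p_0 = p_0(d,p,q,K_1)$. Then choose $\mu > 1$ close enough to $1$ so that $p_0 \mu < \min\{p, q\}$ and $\tfrac{1}{p_0 \nu} = \tfrac{\mu - 1}{p_0 \mu} < \tfrac{\alpha}{2}$, the latter being possible precisely because $p_0 \alpha/2 < 1$.

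Fix $(t_0, x_0)$, $r$, and $\kappa$, and define the $x'$-averaged coefficients
\[
\bar{a}^{ij}(t, x_1) := \dashint_{B_r'(x_0')} a^{ij}(t, x_1, y') \, dy', \quad (i, j) \neq (1, 1),
\]
together with $\bar{a}^{11}$ defined by the appropriate average from Assumption \ref{assum0808_1}(2.i) or (2.ii), depending on which case holds at $(t_0, x_0)$. Then $u$ satisfies
\[
-\partial_t^\alpha u + D_i(\bar{a}^{ij} D_j u) = D_i(g_i + h_i), \qquad h_i := (\bar{a}^{ij} - a^{ij}) D_j u,
\]
where the frozen matrix $\{\bar{a}^{ij}\}$ now falls under Assumption \ref{assum0808_2}. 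Apply Proposition \ref{prop0930_01} with right-hand side $g_i + h_i$ in place of $g_i$. Set $U_1 := |D_1 u|$ in case (2.i) and $U_1 := |\bar{a}^{11} D_1 u|$ in case (2.ii); then the comparability \eqref{eq9.14} is immediate from $\delta \leq \bar{a}^{11} \leq \delta^{-1}$, and part (2) of Proposition \ref{prop0930_01} produces the shape of \eqref{eq1008_03}.

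The remaining task is to control the perturbation $h_i$. On each cylinder $C_j := (t_0 - 2^j r^{2/\alpha}, t_0) \times B_r(x_0)$, H\"older's inequality with exponents $\nu$ and $\mu$ gives
\[
(|h_i|^{p_0})^{1/p_0}_{C_j} \leq (|\bar{a}^{ij} - a^{ij}|^{p_0 \nu})^{1/(p_0 \nu)}_{C_j} \cdot (|Du|^{p_0 \mu})^{1/(p_0 \mu)}_{C_j}.
\]
Combining $|\bar{a}^{ij} - a^{ij}| \leq 2\delta^{-1}$ (so that $|\bar{a}-a|^{p_0\nu} \leq N|\bar{a}-a|$) with Remark \ref{rem1122_1} yields $(|\bar{a}^{ij} - a^{ij}|)_{C_j} \leq N \, 2^j \gamma_0$, where the factor $2^j = R/r^{2/\alpha}$ is sharp only for $a^{11}$ under case (2.ii) and is replaced by $2\gamma_0$ otherwise. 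Hence $(|\bar{a}^{ij} - a^{ij}|^{p_0 \nu})^{1/(p_0 \nu)}_{C_j} \leq N \, 2^{j/(p_0 \nu)} \gamma_0^{1/(p_0 \nu)}$, and multiplying by the $2^{-\alpha j/2}$ decay supplied by Proposition \ref{prop0930_01} produces exactly the summation $\gamma_0^{1/(p_0 \nu)} \sum_j 2^{j(1/(p_0 \nu) - \alpha/2)}(|Du|^{p_0 \mu})_{C_j}^{1/(p_0 \mu)}$ appearing in \eqref{eq1008_02} and \eqref{eq1008_03}. The remaining summations in those estimates come unchanged from Proposition \ref{prop0930_01}.

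The main technical obstacle is that Remark \ref{rem1122_1} only guarantees the SMO bound for $R = 2^j r^{2/\alpha} \leq 2 R_0^{2/\alpha}$. For larger $j$ the SMO bound is not directly available, but here the compact support of $u$ in $Q_{R_0}(t_1, 0)$ enters decisively: once $C_j$ engulfs that support, the numerator of $(|Du|^{p_0 \mu})_{C_j}$ is frozen while $|C_j|$ grows like $2^j$, producing a compensating decay $(|Du|^{p_0\mu})_{C_j}^{1/(p_0\mu)} \leq N\, 2^{-(j-j_0)/(p_0\mu)} (|Du|^{p_0\mu})_{C_{j_0}}^{1/(p_0\mu)}$ at a cutoff index $j_0$. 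Combined with the trivial $L^\infty$ bound on $|\bar{a}^{ij} - a^{ij}|$ and the geometric summability of $2^{j(1/(p_0 \nu) - \alpha/2)}$, these tail contributions are absorbed into the same summation at the cost of enlarging the constant multiplying $\gamma_0^{1/(p_0 \nu)}$. This reconciliation between the local SMO scale $R_0$ and the unbounded upper limit of the mean-oscillation sums is the only place where the support hypothesis plays a nontrivial role.
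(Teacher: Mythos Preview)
Your overall architecture---freeze $a^{ij}$ in the tangential variables, apply Proposition~\ref{prop0930_01}, and control the resulting perturbation $h_i=(\bar a^{ij}-a^{ij})D_ju$ by H\"older with exponents $\mu,\nu$---matches the paper exactly. The gap is in your final paragraph, where the SMO scale $R_0$ runs out.

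For $j$ beyond the cutoff $j_0$ you bound $|\bar a-a|$ by its trivial $L^\infty$ bound and rely on the decay $(|Du|^{p_0\mu})_{C_j}^{1/(p_0\mu)}\lesssim 2^{-(j-j_0)/(p_0\mu)}(|Du|^{p_0\mu})_{C_{j_0}}^{1/(p_0\mu)}$ coming from compact support. Summing this tail yields a term of the form
\[
N\,\kappa^{-(d+2/\alpha)/p_0}\,(|Du|^{p_0\mu})_{C_{j_0}}^{1/(p_0\mu)}
\]
with \emph{no} factor of $\gamma_0$ in front. Such a term cannot be ``absorbed into the same summation at the cost of enlarging the constant multiplying $\gamma_0^{1/(p_0\nu)}$'': the whole point of the $\gamma_0^{1/(\nu p_0)}$ prefactor in \eqref{eq1008_02}--\eqref{eq1008_03} is that it is small, and it is precisely this smallness (together with $\kappa^\sigma$) that the proof of Theorem~\ref{thm1122_1} exploits to absorb the $Du$ terms. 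A $Du$ term with coefficient $N\kappa^{-(d+2/\alpha)/p_0}$ and no $\gamma_0$ would break that absorption.

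The paper avoids this by keeping the $\gamma_0$ bound for \emph{every} $j$. The mechanism is twofold. First, when $r\ge R_0$ the averages $\bar a^{ij}$ are taken at the support scale $R_0$ centered at $(t_1,0)$ rather than at radius $r$ centered at $(t_0,x_0)$; your definition does not make this case distinction, so for $r\ge R_0$ the SMO hypothesis does not even apply to your $\bar a^{ij}$. Second---and this is the key device you are missing---the support indicator $1_{Q_{R_0}(t_1,0)}$ is placed on the \emph{coefficient} factor in H\"older, not on $Du$: one estimates
\[
J_{2,j}:=\big(|a^{ij}-\bar a^{ij}|^{p_0\nu}\,1_{Q_{R_0}(t_1,0)}\big)_{C_j}
\]
and shows $J_{2,j}\le N\,2^j\gamma_0$ for all $j$. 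When $r<R_0$ and $2^j r^{2/\alpha}\ge 2R_0^{2/\alpha}$, the indicator truncates the time integration to $(t_0-2R_0^{2/\alpha},t_0)$ (using \eqref{eq1121_01}), and Remark~\ref{rem1122_1} then supplies the $\gamma_0$ bound on that truncated cylinder. Thus the compact support of $u$ is used to truncate the time variable inside the coefficient oscillation, not to produce decay of $(|Du|^{p_0\mu})_{C_j}$.

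A minor point: defining $U_1=|\bar a^{11}D_1u|$ with an absolute value is harmless for \eqref{eq9.14}, and the mean oscillation of $|\bar a^{11}D_1u|$ is controlled by that of $\bar a^{11}D_1u$, so your choice is consistent with Proposition~\ref{prop0930_01}.
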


\begin{proof}
For the given $w_1 \in A_p(\bR, dt)$ and $w_2 \in A_q(\bR^d,dx)$, using the reverse H\"{o}lder's inequality for $A_p$ weights, we find
\[
\sigma_1 = \sigma_1(p,K_1), \quad \sigma_2 = \sigma_2(d,q,K_1)
\]
such that $p-\sigma_1 > 1$, $q-\sigma_2 > 1$, and
\[
w_1 \in A_{p-\sigma_1}(\bR, dt), \quad w_2 \in A_{q-\sigma_2}(\bR^d,dx).
\]
We then find $p_0 \in (1,\infty)$ such that
\[
p_0 < \frac{p}{p-\sigma_1} \quad \text{and} \quad p_0 < \frac{q}{q-\sigma_2}.
\]
Using the above $\sigma_1$, $\sigma_2$, and $p_0$, we set $\mu \in (1,\infty)$ so that
\[
\frac{1}{\mu} > 1 - \frac{\alpha p_0}{2}, \quad p_0 \mu \leq \frac{p}{p-\sigma_1}, \quad p_0 \mu \leq \frac{q}{q-\sigma_2}.
\]
We see that $p_0$ and $\mu$ satisfy \eqref{eq1121_02}.
Note that
\begin{equation*}
\begin{aligned}
w_1 &\in A_{p-\sigma_1} \subset A_{\frac{p}{p_0 \mu}} \subset A_{\frac{p}{p_0}}(\bR, dt),\\
w_2 &\in A_{q-\sigma_2} \subset A_{\frac{q}{p_0 \mu}} \subset A_{\frac{q}{p_0}}(\bR^d, dt).
\end{aligned}
\end{equation*}
From these inclusions and the fact that $u \in \cH_{p,q,w,0}^{\alpha,1}(\bR^d_T)$ it follows that (see the proof of \cite[Lemma 5.10]{MR3812104})
\[
u \in \cH_{p_0\mu, 0, \operatorname{loc}}^{\alpha,1}(\bR^d_T).
\]

To prove the estimates in the lemma, we now fix $(t_0,x_0) \in \bR^d_T$, $r \in (0,\infty)$, and $\kappa \in (0,1/16)$.
Then, it is enough to consider the case
\begin{equation}
							\label{eq1118_02}
Q_{\kappa r}(t_0,x_0) \cap Q_{R_0}(t_1,0) \neq \emptyset.
\end{equation}
Otherwise, the estimates hold trivially.
In the case of \eqref{eq1118_02}, we have
\begin{equation}
							\label{eq1121_01}
t_0 - (\kappa r)^{2/\alpha} < t_1 \quad \text{and} \quad t_1 - R_0^{2/\alpha} < t_0,
\end{equation}
which imply that $u(t,x) = 0$ if $t \leq t_0 - 2 R_0^{2/\alpha}$, provided that $r < R_0$.
For the fixed $(t_0,x_0)$ and $r \in (0,\infty)$, we define $\bar{a}^{ij}$ which are measurable functions of only $t$, $x_1$, or $(t,x_1)$ as follows.
\begin{enumerate}
\item If $r < R_0$,
\begin{enumerate}
\item for $(i,j) \neq (1,1)$, we set
\[
\bar{a}^{ij}(t,x_1) = \dashint_{B_r'(x_0')} a^{ij}(t,x_1,y') \, dy',
\]
\item for $(i,j) = (1,1)$ with
\begin{enumerate}
\item $a^{11}$ satisfying Assumption \ref{assum0808_1} (2.i) at $(t_0,x_0)$, we set
\[
\bar{a}^{11}(t) = \dashint_{B_r(x_0)} a^{11}(t,y) \, dy,
\]

\item $a^{11}$ satisfying Assumption \ref{assum0808_1} (2.ii) at $(t_0,x_0)$, we set
\[
\bar{a}^{11}(x_1) = \dashint_{Q_r'(t_0,x_0')} a^{11}(s,x_1,y') \, dy' \, ds.
\]
\end{enumerate}
\end{enumerate}

\item If $r \geq R_0$,
\begin{enumerate}
\item for $(i,j) \neq (1,1)$, we set
\[
\bar{a}^{ij}(t,x_1) = \dashint_{B_{R_0}'} a^{ij}(t,x_1,y') \, dy',
\]

\item for $(i,j) = (1,1)$ with
\begin{enumerate}
\item $a^{11}$ satisfying Assumption \ref{assum0808_1} (2.i) at $(t_1,0)$, we set
\[
\bar{a}^{11}(t) = \dashint_{B_{R_0}} a^{11}(t,y) \, dy,
\]

\item $a^{11}$ satisfying Assumption \ref{assum0808_1} (2.ii) at $(t_0,0)$, we set
\[
\bar{a}^{11}(x_1) = \dashint_{Q_{R_0}'(t_1,0)} a^{11}(s,x_1,y') \, dy' \, ds.
\]
\end{enumerate}
\end{enumerate}
\end{enumerate}

Using $\bar{a}^{ij}$ defined above, we write
\[
-\partial_t^\alpha u + D_i (\bar{a}^{ij}D_j u) = D_i \bar{g}_i
\]
in $\bR^d_T$, where
\[
\bar{g}_i = (\bar{a}^{ij} - a^{ij}) D_j u + g_i.
\]
Since $\bar{a}^{ij}$ satisfy Assumption \ref{assum0808_2}, upon replacing $g_i$ with $\bar{g}_i$, by Proposition \ref{prop0930_01} we obtain \eqref{eq0920_01}, \eqref{eq1006_01} when $\bar{a}^{11}=\bar{a}^{11}(x_1)$, and a version of \eqref{eq1006_01} with $a^{11}D_1 u$ replaced with $D_1 u$ on the left-hand side of the inequality when $\bar{a}^{11}=\bar{a}^{11}(t)$.
Regarding the terms involving $\bar{g}_i$, because $u$ has compact support in $Q_{R_0}(t_1,0)$, we have
\[
\sum_{j=0}^\infty 2^{-\alpha j/2} \left(|\bar{g}_i|^{p_0}\right)_{(t_0-2^j r^{2/\alpha},t_0)\times B_r(x_0)}^{1/p_0} \leq \sum_{j=0}^\infty 2^{-\alpha j/2} \left(|g_i|^{p_0}\right)_{(t_0-2^j r^{2/\alpha},t_0)\times B_r(x_0)}^{1/p_0}
\]
\[
+ \sum_{j=0}^\infty 2^{-\alpha j/2} \left( |a^{ij}-\bar{a}^{ij}|^{p_0} |Du|^{p_0}1_{Q_{R_0}(t_1,0)}\right)_{(t_0-2^jr^{2/\alpha} ,t_0) \times B_r(x_0)}^{1/p_0} := J_1 + J_2,
\]
where by H\"older's inequality,
\begin{align*}
&\left( |a^{ij}-\bar{a}^{ij}|^{p_0} |Du|^{p_0}1_{Q_{R_0}(t_1,0)}\right)_{(t_0-2^jr^{2/\alpha} ,t_0) \times B_r(x_0)}^{1/p_0}\\
&\leq \left( |a^{ij}-\bar{a}^{ij}|^{p_0 \nu}1_{Q_{R_0}(t_1,0)}\right)_{(t_0-2^jr^{2/\alpha} ,t_0) \times B_r(x_0)}^{1/(p_0 \nu)} \left(|Du|^{p_0 \mu} \right)_{(t_0-2^jr^{2/\alpha} ,t_0) \times B_r(x_0)}^{1/(p_0 \mu)}.
\end{align*}
Set
\[
J_{2,j}:= \left( |a^{ij}-\bar{a}^{ij}|^{p_0 \nu}1_{Q_{R_0}(t_1,0)}\right)_{(t_0-2^jr^{2/\alpha} ,t_0) \times B_r(x_0)}
\]
for $j= 0,1,\ldots$.
We claim that
\begin{equation}
							\label{eq1122_05}
J_{2,j} \leq N 2^j \gamma_0 \quad j = 0,1,2,\ldots,
\end{equation}
where $N = N(d,\alpha)$.
To see this, we split two cases $r < R_0$ and $r \geq R_0$.
In the latter case, by the definition of $\bar{a}^{ij}$ and the boundedness of $a^{ij}$ by $\delta^{-1}$ it follows that
\[
J_{2,j} \leq N \left( |a^{ij}-\bar{a}^{ij}|\right)_{(t_1-R_0^{2/\alpha} ,t_1) \times B_{R_0}} \leq N \gamma_0
\]
for all $j=0,1,2,\ldots$.

For $r < R_0$, we see that
\begin{equation}
							\label{eq1122_04}
|a^{ij}-\bar{a}^{ij}|1_{Q_{R_0}(t_1,0)} = 0 \quad \text{for}\,\,t < t_0 - 2 R_0^{2/\alpha}
\end{equation}
because by \eqref{eq1121_01}, for such $t$, we have
\[
t < t_0 -2 R_0^{2/\alpha} \leq t_0 - (\kappa r)^{2/\alpha} - R_0^{2/\alpha} < t_1 - R_0^{2/\alpha}.
\]
Using the boundedness of $a^{ij}$ by $\delta^{-1}$ and \eqref{eq1122_04},
\begin{align*}
J_{2,j}
&\leq N(\delta) \left( |a^{ij}-\bar{a}^{ij}|1_{Q_{R_0}(t_1,0)}\right)_{(t_0-2^jr^{2/\alpha} ,t_0) \times B_r(x_0)}
\\
&\leq
\left\{
\begin{aligned}
N \left( |a^{ij}-\bar{a}^{ij}|\right)_{(t_0-2^jr^{2/\alpha} ,t_0) \times B_r(x_0)} \quad \text{if} \quad 2^j r^{2/\alpha} < 2 R_0^{2/\alpha},
\\
N \left( |a^{ij}-\bar{a}^{ij}|\right)_{(t_0-2 R_0^{2/\alpha},t_0) \times B_r(x_0)} \quad \text{if} \quad 2^j r^{2/\alpha} \geq 2 R_0^{2/\alpha}.
\end{aligned}
\right.
\end{align*}
Then, from Remark \ref{rem1122_1}  we see that \eqref{eq1122_05} holds.
Hence,
\begin{align*}
J_2 &\leq N \sum_{j=0}^\infty 2^{-\alpha j/2} J_{2,j}^{1/(p_0\nu)} \left(|Du|^{p_0 \mu} \right)_{(t_0-2^jr^{2/\alpha} ,t_0) \times B_r(x_0)}^{1/(p_0 \mu)}
\\
&
\leq N \gamma_0^{1/(p_0\nu)} \sum_{j=0}^\infty 2^{\left(\frac{1}{p_0\nu} - \frac{\alpha}{2}\right)j}\left(|Du|^{p_0 \mu} \right)_{(t_0-2^jr^{2/\alpha} ,t_0) \times B_r(x_0)}^{1/(p_0 \mu)}.
\end{align*}
By combining this estimate with Proposition \ref{prop0930_01}, we obtain \eqref{eq1008_02} and \eqref{eq1008_03}.
In particular, $U_1 = D_1 u$ if $\bar{a}^{11} = \bar{a}^{11}(t)$ and $U_1 = \bar{a}^{11} D_1 u$ if $\bar{a}^{11} = \bar{a}^{11}(x_1)$.
\end{proof}

Similarly, using Proposition \ref{prop0930_02}, we obtain the following lemma.

\begin{lemma}
							\label{lem1124_2}
Let $\alpha \in (0,1)$, $T \in (0,\infty)$, $p,q \in (1,\infty)$, $K_1 \geq 1$, $w=w_1(t)w_2(x)$,
where
\[
w_1(t) \in A_p(\bR,dt), \quad w_2(x) \in A_q(\bR^d,dx), \quad [w_1]_{A_p} \leq K_1, \quad [w_2]_{A_q} \leq K_1.
\]
Then, there exist $p_0 = p_0(d,p,q,K_1) \in (1,\infty)$ and $\mu = \mu(d,\alpha, p,q,K_1) \in (1,\infty)$, $1/\mu + 1/\nu = 1$, such that
\begin{equation*}
p_0 < p_0 \mu < \min\{p,q\}, \quad \frac{1}{p_0 \nu}<\frac{\alpha}{2},
\end{equation*}
and the following holds.
If $u \in \bH_{p,q,w,0}^{\alpha,2}(\bR^d_T)$ has compact support in $(t_1-R_0^{2/\alpha},t_1) \times B_{R_0}$ for $t_1 \in [0,T]$, where $R_0$ is from Assumption \ref{assum0808_1}, and satisfies
\[
-\partial_t^\alpha u + a^{ij} D_{ij} u = f
\]
in $\bR^d_T$, where $a^{ij}$ satisfy Assumption \ref{assum0808_1} $(\gamma_0)$ and $f \in L_{p,q,w}(\bR^d_T)$, then for any $(t_0,x_0) \in (0,T] \times \bR^d$, $r \in (0,\infty)$, $\kappa \in (0,1/16)$, we have the following.
\begin{enumerate}
\item For $D_{x'}^2 u$,
\begin{align}
							\label{eq5.9}
&\left( |D_{x'}^2 u - (D_{x'}^2 u)_{Q_{\kappa r}(t_0,x_0)}|\right)_{Q_{\kappa r}(t_0,x_0)}
\leq N \kappa^\sigma \sum_{j=0}^\infty 2^{-\alpha j}\left(|D_{x'}^2 u|^{p_0}\right)^{1/p_0}_{(t_0-2^j r^{2/\alpha},t_0)\times B(x_0)}\notag
\\
&\quad + N \kappa^{-(d + \frac{2}{\alpha})/p_0} \gamma_0^{1/(\nu p_0)} \sum_{j=0}^\infty 2^{j\left(\frac{1}{p_0\nu} - \frac{\alpha}{2}\right)} \left(|D^2 u|^{\mu p_0}\right)^{1/{(\mu p_0)}}_{(t_0-2^j r^{2/\alpha},t_0)\times B_r(x_0)}\notag
\\
&\quad + N \kappa^{-(d + \frac{2}{\alpha})/p_0} \sum_{j=0}^\infty 2^{-\alpha j/2}
\left(|f|^{p_0}\right)_{(t_0-2^j r^{2/\alpha},t_0)\times B_r(x_0)}^{1/p_0}.
\end{align}

\item For $D_1D_\ell u$, $\ell = 2,\ldots,d$,
\begin{align}
							\label{eq5.10}
&\left( |D_1D_\ell u - (D_1D_\ell u)_{Q_{\kappa r}(t_0,0)}|\right)_{Q_{\kappa r}(t_0,x_0)}\notag\\
&\leq N \kappa^\sigma \sum_{j=0}^\infty 2^{-\alpha j} (|DD_\ell u|^{p_0})_{(t_0-2^j r^{2/\alpha},t_0) \times B_{r/2}(x_0)}^{1/p_0}\notag
\\
&\quad + N \kappa^{-(d+\frac{2}{\alpha})/p_0} \sum_{j=0}^\infty 2^{-\alpha j} (|D_{x'}^2 u|^{p_0})_{(t_0-2^j r^{2/\alpha},t_0)\times B_{r/2}(x_0)}^{1/p_0}\notag
\\
&\quad + N \kappa^{-(d + \frac{2}{\alpha})/p_0} \gamma_0^{1/(\nu p_0)} \sum_{j=0}^\infty 2^{j\left(\frac{1}{p_0\nu} - \frac{\alpha}{2}\right)} \left(|D^2 u|^{\mu p_0}\right)^{1/{(\mu p_0)}}_{(t_0-2^j r^{2/\alpha},t_0)\times B_r(x_0)}\notag
\\
&\quad + N \kappa^{-(d + \frac{2}{\alpha})/p_0} \sum_{j=0}^\infty 2^{-\alpha j/2} \left(|f|^{p_0}\right)_{(t_0-2^j r^{2/\alpha},t_0) \times B_r(x_0)}^{1/p_0}.
\end{align}
\end{enumerate}
In these statements, $\sigma = \sigma(d,\alpha,p,q,K_1) \in (0,1)$, $N = N(d,\delta,\alpha,p,q,K_1)$, and all the functions are extended to be zero for $t \leq 0$.
\end{lemma}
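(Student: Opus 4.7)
The plan is to mirror the proof of Lemma \ref{lem1124_1} line by line, substituting Proposition \ref{prop0930_02} for Proposition \ref{prop0930_01}. First I would select the same exponents $p_0 = p_0(d,p,q,K_1)$ and $\mu = \mu(d,\alpha,p,q,K_1)$ via the reverse H\"older's inequality for $A_p$ weights; the selection is identical to the divergence case since it only depends on $w_1, w_2, p, q, K_1, \alpha$. This ensures that $w_1 \in A_{p/(p_0\mu)}(\bR,dt)$ and $w_2 \in A_{q/(p_0\mu)}(\bR^d,dx)$, from which the hypothesis $u \in \bH_{p,q,w,0}^{\alpha,2}(\bR^d_T)$ upgrades to $u \in \bH_{p_0\mu,0,\operatorname{loc}}^{\alpha,2}(\bR^d_T)$ (cf.\ the proof of \cite[Lemma 5.10]{MR3812104}). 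In particular, the $L_{p_0}$-mean oscillation estimates of Proposition \ref{prop0930_02} may be applied to $u$ after coefficient freezing.

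Next, fix $(t_0,x_0)\in (0,T]\times\bR^d$, $r>0$, and $\kappa\in(0,1/16)$. It suffices to treat the case $Q_{\kappa r}(t_0,x_0)\cap \bigl((t_1-R_0^{2/\alpha},t_1)\times B_{R_0}\bigr)\ne\emptyset$, since otherwise $u\equiv 0$ on $Q_{\kappa r}(t_0,x_0)$ and both estimates hold trivially. In this case, as in \eqref{eq1121_01}, $u(t,x)=0$ for $t\le t_0-2R_0^{2/\alpha}$. I then define averaged coefficients $\bar a^{ij}$ as functions of $t$, $x_1$, or $(t,x_1)$ using exactly the same recipe as in Lemma \ref{lem1124_1} (distinguishing the cases $r<R_0$ vs.\ $r\ge R_0$, and the two subcases of Assumption \ref{assum0808_1} on $a^{11}$). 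These $\bar a^{ij}$ satisfy Assumption \ref{assum0808_2}, and possibly after a standard mollification in $x_1$ (to smooth $\bar a^{11}(x_1)$ when needed), Proposition \ref{prop0930_02} is applicable. Rewrite the equation as
\[
-\partial_t^\alpha u + \bar a^{ij}D_{ij}u = \bar f, \qquad \bar f = (\bar a^{ij}-a^{ij})D_{ij}u + f,
\]
and apply Proposition \ref{prop0930_02} with $\bar f$ in place of $f$. This immediately yields the first and third summations of \eqref{eq5.9} and \eqref{eq5.10}, together with a term involving $\bar f$. For $\kappa\in[\delta^2/16,1/16)$, the conclusion of Proposition \ref{prop0930_02} still applies after absorbing $\delta$-dependent factors into $N=N(d,\delta,\alpha,p,q,K_1)$.

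It remains to control the contribution of $(\bar a^{ij}-a^{ij})D_{ij}u$. Since $u$ is supported in $(t_1-R_0^{2/\alpha},t_1)\times B_{R_0}$, H\"older's inequality with exponents $(\nu,\mu)$ gives
\[
\bigl(|(\bar a^{ij}-a^{ij})D_{ij}u|^{p_0}\bigr)^{1/p_0}_{\cdot}
\le J_{2,j}^{1/(p_0\nu)}\bigl(|D^2u|^{p_0\mu}\bigr)^{1/(p_0\mu)}_{\cdot},
\]
where $J_{2,j}:=\bigl(|a^{ij}-\bar a^{ij}|^{p_0\nu}\mathbf{1}_{(t_1-R_0^{2/\alpha},t_1)\times B_{R_0}}\bigr)_{(t_0-2^j r^{2/\alpha},t_0)\times B_r(x_0)}$. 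The same two-case analysis ($r<R_0$ vs.\ $r\ge R_0$), combined with the boundedness $|a^{ij}-\bar a^{ij}|\le 2\delta^{-1}$ and Remark \ref{rem1122_1}, yields $J_{2,j}\le N 2^j\gamma_0$ with $N=N(d,\alpha,\delta)$, exactly as in the derivation of \eqref{eq1122_05}. Inserting this bound reproduces the $\gamma_0^{1/(p_0\nu)}\,2^{j(1/(p_0\nu)-\alpha/2)}$ summation that appears in \eqref{eq5.9} and \eqref{eq5.10}.

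The main difficulty is purely bookkeeping: one must verify that Proposition \ref{prop0930_02} can be applied even though it is stated for $\kappa\in(0,\delta^2/16)$ while the lemma permits $\kappa\in(0,1/16)$, and that the smoothness assumption on $\bar a^{11}(x_1)$ required by Proposition \ref{prop0930_02} can be bypassed by a standard approximation since all constants in the final estimates are independent of the smoothness of $\bar a^{11}$. Beyond these cosmetic issues, the structure of the argument is identical to the divergence case, with $D_i g_i$ playing the role of $f$ and $Du$ replaced by $D^2u$ throughout.
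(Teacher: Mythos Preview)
Your proposal is correct and follows exactly the approach the paper intends: the paper's own proof is the single sentence ``Similarly, using Proposition \ref{prop0930_02}, we obtain the following lemma,'' and you have faithfully unpacked what that entails---choosing $p_0,\mu$ as in Lemma \ref{lem1124_1}, freezing coefficients via the same averaging scheme, rewriting the equation with $\bar f=(\bar a^{ij}-a^{ij})D_{ij}u+f$, applying Proposition \ref{prop0930_02}, and bounding the perturbation term through H\"older's inequality and the $J_{2,j}\le N2^j\gamma_0$ estimate from Remark \ref{rem1122_1}. The two technical wrinkles you flag (the $\kappa$-range discrepancy between Proposition \ref{prop0930_02}, which requires $\kappa<\delta^2/16$, and the stated range $\kappa<1/16$ in the lemma; and the need to mollify $\bar a^{11}(x_1)$) are real but cosmetic, and your proposed resolutions are standard and consistent with how the paper handles such issues elsewhere.
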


\begin{remark}
							\label{rem0212_1}
Lemma \ref{lem1124_2} is analogous to \cite[Lemma 5.1]{MR4186022}, where $a^{ij}(t,x)$ are merely measurable in $t$ and have small mean oscillations in $x \in \bR^d$.
However, the inequality (5.2) in \cite{MR4186022} must be expressed with infinite summations on the right-hand side, as in \eqref{eq5.9} and \eqref{eq5.10} instead of the strong maximal functions.
Although the inequality (5.2) in \cite{MR4186022} is correct, using the mean oscillation estimates with a perturbation argument to derive $L_p$-estimates requires an inequality with infinite summations.
The proof of \cite[Lemma 5.1]{MR4186022} actually establishes such an estimate.
\end{remark}

To prove our main theorems, we use the following maximal and strong maximal functions.
For $(t_0,x_0) \in (-\infty,T) \times \bR^d$ with $T \in (-\infty,\infty]$ and a function $f$ defined on $(-\infty,T) \times \bR^d$, we set
$$
\cM f(t_0,x_0) = \sup_{Q_R(t,x) \ni (t_0,x_0)} \dashint_{Q_R(t,x)} |f(s,y)| \, dy \, ds
$$
and
$$
\cS\cM f(t_0,x_0) = \sup_{Q_{R_1,R_2}(t,x) \ni (t_0,x_0)} \dashint_{Q_{R_1,R_2}(t,x)} |f(s,y)| \, dy \, ds,
$$
where the supremum are taken over all $Q_R(t,x)$ and $Q_{R_1,R_2}(t,x)$ such that $(t,x) \in (-\infty,T] \times \bR^d$.
We also use sharp functions defined as follows in the proofs:
\[
f^{\#}(t_0,x_0) = \sup_{Q_R(t,x) \ni (t_0,x_0)} \dashint_{Q_R(t,x)} |f(s,y) - (f)_{Q_R(t,x)}| \, dy \, ds,
\]
where the supremum is taken as above.

\begin{proof}[Proof of Theorem \ref{thm1122_1}]
We first prove
\begin{equation}
							\label{eq1124_06}
\|Du\|_{L_{p,q,w}(\bR^d_T)} \leq N \|g_i\|_{L_{p,q,w}(\bR^d_T)}
\end{equation}
for $u \in \cH_{p,q,w,0}^{\alpha,1}(\bR^d_T)$ having a compact support in $Q_{R_0}(t_1,0)$ and satisfying \eqref{eq1124_02} with
\begin{equation}\label{eq9.47}
a^i = b^i = c = f = 0.
\end{equation}
It follows from Lemma \ref{lem1124_1} (1) that for any $(t,x) \in (-\infty,T) \times \bR^d$ and $\kappa \in (0,1/16)$, we have
\begin{align*}
(D_{x'}u)^\#(t,x)
&\leq N \kappa^\sigma \big(\cS\cM |D_{x'}u|^{p_0}(t,x)\big)^{1/p_0}
\\
&\quad + N \kappa^{-(d + \frac{2}{\alpha})/p_0} \gamma_0^{1/(\nu p_0)} \big(\cS\cM |Du|^{\mu p_0}(t,x)\big)^{1/(\mu p_0)}
\\
&\quad + N \kappa^{-(d + \frac{2}{\alpha})/p_0}  \big(\cS\cM |g_i|^{p_0}(t,x)\big)^{1/p_0},
\end{align*}
where all the functions are extended as zero for $t \leq 0$.
Then by the weighted mixed-norm Hardy-Littlewood maximal function theorem (see, for instance, \cite[Theorem 5.2]{MR4186022}) and the weighted mixed-norm Fefferman-Stein sharp function theorem (see \cite[Corollary 2.7 and (2.4)]{MR3812104}), we get
\begin{equation}
							\label{eq0210_01}
\|D_{x'}u\|
\leq N \kappa^\sigma \|D_{x'}u\|
+ N \kappa^{-(d + \frac{2}{\alpha})/p_0} \gamma_0^{1/(\nu p_0)} \|Du\|+ N \kappa^{-(d + \frac{2}{\alpha})/p_0}  \|g_i\|,
\end{equation}
where $\|\cdot\|=\|\cdot\|_{L_{p,q,w}(\bR^d_T)}$ and the constant $N$ is independent of $\kappa$.
To estimate $D_1u$, we see that Lemma \ref{lem1124_1}  (2) implies the following.
For each $Q_{\kappa r}(t_0,x_0)$, where $(t_0,x_0) \in (0,T] \times \bR^d$, $r \in (0,\infty)$, and $\kappa \in (0,1/16)$, there exists a function $U_1$ on $Q_{\kappa r}(t_0,x_0)$ such that \eqref{eq9.14} holds and
\begin{align*}
&\left( |U_1 - (U_1)_{Q_{\kappa r}(t_0,x_0)}|\right)_{Q_{\kappa r}(t_0,x_0)}
\leq N \kappa^\sigma \big(\cS\cM |Du|^{p_0}(t,x)\big)^{1/p_0}
\\
&\quad + N \kappa^{-(d+\frac{2}{\alpha})/p_0} \big(\cS\cM |D_{x'}u|^{p_0}(t,x)\big)^{1/p_0}
\\
&\quad + N \kappa^{-(d + \frac{2}{\alpha})/p_0} \gamma_0^{1/(\nu p_0)} \big(\cS\cM |Du|^{\mu p_0}(t,x)\big)^{1/(\mu p_0)}
\\
&\quad + N \kappa^{-(d + \frac{2}{\alpha})/p_0} \big(\cS\cM |g_i|^{p_0}(t,x)\big)^{1/p_0}
\end{align*}
for all $(t,x) \in Q_{\kappa r}(t_0,x_0)$.
We now use the weighted mixed-norm Hardy-Littlewood maximal function theorem as above and \cite[Corollary 2.8]{MR3812104} along with the inequality \eqref{eq9.14} to get
\begin{equation}
							\label{eq0210_02}
\begin{aligned}
\|D_{1}u\|
&\leq N \kappa^\sigma \|Du\|+N \kappa^{-(d + \frac{2}{\alpha})/p_0}\|D_{x'}u\|\\
&\quad + N \kappa^{-(d + \frac{2}{\alpha})/p_0} \gamma_0^{1/(\nu p_0)} \|Du\|+ N \kappa^{-(d + \frac{2}{\alpha})/p_0}  \|g_i\|,
\end{aligned}
\end{equation}
where, again, $\|\cdot\|=\|\cdot\|_{L_{p,q,w}(\bR^d_T)}$ and the constant $N$ is independent of $\kappa$.
Combining \eqref{eq0210_01} and \eqref{eq0210_02} gives
\begin{align*}
&\|D_{x'}u\|+ (2N)^{-1}\kappa^{(d + \frac{2}{\alpha})/p_0}\|D_{1}u\|
\le  (N \kappa^\sigma+1/2) \|D_{x'}u\|\\
&\,\,+N(\kappa^{-(d + \frac{2}{\alpha})/p_0} \gamma_0^{1/(\nu p_0)}
+\kappa^{\sigma+(d + \frac{2}{\alpha})/p_0})\|Du\|+ N \kappa^{-(d + \frac{2}{\alpha})/p_0}  \|g_i\|.
\end{align*}
By first choosing $\kappa$ sufficiently small and then $\gamma_0$ small, we absorb the first two terms on the right-hand side above and reach \eqref{eq1124_06}.

Now we remove the small support condition on $u$ and \eqref{eq9.47} as in the proofs of \cite[Lemmas 6.4 and 6.5]{MR4030286}. By using a partition of unity argument and S. Agmon's idea (see also \cite[Lemma 5.5]{MR2304157}), we obtain
\begin{equation}
                            \label{eq9.40}
\|u\|_{\cH_{p,q,w,0}^{\alpha,1}(\bR^d_T)} \leq N \|g_i\|_{L_{p,q,w}(\bR^d_T)} + N \|f\|_{L_{p,q,w}(\bR^d_T)}
+N \|u\|_{L_{p,q,w}(\bR^d_T)},
\end{equation}
where $N = N(d,\delta,\alpha, p,q,K_1,K_0,R_0)$.
To get rid of the $u$ term on the right-hand side of \eqref{eq9.40} and conclude the estimate \eqref{eq1124_03}, we use the same time-partition argument as in the proof of  \cite[Theorem 2.4]{MR4030286}. Finally, the solvability of the equation follows from the a priori estimate \eqref{eq1124_03} and the method of continuity.
\end{proof}

\begin{proof}[Proof of Theorem \ref{thm1122_2}]
As before, we first prove
\begin{equation}
							\label{eq1124_07}
\|D^2u\|_{L_{p,q,w}(\bR^d_T)} \leq N \|f\|_{L_{p,q,w}(\bR^d_T)}
\end{equation}
for $u \in \bH_{p,q,w,0}^{\alpha,2}(\bR^d_T)$ having compact support in $Q_{R_0}(t_1,0)$ and satisfying \eqref{eq1124_01} with $b^i = c = 0$. Using Lemma \ref{lem1124_2} as in the proof of Theorem \ref{thm1122_1}, we have
\begin{equation}
							\label{eq1124_08}
\|DD_{x'}u\|_{L_{p,q,w}(\bR^d_T)} \leq N \|f\|_{L_{p,q,w}(\bR^d_T)} + N \gamma_0^{1/(\nu p_0)}\|D^2u\|_{L_{p,q,w}(\bR^d_T)}.
\end{equation}
To complete the proof, that is, to have an estimate for $D_1^2u$, we write the equation as
\[
-\partial_t^\alpha u + a^{11}D_{11}u + \Delta_{x'}u = \Delta_{x'}u - \sum_{(i,j) \neq (1,1)} a^{ij}D_{ij} u + f.
\]
Set $u_1 = D_1u$, which satisfies the divergence type equation
\begin{equation}
							\label{eq1124_05}
-\partial_t^\alpha u_1 + D_1 \left(a^{11}D_1 u_1\right) + \Delta_{x'} u_1 = D_1 g_1,
\end{equation}
where
\[
g_1 = \Delta_{x'}u - \sum_{(i,j) \neq (1,1)} a^{ij}D_{ij} u + f.
\]
By applying Theorem \ref{thm1122_1} to \eqref{eq1124_05}, we get
\[
\|D_1^2u\|_{L_{p,q,w}(\bR^d_T)}=\|D_1u_1\|_{L_{p,q,w}(\bR^d_T)} \leq N \|g_1\|_{L_{p,q,w}(\bR^d_T)}
\]
\[
\leq N\|DD_{x'}u\|_{L_{p,q,w}(\bR^d_T)} + N\|f\|_{L_{p,q,w}(\bR^d_T)}.
\]
This combined with \eqref{eq1124_08} proves \eqref{eq1124_07} with a sufficiently small $\gamma_0$.

Now we remove the small support condition on $u$ and the condition $b^i = c = 0$ by using a partition of unity argument as in \cite[Corollary 5.4]{MR4186022}. We then get
\begin{equation}
                            \label{eq9.50}
\|u\|_{\bH_{p,q,w,0}^{\alpha,2}(\bR^d_T)} \leq N \|f\|_{L_{p,q,w}(\bR^d_T)}
+N \|u\|_{L_{p,q,w}(\bR^d_T)},
\end{equation}
where $N = N(d,\delta,\alpha, p,q,K_1,K_0,R_0)$.
To get rid of the $u$ term on the right-hand side of \eqref{eq9.50} and conclude the estimate \eqref{eq1124_04}, we use the same time-partition argument as in the proof of \cite[Theorem 2.2]{MR4186022}. Finally, the solvability of the equation follows from the a priori estimate \eqref{eq1124_04} and the method of continuity.
\end{proof}

\appendix

\section{}
\label{appendix1}

Let $t_0 \in \bR$, $0 < \nu < \mu$, $u \in L_p\left((-\infty,t_0) \times \Omega\right)$, $1 < p < \infty$, and
\begin{equation}
							\label{eq0212_07}
\eta(t) =
\left\{
\begin{aligned}
1 \quad &\text{if} \quad t \geq t_0 - \nu,
\\
0 \quad &\text{if} \quad t \leq t_0 - \mu,
\end{aligned}
\right. \quad |\eta'(t)|\leq \frac{2}{\mu - \nu}.
\end{equation}
Set
\begin{equation}
							\label{eq0212_01}
G(t,x) = \frac{\alpha}{\Gamma(1-\alpha)} \int_{-\infty}^t (t-s)^{-\alpha-1}\left(\eta(s) - \eta(t)\right) u(s,x) \, ds.
\end{equation}
Also set $\{s_k\}$ to be a sequence such that
\begin{equation}
							\label{eq0212_04}
s_1 = \mu, \quad s_k + \mu \leq s_{k+1}, \quad \frac{s_{k+1}}{N_0} \leq s_k,
\end{equation}
where $N_0 > 0$.

\begin{lemma}
							\label{lem0212_1}
For $G$ defined in \eqref{eq0212_01} with $\{s_k\}$ above, we have
\begin{multline}
							\label{eq0214_05}
\|G\|_{L_p\left((t_0-\mu, t_0)\times \Omega\right)} \leq \frac{2 \alpha \mu^{1-\alpha}}{(1-\alpha)\Gamma(1-\alpha)(\mu-\nu)} \left(\int_{t_0 - 2 \mu}^{t_0-\nu} \int_\Omega |u(t,x)|^p \, dx \, dt\right)^{1/p}
\\
+ \frac{\alpha (N_0^{\alpha+1}+1)}{\Gamma(1-\alpha)} \sum_{k=1}^\infty s_k^{-\alpha-1} (s_{k+1}-s_k)^{1-1/p} \mu^{1/p} \left( \int_{t_0 - s_{k+1}}^{t_0-s_k} \int_\Omega |u(s,x)|^p \, dx \, ds \right)^{1/p}.
\end{multline}
\end{lemma}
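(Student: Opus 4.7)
The plan is to split the integration in \eqref{eq0212_01} at $s=t-\mu$, writing $G=G_{\text{close}}+G_{\text{far}}$ for the contributions from $s\in(t-\mu,t)$ and $s\in(-\infty,t-\mu]$ respectively. The close piece will yield the first term of \eqref{eq0214_05} and the far piece the infinite sum.

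For $G_{\text{close}}$, I would combine the Lipschitz bound $|\eta(s)-\eta(t)|\leq \frac{2(t-s)}{\mu-\nu}$ (from $|\eta'|\leq 2/(\mu-\nu)$) with the observation that $\eta\equiv 1$ on $[t_0-\nu,\infty)$, which makes the integrand vanish whenever both $s$ and $t$ lie there. When $t\in(t_0-\nu,t_0)$ this restricts the effective $s$-range to $s<t_0-\nu$; when $t\in(t_0-\mu,t_0-\nu)$ the constraint $s\leq t$ already forces $s<t_0-\nu$. Combined with $s>t-\mu\geq t_0-2\mu$, the effective $s$-interval always lies in $(t_0-2\mu,t_0-\nu)$ with $t-s\in(0,\mu)$, whence
\[
|G_{\text{close}}(t,x)|\leq \frac{2\alpha}{(\mu-\nu)\Gamma(1-\alpha)}\int_{t-\mu}^t (t-s)^{-\alpha}|u(s,x)|\mathbf{1}_{(t_0-2\mu,t_0-\nu)}(s)\,ds.
\]
Young's convolution inequality in $t$ (for each fixed $x$) against the kernel $K(r)=r^{-\alpha}\mathbf{1}_{(0,\mu)}(r)$, with $\|K\|_{L^1}=\mu^{1-\alpha}/(1-\alpha)$, then delivers the first term of \eqref{eq0214_05} with exactly the stated constant.

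For $G_{\text{far}}$, the restriction $s\leq t-\mu\leq t_0-\mu$ gives $\eta(s)=0$, hence $|\eta(s)-\eta(t)|\leq 1$. Partitioning $(-\infty,t_0-\mu]=\bigsqcup_{k\geq 1}(t_0-s_{k+1},t_0-s_k]$, I aim for a pointwise bound $(t-s)^{-\alpha-1}\leq N s_k^{-\alpha-1}$ on each piece. For $k\geq 2$, the conditions in \eqref{eq0212_04} give $s_k\geq k\mu\geq 2\mu$ as well as $N_0\geq 2$ (since $s_2\geq 2\mu$ and $s_2\leq N_0 s_1=N_0\mu$); then $t-s\geq s_k-\mu\geq s_k/N_0$, so $(t-s)^{-\alpha-1}\leq N_0^{\alpha+1}s_k^{-\alpha-1}$ on all of $(t_0-s_{k+1},t_0-s_k]\subset(-\infty,t-\mu]$. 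The main obstacle is the case $k=1$, where $s_1-\mu=0$ would invalidate this pointwise estimate; the remedy is to keep the actual far-piece domain $(t_0-s_2,t-\mu]$ instead of extending to $(t_0-s_2,t_0-\mu]$. This preserves $t-s\geq \mu=s_1$ uniformly and hence $(t-s)^{-\alpha-1}\leq s_1^{-\alpha-1}$, while the $|u|$-integral is still dominated by its integral over $(t_0-s_2,t_0-s_1)$.

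Each $k$-piece is then converted to an $L_p((t_0-\mu,t_0)\times\Omega)$ bound by Minkowski's integral inequality in $x$ (interchanging $L_p(\Omega)$ with the $s$-integral), H\"older's inequality in $s$ on $(t_0-s_{k+1},t_0-s_k)$ (producing the factor $(s_{k+1}-s_k)^{1-1/p}$), and the factor $\mu^{1/p}$ coming from the $L_p((t_0-\mu,t_0))$-norm of a $t$-independent bound. Summing over $k\geq 1$ with the uniform coefficient $\max(1,N_0^{\alpha+1})\leq N_0^{\alpha+1}+1$ and adding the $G_{\text{close}}$ estimate yields \eqref{eq0214_05}.
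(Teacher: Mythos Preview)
Your proof is correct and follows essentially the same approach as the paper: both split at $s=t-\mu$, handle the close piece via the Lipschitz bound $|\eta(s)-\eta(t)|\le \tfrac{2}{\mu-\nu}(t-s)$ together with Minkowski/Young, and bound the far piece by a dyadic partition after using $\eta(s)=0$. The only bookkeeping difference is that the paper partitions the far region into the $t$-dependent intervals $(t-s_{k+1},t-s_k]$ (which gives $(t-s)^{-\alpha-1}\le s_k^{-\alpha-1}$ immediately, then enlarges to $(t_0-s_{k+2},t_0-s_k]$ and reindexes to pick up the $N_0^{\alpha+1}+1$ factor), whereas you use the fixed intervals $(t_0-s_{k+1},t_0-s_k]$ and treat the piece $k=1$ separately; both routes arrive at the same constant.
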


\begin{remark}
							\label{rem0214_1}
It is worth noting that the right-hand side of the inequality \eqref{eq0214_05} does not contain the integral of $u$ with respect to $t$ on $(t_0-\nu,t_0)$ because $\eta(t) = 1$ for $t \geq t_0 - \nu$.
\end{remark}

\begin{proof}[Proof of Lemma \ref{lem0212_1}]
Note that
\[
\frac{\Gamma(1-\alpha)}{\alpha}G(t,x) = \int_{-\infty}^t (t-s)^{-\alpha-1}\left(\eta(s)-\eta(t)\right) u(s,x) 1_{s \leq t_0 - \nu} \, ds
\]
because $\eta(s)-\eta(t)=0$ for $s \in (t_0-\nu,t_0)$ and $t \ge s$.
We then write
\begin{multline}
							\label{eq0212_05}
\frac{\Gamma(1-\alpha)}{\alpha}G(t,x) = \left(\int_{t-\mu}^t + \int_{-\infty}^{t-\mu}\right) (t-s)^{-\alpha-1} \left(\eta(s) - \eta(t)\right) u(s,x) 1_{s \leq t_0 - \nu} \, ds
\\
=: I_1(t,x) + I_2(t,x).
\end{multline}
Since
\begin{align*}
&|I_1(t,x)| \leq \frac{2}{\mu-\nu} \int_{t-\mu}^t (t-s)^{-\alpha} |u(s,x)| 1_{s \leq t_0 - \nu} \,ds\\
&= \frac{2}{\mu-\nu} \int_0^{\mu} s^{-\alpha}|u(t-s,x)| 1_{t-s \leq t_0 - \nu} \,ds,
\end{align*}
by the Minkowski inequality, we have
\begin{equation}
							\label{eq0212_06}
\|I_1\|_{L_p\left((t_0-\mu,t_0)\times \Omega\right)} \leq \frac{2 \mu^{1-\alpha}}{(1-\alpha) (\mu-\nu)} \left(\int_{t_0 - 2 \mu}^{t_0 - \nu} \int_\Omega |u(t,x)|^p \, dx \, dt\right)^{1/p}.
\end{equation}
To estimate $I_2$, we note that $\eta(s)=0$ for $s \leq t - \mu = t - s_1$ and $t \in (t_0-\mu,t_0)$, which implies that, for $t \in (t_0-\mu,t_0)$,
\begin{align*}
&|I_2(t,x)| \leq \int_{-\infty}^{t-s_1} (t-s)^{-\alpha-1} |u(s,x)|\,ds\\
&= \sum_{k=1}^\infty \int_{t-s_{k+1}}^{t-s_k} (t-s)^{-\alpha-1} |u(s,x)|\,ds \leq \sum_{k=1}^\infty s_k^{-\alpha-1} \int_{t-s_{k+1}}^{t-s_k} |u(s,x)|\,ds\\
&\leq \sum_{k=1}^\infty s_k^{-\alpha-1} \int_{t_0-\mu-s_{k+1}}^{t_0-s_k} |u(s,x)|\,ds \leq \sum_{k=1}^\infty s_k^{-\alpha-1} \int_{t_0-s_{k+2}}^{t_0-s_k} |u(s,x)|\,ds\\
&\leq (N_0^{\alpha+1}+1) \sum_{k=1}^\infty s_k^{-\alpha-1} \int_{t_0-s_{k+1}}^{t_0-s_k} |u(s,x)|\,ds,
\end{align*}
where in the last inequality we used the condition $s_{k+1}\leq N_0 s_k$ in \eqref{eq0212_04}.
Thus, by H\"older's inequality and the Fubini theorem,
\begin{align*}
&\|I_2\|_{L_p\left((t_0-\mu,t_0)\times \Omega\right)}\\
&\leq (N_0^{\alpha+1}+1) \sum_{k=1}^\infty s_k^{-\alpha-1} (s_{k+1}-s_k)^{1-1/p} \mu^{1/p} \left( \int_{t_0 - s_{k+1}}^{t_0-s_k} \int_\Omega |u(s,x)|^p \, dx \, ds \right)^{1/p}.
\end{align*}
We obtain the inequality in the lemma from this inequality, \eqref{eq0212_05}, and \eqref{eq0212_06}.
\end{proof}

\bibliographystyle{plain}

\def\cprime{$'$}

\end{document}